\newcommand{\beq}{\begin{equation}}
\newcommand{\eeq}{\end{equation}}
\newcommand{\bdism}{\begin{displaymath}}
\newcommand{\edism}{\end{displaymath}}
\newcommand{\setR}{\mathbb R}
\newcommand{\setC}{\mathbb C}
\newtheorem{main}{Theorem}
\newtheorem{others}{Theorem}
\newtheorem{theorem}{Theorem}[section]
\newtheorem{proposition}[theorem]{Proposition}
\newtheorem{corollary}[theorem]{Corollary}
\newtheorem{lemma}[theorem]{Lemma}
\newtheorem{definition}{Definition}
\author[Di Cerbo]{Luca Fabrizio Di Cerbo}
\title{Seiberg-Witten equations on surfaces of logarithmic general type}
\begin{document}
\maketitle

\begin{abstract}
We study the Seiberg-Witten equations on surfaces of logarithmic
general type. First, we show how to construct irreducible solutions
of the Seiberg-Witten equations for any metric which is
``asymptotic'' to a Poincar\'e type metric at infinity. Then we
compute a lower bound for the $L^{2}$-norm of scalar curvature on
these spaces and give non-existence results for Einstein metrics on
blow-ups.
\end{abstract}

\tableofcontents

\section{Introduction}

This paper is concerned with \emph{Seiberg-Witten equations} on
non-compact complex $4$-manifolds. More precisely, we are interested
in complex manifolds obtained from a smooth projective surface
$\overline{M}$ by removing a smooth, reduced, not necessarily
connected, divisor $\Sigma$. Recall that in this case a reduced
divisor is simply a collection of complex submanifolds or otherwise
said smooth holomorphic curves. We denote by $M$ the complex
manifold obtained from $\overline{M}$ by removing $\Sigma$.

The main problem with Seiberg-Witten theory on non-compact manifold
is the lack of a satisfactory existence theory. For a readable
account of what is known in the compact case one may refer to
\cite{Morgan1}. Following a beautiful approach due to Biquard
\cite{Biquard}, we solve the SW equations on $M$ by working on the
compactification $\overline{M}$. More precisely, we produce an
\emph{irreducible} solution of the unperturbed SW equations on $M$
as limit of solutions of the perturbed SW equations on
$\overline{M}$. From the metric point of view, starting with $(M,g)$
where $g$ is assumed to be asymptotic to a Poincar\'e type metric at
infinity, one has to construct a sequence $(\overline{M},g_{j})$ of
metric compactifications that approximate $(M,g)$ as $j$ diverges.
The irreducible solution of the SW equations on $(M,g)$ is then
constructed by a bootstrap argument with the solutions of the SW
equations on $(\overline{M},g_{j})$ with suitably constructed
perturbations.

Here is an outline of the paper. Section \ref{compactification}
describes explicitly the metric compactifications $(\overline{M},
g_{j})$. These metrics are completely analogous to the one used by
Rollin and Biquard in \cite{Rollin} and \cite{Biquard}. Furthermore,
few results concerning the Riemannian geometry of the spaces
$(\overline{M}, g_{j})$ are given.

In Section \ref{l2lebrun} we review some classical facts about the
$L^{2}$-cohomology of complete manifolds. Moreover, we recall a
fundamental result of Zucker \cite{Zucker} concerning the
$L^{2}$-cohomology in the Poincar\'e metric. Finally, we formulate
the $L^{2}$-analogue of LeBrun's scalar curvature estimate
\cite{LeBrun2}.

Sections \ref{analisi1} and \ref{analisi2} contain the uniform
Poincar\'e inequalities on functions and $1$-forms needed for the
main analytical argument. Moreover the convergence, as $j$ goes to
infinity, of the harmonic forms on $(\overline{M}, g_{j})$ is
studied in detail.

In Section \ref{bootstrap} the bootstrap argument is worked out. The
existence result so obtained is summarized in Theorem
\ref{convergence}.

In Section \ref{application}, Theorem \ref{convergence} is applied
to derive several geometrical consequences. First, we give a lower
bound for the Riemannian functional $\int s_{g}^{2}d\mu_{g}$ on $M$,
where by $s$ we denote the scalar curvature. Second, an obstruction
to the existence of Einstein metrics on blow-ups of $M$ is given.
These results are summarized in Theorem \ref{minima} and Theorem
\ref{ostruzione}. These theorems are the finite volume
generalization of some well-known results of LeBrun for closed four
manifolds, see for example \cite{LeBrun5}.

\section{Logarithmic Kodaira dimension}\label{logarithmic}

In this section, we review some of the complex geometry needed in
this paper. In algebraic geometry \cite{Iitaka}, the object of study
is usually not an open complex manifold $M$ but rather a \emph{pair}
$(\overline{M}, \Sigma)$ consisting of an algebraic variety
$\overline{M}$, and a reduced simple normal crossing divisor
$\Sigma$. The obvious requirement is that $M$ is biholomorphic to
$\overline{M}\backslash\Sigma$. The variety $\overline{M}$ is called
the \emph{smooth completion} or simply the \emph{compactification}
of $M$. The divisor $\Sigma$ is referred as the \emph{boundary} of
the smooth completion.

In this paper, unless otherwise stated, $\overline{M}$ will be a
smooth, projective, surface and $\Sigma$ a reduced, smooth, divisor
without rational components. The general case including rational
components and simple normal crossing boundary divisors will be
treated elsewhere.

Motivated by the well-known properties of the minimal model in
complex dimension two \cite{Van de Ven}, we introduce a notion of
minimality for a pair $(\overline{M}, \Sigma)$. For more details see
\cite{Iitaka}.

\begin{definition}
Let $(\overline{M},\Sigma)$ be as above. The pair is called minimal
if $\overline{M}$ does not contain an exceptional curve $E$ of the
first kind such that $E\cdot\Sigma\leq 1$.
\end{definition}

Recall that an exceptional curve of the first kind is simply a
rational curve $E$ with self-intersection minus one, see
\cite{Griffiths} and \cite{Van de Ven}.

Given a pair $(\overline{M}, \Sigma)$, consider the
\emph{logarithmic canonical bundle}
$\mathcal{L}=K_{\overline{M}}+\Sigma$. Given any integer $m$, define
the \emph{logarithmic plurigenera} as
$\overline{P}_{m}=\textrm{dim}H^{0}(\overline{M},\mathcal{O}(m\mathcal{L}))$.
If $\overline{P}_{m}>0$, we define the $m$-th \emph{logarithmic
canonical map} $\Phi_{m\mathcal{L}}$ of the pair
$(\overline{M},\Sigma)$ by
\begin{align}\notag
\Phi_{m\mathcal{L}}(x)=[s_{1}(x), ...,s_{N}(x)],
\end{align}
where $s_{1}, ...,s_{N}$ is a basis for the vector space
$H^{0}(\overline{M},\mathcal{O}(m\mathcal{L}))$. Here the point $x$
has to chosen not in the \emph{base locus} $B_{m}$ of the linear
series $|m\mathcal{L}|$. Concretely, $B_{m}$ is simply the set of
points where all the sections of $m\mathcal{L}$ vanish. At this
point one introduces the notion of \emph{logarithmic Kodaira
dimension} exactly as in the closed smooth case. More precisely, the
logarithmic Kodaira dimension is defined to be the maximal dimension
of the image of the logarithmic canonical maps. Conventionally, we
assign dimension $-\infty$ to pairs for which the logarithmic
plurigera are identically zero. We denote this numerical invariant
by $\overline{k}(M)$. A pair $(\overline{M}, \Sigma)$ is said to be
of \emph{logarithmic general type} if it has maximal logarithmic
Kodaira dimension.

Next, we have to recall the concept of \emph{numerical
effectiveness} for line bundles on surfaces. For an extensive
treatment of this circle of ideas in complex algebraic geometry we
refer to the book \cite{Lazarsfeld}.

\begin{definition}
Let $\overline{M}$ be a smooth projective surface. A line bundle $L$
on $\overline{M}$ is said to be numerical effective, or simply nef,
if
\begin{align}\notag
\int_{D}c_{1}(L)\geq 0
\end{align}
for any irreducible curve $D$ in $\overline{M}$.
\end{definition}

We are now ready to state and prove the following result:

\begin{lemma}\label{numerical}
Let $(\overline{M},\Sigma)$ be a minimal pair such that $\Sigma$ is
a reduced, smooth divisor without rational curves. If
$\overline{k}(M)\geq 0$, then $\mathcal{L}$ is numerically
effective.
\end{lemma}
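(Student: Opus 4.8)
The plan is to argue by contradiction, using the fact that $\overline{k}(M)\geq 0$ forces $\mathcal{L}$ to be $\mathbb{Q}$-effective, together with the adjunction formula and the minimality hypothesis. First I would record that $\overline{k}(M)\geq 0$ means some logarithmic plurigenus is positive, i.e. $H^{0}(\overline{M},\mathcal{O}(m\mathcal{L}))\neq 0$ for some $m>0$; hence there is an effective divisor $E$ with $E\sim m\mathcal{L}$. Suppose, towards a contradiction, that $\mathcal{L}$ is not nef, so there is an irreducible curve $D$ with $\mathcal{L}\cdot D=\int_{D}c_{1}(\mathcal{L})<0$. Then $E\cdot D=m(\mathcal{L}\cdot D)<0$, which is only possible if $D$ is a component of $E$; writing $E=aD+E'$ with $a\geq 1$ and $E'\geq 0$ not containing $D$, the inequality $aD^{2}\leq E\cdot D<0$ yields $D^{2}<0$.

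Next I would split into two cases according to whether $D$ is a component of $\Sigma$. Since $\Sigma$ is smooth, its irreducible components are smooth and pairwise disjoint, so if $D$ is a component of $\Sigma$ then $\Sigma\cdot D=D^{2}$, and adjunction gives $\mathcal{L}\cdot D=(K_{\overline{M}}+\Sigma)\cdot D=K_{\overline{M}}\cdot D+D^{2}=2g(D)-2$. As $\Sigma$ has no rational components, $g(D)\geq 1$ and $\mathcal{L}\cdot D\geq 0$, contradicting our assumption. Hence $D$ is not a component of $\Sigma$, and therefore $\Sigma\cdot D\geq 0$ since distinct irreducible curves meet nonnegatively.

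In this remaining case I would feed the adjunction formula $\mathcal{L}\cdot D = 2p_{a}(D)-2-D^{2}+\Sigma\cdot D$, with $p_{a}(D)\geq 0$ the arithmetic genus, into the inequality $\mathcal{L}\cdot D<0$. Using $p_{a}(D)\geq 0$ and $\Sigma\cdot D\geq 0$ this gives $D^{2}>2p_{a}(D)-2+\Sigma\cdot D\geq -2$, so combined with $D^{2}<0$ we get $D^{2}=-1$; substituting back forces $2p_{a}(D)+\Sigma\cdot D<1$, whence $p_{a}(D)=0$ and $\Sigma\cdot D=0$. Thus $D$ is a smooth rational curve with $D^{2}=-1$ and $D\cdot\Sigma\leq 1$, i.e. an exceptional curve of the first kind meeting $\Sigma$ in at most one point. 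This contradicts the minimality of the pair $(\overline{M},\Sigma)$, and the contradiction shows that $\mathcal{L}\cdot D\geq 0$ for every irreducible $D$, i.e. $\mathcal{L}$ is nef.

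I would expect the main obstacle to be the bookkeeping in the adjunction numerology: making the estimate on $D^{2}$ tight enough to pin down $D^{2}=-1$, $p_{a}(D)=0$ and $D\cdot\Sigma\leq 1$ simultaneously, so that the resulting curve is exactly the kind excluded by minimality. The one genuinely geometric input beyond formal intersection theory is the smoothness of $\Sigma$, which guarantees disjoint components and thereby the clean identity $\Sigma\cdot D=D^{2}$ used to dispose of the case $D\subset\Sigma$.
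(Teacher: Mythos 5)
Your proof is correct and follows essentially the same route as the paper's: use $\overline{k}(M)\geq 0$ to produce an effective divisor in $|m\mathcal{L}|$, deduce that a curve $D$ with $\mathcal{L}\cdot D<0$ has $D^{2}<0$, and then use adjunction (together with $\Sigma\cdot D\geq 0$ for $D\not\subset\Sigma$, and the no-rational-components hypothesis for $D\subset\Sigma$) to pin down a $(-1)$-rational curve with $D\cdot\Sigma=0$, contradicting minimality. Your write-up is just a more explicit bookkeeping of the same adjunction numerology that the paper states tersely.
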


\begin{proof}
Let $E$ be an irreducible components of the boundary divisor
$\Sigma$. Since $\Sigma$ does not contain rational components, by
the adjunction formula we conclude that $\mathcal{L}\cdot E\geq 0$.
Thus, it remains to check that $\mathcal{L}\cdot E\geq 0$ for any
irreducible curve $E$ which is not a component of $\Sigma$. Let us
proceed by contradiction. Suppose $\mathcal{L}\cdot E<0$. Since
$\overline{k}(M)\geq 0$, there exists $m$ such that $m\mathcal{L}$
is effective. We then conclude that $E$ must be a component of
$m\mathcal{L}$ with negative self-intersection. On the other hand
\begin{align}\notag
K_{\overline{M}}\cdot E+\Sigma\cdot E<0 \quad \Longrightarrow \quad
K_{\overline{M}}\cdot E<0.
\end{align}
It then follows that $E\simeq\setC P^{1}$ and $E^{2}=-1$. As a
result we must have $\Sigma\cdot E=0$. This contradicts the
minimality of the pair.
\end{proof}

We can now concentrate on the case when the pair $(\overline{M},
\Sigma)$ is of logarithmic general type.

\begin{proposition}\label{exceptional}
Let $(\overline{M},\Sigma)$ be a minimal pair such that $\Sigma$ is
a reduced, smooth divisor without rational curves. If
$\overline{k}(M)=2$, then $\mathcal{L}^{2}>0$.
\end{proposition}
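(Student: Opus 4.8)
The plan is to exploit the nef-ness of $\mathcal{L}$, which is already supplied by Lemma \ref{numerical} since the hypothesis $\overline{k}(M)=2$ gives in particular $\overline{k}(M)\geq 0$. Because a nef line bundle on a surface automatically satisfies $\mathcal{L}^{2}\geq 0$, the entire content of the proposition is to promote this to a strict inequality. I would argue by contradiction, assuming $\mathcal{L}^{2}=0$, and derive a contradiction from the fact that the logarithmic Kodaira dimension is by definition the Iitaka dimension of $\mathcal{L}$, so that $\overline{k}(M)=2$ means the pluricanonical map $\Phi_{m\mathcal{L}}$ has a two-dimensional image for some $m$.

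First I would fix an integer $m$ with $\overline{P}_{m}>0$ for which $\Phi_{m\mathcal{L}}$ has two-dimensional image, and decompose $m\mathcal{L}=M_{m}+F_{m}$ into its moving part $M_{m}$ and its fixed part $F_{m}\geq 0$. The step I expect to be the crux is translating the birational statement ``the image is a surface'' into the numerical statement $M_{m}^{2}>0$: after resolving the (isolated) base points of $|M_{m}|$ by a morphism $\pi$, the pulled-back moving part becomes base-point-free and defines a generically finite map onto a surface, whence its self-intersection is positive, and the projection formula gives $M_{m}^{2}\geq(\pi^{\ast}M_{m}-E)^{2}>0$ where $E\geq 0$ is the base locus. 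Granting this, the remaining inequalities are formal: $M_{m}$ has no fixed components so $M_{m}\cdot F_{m}\geq 0$, and $\mathcal{L}$ nef with $M_{m},F_{m}$ effective gives $\mathcal{L}\cdot M_{m}\geq 0$ and $\mathcal{L}\cdot F_{m}\geq 0$. Then
\begin{align}\notag
m\,(\mathcal{L}\cdot M_{m})=(M_{m}+F_{m})\cdot M_{m}=M_{m}^{2}+F_{m}\cdot M_{m}\geq M_{m}^{2}>0,
\end{align}
so $\mathcal{L}\cdot M_{m}>0$, and finally
\begin{align}\notag
m\,\mathcal{L}^{2}=\mathcal{L}\cdot(M_{m}+F_{m})=\mathcal{L}\cdot M_{m}+\mathcal{L}\cdot F_{m}\geq \mathcal{L}\cdot M_{m}>0,
\end{align}
contradicting $\mathcal{L}^{2}=0$.

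A more conceptual alternative, which avoids manipulating the moving part, is to invoke asymptotic Riemann--Roch together with the nef-ness of $\mathcal{L}$. On a surface $\chi(\mathcal{O}(m\mathcal{L}))=\tfrac{1}{2}\mathcal{L}^{2}m^{2}+O(m)$, and for a nef bundle the higher cohomology grows sub-quadratically, so that $\overline{P}_{m}=h^{0}(\overline{M},\mathcal{O}(m\mathcal{L}))=\tfrac{1}{2}\mathcal{L}^{2}m^{2}+O(m)$. Maximal Iitaka dimension $\overline{k}(M)=2$ forces $\overline{P}_{m}$ to grow quadratically in $m$, which is possible only if $\mathcal{L}^{2}>0$; this is precisely the inequality $\kappa(\mathcal{L})\leq\nu(\mathcal{L})$ between Iitaka and numerical dimension (see \cite{Lazarsfeld}), specialized to a nef bundle on a surface, where $\nu(\mathcal{L})=2$ is equivalent to $\mathcal{L}^{2}>0$. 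The delicate point of this route is the control of $h^{1}$ and $h^{2}$, which is exactly what makes the self-contained moving-part argument above preferable.
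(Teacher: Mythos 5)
Your proposal is correct, but its main argument takes a genuinely different route from the paper's. The paper stays entirely on the cohomological side: granting nef-ness from Lemma \ref{numerical}, it kills $h^{2}(m\mathcal{L})$ by Serre duality, kills $h^{1}(m\mathcal{L})$ by Kawamata--Viehweg vanishing, and then reads $\mathcal{L}^{2}>0$ off the Hirzebruch--Riemann--Roch formula $\overline{P}_{m}=\frac{m(m-1)}{2}\mathcal{L}^{2}+\frac{m}{2}\mathcal{L}\cdot\Sigma+\chi_{\mathcal{O}}$, since $\overline{P}_{m}$ grows quadratically while the remaining terms are linear in $m$. Your moving-part argument is instead purely intersection-theoretic: $M_{m}^{2}>0$ because the resolved moving part is base-point-free and generically finite onto a surface, and positivity then propagates to $\mathcal{L}^{2}$ through nef-ness of $\mathcal{L}$ and $M_{m}$ and effectivity of $F_{m}$; all steps check out, and in particular your inequality $M_{m}^{2}\geq(\pi^{*}M_{m}-E)^{2}$ is justified because $\pi^{*}M_{m}\cdot E=0$ and $E^{2}\leq 0$ for the exceptional part. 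What each approach buys: yours avoids vanishing theorems altogether, needing only nef-ness and the definition of $\overline{k}(M)$ --- it is in effect a self-contained proof of the standard fact that a nef divisor of maximal Iitaka dimension has positive self-intersection (\cite{Lazarsfeld}, bigness here being immediate from $\overline{k}(M)=2$) --- whereas the paper's computation, at the price of invoking Serre duality and Kawamata--Viehweg (whose ``big'' hypothesis is itself supplied by $\overline{k}(M)=2$), produces the explicit plurigenus formula, which has independent interest. Two small points on your write-up: your proof is actually direct, since the assumption $\mathcal{L}^{2}=0$ is never used, so the contradiction framing can simply be dropped; and your second, Riemann--Roch alternative is in spirit exactly the paper's proof, with the $h^{1}$, $h^{2}$ control you flag as delicate being precisely what the paper handles via the two vanishing statements.
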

\begin{proof}
By Lemma \ref{numerical}, the line bundle $\mathcal{L}$ is nef.
Since the Kodaira dimension of $\mathcal{L}$ is nonnegative, by
Serre duality we conclude that $H^{2}(\overline{M},
\mathcal{O}(m\mathcal{L}))$ vanishes for all positive integers $m$.
A Hirzebruch-Riemann-Roch computation shows that
\begin{align}\notag
\overline{P}_{m}=h^{1}(m\mathcal{L})+\frac{m(m-1)}{2}\mathcal{L}^{2}+\frac{m}{2}\mathcal{L}\cdot\Sigma+\chi_{\mathcal{O}}
\end{align}
where by $\chi_{\mathcal{O}}$ we denote the holomorphic Euler
characteristic of $\overline{M}$. Now, by an important
generalization of the classical Kodaira-Nakano vanishing theorem due
to Kawamata and Viehweg \cite{Lazarsfeld}, we know that
$h^{1}(m\mathcal{L})=0$ for any positive $m$. Since
$\overline{P}_{m}$ grows quadratically in $m$, we conclude that
$\mathcal{L}^{2}>0$.
\end{proof}

We conclude this section with a proposition regarding the
\emph{ampleness} properties of $\mathcal{L}$. This proposition will
be of some importance in Section \ref{application}.

\begin{proposition}\label{ample}
Let $(\overline{M},\Sigma)$ be a minimal pair such that $\Sigma$ is
smooth and reduced. The log-canonical bundle $\mathcal{L}$ is ample
iff $\overline{k}(M)=2$, $\Sigma$ does not contain rational and/or
elliptic components, and there are no interior rational
$(-2)$-curves.
\end{proposition}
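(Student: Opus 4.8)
The plan is to characterize ampleness via the Nakai--Moishezon criterion: on a smooth projective surface, $\mathcal{L}$ is ample iff $\mathcal{L}^{2}>0$ and $\mathcal{L}\cdot C>0$ for every irreducible curve $C$. Both implications then reduce to intersection-theoretic computations whose key inputs are the adjunction formula, Proposition \ref{exceptional}, Lemma \ref{numerical}, and the Hodge index theorem.

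For necessity, assume $\mathcal{L}$ is ample. Then $\mathcal{L}^{2}>0$ and the plurigenera grow quadratically, so $\overline{k}(M)=2$. For an irreducible component $E$ of $\Sigma$, smoothness of $\Sigma$ forces the components to be disjoint, so by adjunction $\mathcal{L}\cdot E=(K_{\overline{M}}+E)\cdot E=2g(E)-2$; ampleness gives $\mathcal{L}\cdot E>0$, hence $g(E)\geq 2$, excluding rational and elliptic components. Finally, if $C$ were an interior rational $(-2)$-curve, i.e. $C\cong\setC P^{1}$ with $C^{2}=-2$ and $C\cdot\Sigma=0$, then adjunction would give $K_{\overline{M}}\cdot C=0$, whence $\mathcal{L}\cdot C=0$, contradicting ampleness. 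This establishes the three conditions.

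For sufficiency, assume the three conditions. By Lemma \ref{numerical} the bundle $\mathcal{L}$ is nef, and by Proposition \ref{exceptional} we have $\mathcal{L}^{2}>0$; it remains to upgrade nefness to strict positivity on every irreducible curve $C$. For a boundary component $E$ the computation above gives $\mathcal{L}\cdot E=2g(E)-2\geq 2$. For $C$ not contained in $\Sigma$, suppose toward a contradiction that $\mathcal{L}\cdot C=0$. Since $\mathcal{L}^{2}>0$ and $C$ is a nonzero effective class, the Hodge index theorem forces $C^{2}<0$. From $\mathcal{L}\cdot C=0$ we read $K_{\overline{M}}\cdot C=-\Sigma\cdot C$, so adjunction yields $2p_{a}(C)-2=C^{2}-\Sigma\cdot C<0$, forcing $p_{a}(C)=0$ and thus $C\cong\setC P^{1}$ with $C^{2}=\Sigma\cdot C-2$. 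As $C^{2}<0$ we get $\Sigma\cdot C\in\{0,1\}$: if $\Sigma\cdot C=0$ then $C$ is an interior rational $(-2)$-curve, contradicting the hypothesis; if $\Sigma\cdot C=1$ then $C$ is an exceptional curve of the first kind with $C\cdot\Sigma\leq 1$, contradicting minimality of the pair. Hence $\mathcal{L}\cdot C>0$ for all $C$, and Nakai--Moishezon gives ampleness.

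The main obstacle is the sufficiency direction, specifically excluding irreducible curves orthogonal to $\mathcal{L}$. The delicate point is that nefness alone only yields $\mathcal{L}\cdot C\geq 0$; squeezing out strict positivity requires combining the Hodge index theorem (to force $C^{2}<0$) with adjunction (to pin down $C$ as a smooth rational curve), and then invoking both the $(-2)$-curve hypothesis and the minimality definition to dispose of the two surviving cases $\Sigma\cdot C=0$ and $\Sigma\cdot C=1$. Verifying that these are genuinely the only cases, and that each is ruled out by exactly one hypothesis, is where the argument must be carried out with care.
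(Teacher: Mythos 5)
Your proof is correct and follows essentially the same route as the paper's: the Nakai criterion combined with Lemma \ref{numerical}, Proposition \ref{exceptional}, the Hodge index theorem, and adjunction, with minimality and the interior rational $(-2)$-curve hypothesis ruling out the two residual cases. You simply carry out in full the adjunction computation and the case analysis $\Sigma\cdot C\in\{0,1\}$ that the paper's terse proof leaves implicit.
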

\begin{proof}
If $\mathcal{L}$ is ample, we clearly have $\overline{k}(M)=2$.
Moreover by the adjunction formula $\Sigma$ cannot contain rational
and/or elliptic components, and there are no interior rational
$(-2)$-curves, i.e., $(-2)$-curves not intersecting the boundary
divisor $\Sigma$. Assume now $(\overline{M}, \Sigma)$ to be
log-general and with $\Sigma$ not containing rational and elliptic
curves. By Proposition \ref{exceptional} we know that
$\mathcal{L}^{2}>0$. By the Hodge index theorem any divisor $E$ such
that $\mathcal{L}\cdot E=0$ must have negative self-intersection.
Since by assumption the pair is minimal and there are no interior
$(-2)$-curves, the proposition is now a consequence of Nakai's
criterion for ampleness of divisors on surfaces, see \cite{Van de
Ven}.

\end{proof}

\section{Poincar\'e metrics}

Let ($\overline{M}, \omega_{0})$ be a smooth algebraic surface
equipped with a K\"ahler metric. Let $\Sigma$ be a smooth, not
necessarily connected, reduced divisor on $\overline{M}$. Choose an
Hermitian metric $\|\cdot\|$ on
$\mathcal{O}_{\overline{M}}(\Sigma)$. Let $s\in H^{0}(\overline{M},
\mathcal{O}(\Sigma))$ be a defining section for the divisor
$\Sigma$. On $M$, for $T>0$ big enough
\begin{align}\label{Poincare}
\omega=T\omega_{0}-\sqrt{-1}
\partial\overline{\partial}\log\log^{2}\|s\|^{2}
\end{align}
defines a complete cuspidal K\"ahler metric. On each cusp this
metric asymptotically looks like a product of the Poincar\'e
punctured metric on the disk and the divisor. The number of cusp is
clearly in one to one correspondence with the number of connected
components of $\Sigma$ which we denote by $\Sigma_{i}$. More
precisely, one can easily show that on each cusp the Riemannian
metric associated to (\ref{Poincare}) is given by
\begin{align}\label{model}
g=dt^{2}+e^{-2t}\eta^{2}_{i}+p^{*}g_{\Sigma_{i}}+O(e^{-t})
\end{align}
where $\eta_{i}$ is a connection $1$-form for the normal bundle of
$\Sigma_{i}$, $g_{\Sigma_{i}}$ is a \emph{fixed} metric on
$\Sigma_{i}$, and where the perturbation $O(e^{-t})$ is understood
at any order for $g$. Note that the metric $g_{\Sigma_{i}}$ is
simply obtained by restricting
\begin{align}\notag
\hat{\omega}=T\omega_{0}-\frac{2\sqrt{-1}}{\log\|s\|^{2}}
\partial\overline{\partial}\log\|s\|^{2}
\end{align}
to the holomorphic curve $\Sigma_{i}$. Finally, we denote by
\begin{align}\notag
p: N_{\Sigma_{i}}\longrightarrow\Sigma_{i}
\end{align}
the $S^{1}$ bundle associated to the normal bundle of $\Sigma_{i}$
in $\overline{M}$. From the Riemannian geometry point of view we
have
\begin{align}\notag
g=dt^{2}+g_{t}+O(e^{-t})
\end{align}
where $g_{t}$ is a $1$-parameter families of metrics on
$N_{\Sigma_{i}}$ which are \emph{Riemannian submersions} with
respect to $p_{\eta_{i}}$ and $g_{\Sigma_{i}}$. These metrics are
\emph{volume collapsing} in the direction transverse to the divisor
$\Sigma$ only.

\section{$L^{2}$-cohomology and Seiberg-Witten estimates}\label{l2lebrun}

Let us begin with a review of some facts about $L^{2}$-cohomology
and its relation to the space of $L^{2}$-harmonic forms. The
interested reader is referred to \cite{Anderson1} for a nice survey
in this area. Given a orientable noncompact manifold $(M,g)$ we
have, when the differential $d$ is restricted to an appropriate
dense subset, a Hilbert complex
\begin{align}\notag
...\longrightarrow L^{2}\Omega^{k-1}_{g}(M)\longrightarrow
L^{2}\Omega^{k}_{g}(M)\longrightarrow
L^{2}\Omega^{k+1}_{g}(M)\longrightarrow...
\end{align}
where the inner products on the exterior bundles are induced by $g$.
Define the maximal domain of $d$, at the $k$-th level, to be
\begin{align}\notag
Dom^{k}(d)=\{\alpha\in L^{2}\Omega^{k}_{g}(M), d\alpha\in
L^{2}\Omega^{k+1}_{g}(M)\}
\end{align}
where $d\alpha\in L^{2}\Omega^{k+1}_{g}(M)$ is to be understood in
the distributional sense. The (reduced) $L^{2}$-cohomology groups
are then defined to be
\begin{align}\notag
H^{k}_{2}(M)=Z^{k}_{2}(M)/\overline{d Dom^{k-1}(d)},
\end{align}
where
\begin{align}\notag
Z^{k}_{2}(M)=\{\alpha\in L^{2}\Omega^{k}_{g}(M), d\alpha=0\}.
\end{align}
On $(M,g)$ there is a Hodge-Kodaira decomposition
\begin{align}\notag
L^{2}\Omega^{k}_{g}(M)=\mathcal{H}^{k}_{g}(M)\oplus\overline{dC^{\infty}_{c}\Omega^{k-1}}\oplus\overline{d^{*}C^{\infty}_{c}\Omega^{k+1}},
\end{align}
where
\begin{align}\notag
\mathcal{H}^{k}_{g}(M)=\{\alpha\in L^{2}\Omega^{k}_{g}(M),
d\alpha=0, d^{*}\alpha=0\}.
\end{align}
Moreover, if we assume $(M,g)$ to be complete the maximal and
minimal domain of $d$ coincide. In other words
\begin{align}\notag
\overline{d Dom^{k-1}(d)}=\overline{dC^{\infty}_{c}\Omega^{k-1}},
\end{align}
which implies
\begin{align}\notag
H^{k}_{2}(M)=\mathcal{H}^{k}_{g}(M).
\end{align}

Summarizing, if the manifold is complete, the harmonic $L^{2}$-forms
compute the reduced $L^{2}$-cohomology. Moreover, in this case the
$L^{2}$-harmonic forms can be characterized as follows
\begin{align}\notag
\mathcal{H}^{k}_{g}(M)=\{\alpha\in L^{2}\Omega^{k}_{2}(M), (d
d^{*}+d^{*}d)\alpha=0\}.
\end{align}
Finally, the orientability of $M$ gives a duality isomorphism via
the Hodge $*$ operator
\begin{align}\notag
\mathcal{H}^{k}_{g}(M)\simeq\mathcal{H}^{n-k}_{g}(M).
\end{align}
If the manifold $M$ has dimension $4n$ it then makes sense to talk
about $L^{2}$ self-dual and anti-self-dual forms on
$L^{2}\Omega^{2n}_{g}(M)$. If $\mathcal{H}^{2n}_{g}(M)$ is finite
dimensional, the concept of $L^{2}$-signature is well defined.

Apart from the basic concepts above, what will be important for us
is the following theorem of Zucker \cite{Zucker}. This classical
result provides a complete topological interpretation of the
$L^{2}$-cohomology groups of finite volume complex manifolds
equipped with Poincar\'e type metrics. The statement presented here
is adapted from \cite{Zucker} to fit with the purpose and notation
of this work.

\begin{others}[Zucker]\label{Zucker}
Let $\overline{M}$ be a smooth algebraic manifold. Let $\Sigma$ be a
reduced, smooth, divisor. If $M$ is equipped with a metric $g$
quasi-isometric to a Poincar\'e type metric then
\begin{align}\notag
H^{*}_{2}(M)\simeq H^{*}(\overline{M};\setR).
\end{align}
\end{others}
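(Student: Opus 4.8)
The plan is to prove the isomorphism in two stages: a reduction to the model geometry of (\ref{model}), followed by a local-to-global de Rham computation on $\overline{M}$. First I would invoke quasi-isometry invariance of reduced $L^{2}$-cohomology. Two uniformly comparable metrics induce comparable norms on each exterior power and share the same differential $d$, so the maximal domains $Dom^{k}(d)$, the cocycles $Z^{k}_{2}(M)$, and the closures $\overline{d\,Dom^{k-1}(d)}$ all coincide; hence $H^{*}_{2}(M)$ is unchanged if we replace $g$ by the model Poincar\'e metric $g=dt^{2}+e^{-2t}\eta_{i}^{2}+p^{*}g_{\Sigma_{i}}$ near each cusp. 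It then suffices to treat this model.

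Next I would set up a sheaf-theoretic argument on $\overline{M}$. For each open $U\subseteq\overline{M}$ let $\mathcal{L}^{k}(U)$ be the forms on $U\cap M$ that are $L^{2}$ with $L^{2}$ differential; this defines a complex of sheaves $\mathcal{L}^{\bullet}$ whose global sections recover $L^{2}\Omega^{\bullet}_{g}(M)$ with its differential. Since multiplication by a smooth function with Poincar\'e-bounded differential preserves both conditions, the $\mathcal{L}^{k}$ are fine, so the cohomology of the global-section complex equals the hypercohomology $\mathbb{H}^{k}(\overline{M},\mathcal{L}^{\bullet})$. The theorem then reduces to a \emph{local $L^{2}$-Poincar\'e lemma}: that $\mathcal{L}^{\bullet}$ resolves the constant sheaf $\underline{\mathbb{R}}$, i.e.\ its stalk cohomology is $\mathbb{R}$ in degree $0$ and vanishes above. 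Away from $\Sigma$ the metric is smooth and this is the ordinary Poincar\'e lemma, so all the content sits at points of $\Sigma$.

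For a point $p\in\Sigma$, a neighbourhood is metrically a punctured disk (Poincar\'e, in the normal direction) times a disk along $\Sigma$; since the second factor has trivial cohomology, a K\"unneth argument reduces everything to the one-dimensional cusp $(T,\infty)_{t}\times S^{1}_{\theta}$ with $g=dt^{2}+e^{-2t}d\theta^{2}$ and volume density $e^{-t}\,dt\,d\theta$. Expanding a closed $L^{2}$ one-form as $\sum_{n}(a_{n}\,dt+b_{n}\,d\theta)e^{in\theta}$, each nonzero mode is exact via
\begin{align}\notag
f_{n}=\frac{b_{n}}{in},\qquad \|f_{n}\|^{2}=\frac{1}{n^{2}}\int|b_{n}|^{2}e^{-t}\,dt<\infty,
\end{align}
the finiteness holding precisely because $e^{-t}\le e^{t}$ while $\int|b_{n}|^{2}e^{t}\,dt<\infty$. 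The zero mode is forced to be radial ($b_{0}$ constant, and $L^{2}$ forces $b_{0}=0$), and the only candidate radial harmonic one-form solves $a_{0}'=a_{0}$, giving $a_{0}\sim e^{t}$, which is not $L^{2}$ against $e^{-t}\,dt$. Thus the cusp has $H^{0}_{2}=\mathbb{R}$ and $H^{1}_{2}=0$, matching the filled disk. Geometrically, the transverse collapse $e^{-2t}$ makes the fibre class $[\eta_{i}]$ (the angular form) have infinite $L^{2}$-norm, so the cusp is $L^{2}$-cohomologically indistinguishable from $\Sigma$. Granting the local lemma, $\mathbb{H}^{k}(\overline{M},\underline{\mathbb{R}})\cong H^{k}(\overline{M};\mathbb{R})$ finishes the proof.

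The main obstacle I expect is exactly this local $L^{2}$-Poincar\'e lemma at the divisor, together with reconciling reduced and unreduced $L^{2}$-cohomology. The Fourier computation must be made quantitative: the uniform bound $\|f_{n}\|\le |n|^{-1}\|b_{n}\|$ on nonzero modes and the $e^{t}$-growth obstruction on the zero mode together furnish a spectral gap, hence closed range for $d$ on the cusp, so that the honest (not merely dense) exactness needed for $\mathcal{L}^{\bullet}$ to be a fine resolution holds and $Z^{k}_{2}/d\,Dom^{k-1}$ coincides with $\mathcal{H}^{k}_{g}(M)$. The completeness of the model, which by the discussion preceding the theorem identifies reduced $L^{2}$-cohomology with the harmonic space $\mathcal{H}^{k}_{g}(M)$, is what lets me pass freely between the two descriptions once closed range is established.
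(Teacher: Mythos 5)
Your proposal cannot be checked against an internal argument, because the paper does not prove this statement: it is quoted, in adapted form, from Zucker's paper and used as a black box (the author explicitly attributes both statement and proof to \cite{Zucker}). What you have written is in effect an attempt to reconstruct Zucker's own method, and the skeleton is the right one: quasi-isometry invariance of $L^{2}$-cohomology, the fine complex of sheaves of locally $L^{2}$ forms on $\overline{M}$, reduction to a local $L^{2}$-Poincar\'e lemma at points of $\Sigma$, and a final reconciliation of reduced with unreduced cohomology. Your treatment of the nonzero Fourier modes, with the uniform bound $\|f_{n}\|\leq|n|^{-1}\|b_{n}\,d\theta\|$, is also correct.

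The genuine gap is at the zero Fourier mode, exactly the point you flag as the main obstacle and then dispose of incorrectly. The sheaf argument needs \emph{exactness}: every closed $L^{2}$ form in degree $\geq 1$ on the punctured neighbourhood must be $d$ of an $L^{2}$ form. What you prove instead is that there is no radial harmonic $L^{2}$ $1$-form (the equation $a_{0}'=a_{0}$ forcing $e^{t}$-growth). But the local piece $(T,\infty)\times S^{1}$ has a boundary at $t=T$ and is incomplete, so harmonic forms there compute neither reduced nor unreduced $L^{2}$-cohomology, and no ``spectral gap, hence closed range'' conclusion can be drawn from them; the conflation is visibly false on this piece, since the volume form $e^{-t}dt\wedge d\theta$ is a nonzero harmonic $L^{2}$ $2$-form and is nevertheless exact, $e^{-t}dt\wedge d\theta=-d(e^{-t}d\theta)$ with $e^{-t}d\theta\in L^{2}$. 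The missing ingredient for the zero mode $a_{0}(t)\,dt$ is a weighted Hardy inequality: writing $a_{0}=e^{t/2}h$ with $h\in L^{2}(dt)$ (which is what $a_{0}\,dt\in L^{2}$ means), the primitive $F(t)=\int_{T}^{t}a_{0}(s)\,ds$ satisfies
\begin{align}\notag
e^{-t/2}F(t)=\int_{T}^{t}e^{-(t-s)/2}h(s)\,ds,
\end{align}
a convolution against the $L^{1}$ kernel $e^{-u/2}\mathbf{1}_{u>0}$, so Young's inequality gives $\|F\|_{L^{2}(e^{-t}dt)}\leq 2\|a_{0}\,dt\|_{L^{2}}$; it is this estimate, not the harmonic computation, that makes the zero mode exact. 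A companion estimate (taking the primitive from $t=\infty$) is required in the top degree of the punctured disk, a case your sketch omits entirely, and the same closed-range estimates are what legitimize the K\"unneth reduction, which is not automatic for unreduced $L^{2}$-cohomology. Once local exactness is established, your closing chain is correct: hypercohomology gives the unreduced groups $\simeq H^{*}(\overline{M};\setR)$, finite-dimensionality forces $d\,Dom^{k-1}(d)$ to be closed so that reduced equals unreduced, and completeness of $(M,g)$ identifies these with $\mathcal{H}^{*}_{g}(M)$.
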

\vskip 5mm We close this section with the $L^{2}$-analogue of some
well-known Seiberg-Witten estimates due to LeBrun \cite{LeBrun2}.
Let $(M,g)$ be a complete finite-volume 4-manifold. Let
$\mathcal{L}$ be a complex line bundle on $M$. By extending the
Chern-Weil theory for compact manifolds, we can define the
$L^{2}$-Chern class of $\mathcal{L}$. More precisely, given a
connection $A$ on $\mathcal{L}$ such that $F_{A}\in
L^{2}\Omega^{2}_{g}(M)$, we may define
\begin{align}\notag
c_{1}(\mathcal{L})=\frac{i}{2\pi}[F_{A}]_{L^{2}}
\end{align}
where with $F_{A}$ we indicate the curvature of the given
connection. It is an interesting corollary of the $L^{2}$-cohomology
theory that, on complete manifolds, such an $L^{2}$-cohomology
element is connection independent as long as we allow connections
that differ by a $1$-form in the maximal domain of the $d$ operator.
More precisely, let $A^{'}$ be a connection on $\mathcal{L}$ such
that $A^{'}=A+\alpha$ with $\alpha\in L^{2}_{1}\Omega^{1}_{g}(M)$.
We then have $F_{A^{'}}=F_{A}+d\alpha$ and therefore by the
Hodge-Kodaira decomposition we conclude that
$\frac{i}{2\pi}[F_{A}]_{L^{2}}=\frac{i}{2\pi}[F_{A^{'}}]_{L^{2}}$.
Similarly, the associated $L^{2}$-Chern number
$c^{2}_{1}(\mathcal{L})$ is also well defined.

The following lemma is an easy consequence of the Hodge-Kodaira
decomposition. For the details of the proof see \cite{luca2}.
\begin{lemma}\label{lebruno}
Given $\mathcal{L}$ and $A$ as above, we have
\begin{align}\notag
\int_{M}|F^{+}_{A}|^{2}d\mu_{g}\geq
4\pi^{2}(c^{+}_{1}(\mathcal{L}))^{2}
\end{align}
where $c^{+}_{1}(\mathcal{L})$ is the self-dual part of the
g-harmonic $L^{2}$-representative of $[c_{1}(\mathcal{L})]$.
\end{lemma}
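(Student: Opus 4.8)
The plan is to read off the inequality from the $L^{2}$ Hodge-Kodaira decomposition, reducing everything to a single orthogonality statement between self-dual parts. Since $(M,g)$ is complete and the Bianchi identity gives $dF_{A}=0$, the real $L^{2}$-form $\psi=\frac{i}{2\pi}F_{A}$ is closed and represents $c_{1}(\mathcal{L})$. I would decompose
\begin{align}\notag
\psi=h+d\gamma,\qquad h\in\mathcal{H}^{2}_{g}(M),\quad d\gamma\in\overline{dC^{\infty}_{c}\Omega^{1}},
\end{align}
with $h$ the $g$-harmonic $L^{2}$-representative of $c_{1}(\mathcal{L})$, so that $c^{+}_{1}(\mathcal{L})=h^{+}$ and $(c^{+}_{1}(\mathcal{L}))^{2}=\int_{M}h^{+}\wedge h^{+}=\|h^{+}\|^{2}_{L^{2}}$. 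Because $|\psi^{+}|=\frac{1}{2\pi}|F^{+}_{A}|$ one has $\int_{M}|F^{+}_{A}|^{2}d\mu_{g}=4\pi^{2}\|\psi^{+}\|^{2}_{L^{2}}=4\pi^{2}\|h^{+}+(d\gamma)^{+}\|^{2}_{L^{2}}$. Hence the lemma becomes the assertion that $h^{+}$ and $(d\gamma)^{+}$ are $L^{2}$-orthogonal: granting this, the Pythagorean identity yields $\|h^{+}+(d\gamma)^{+}\|^{2}_{L^{2}}=\|h^{+}\|^{2}_{L^{2}}+\|(d\gamma)^{+}\|^{2}_{L^{2}}\geq\|h^{+}\|^{2}_{L^{2}}$, and multiplying by $4\pi^{2}$ gives the claim.

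I would establish this orthogonality from two independent identities. First, the Hodge-Kodaira decomposition gives the global relation $\langle h,d\gamma\rangle_{L^{2}}=0$; since the Hodge $*$ preserves the self-dual/anti-self-dual splitting, the cross terms drop and this reads
\begin{align}\notag
\langle h^{+},(d\gamma)^{+}\rangle_{L^{2}}+\langle h^{-},(d\gamma)^{-}\rangle_{L^{2}}=0.
\end{align}
Second, using the pointwise identity $\alpha\wedge\beta=(\langle\alpha^{+},\beta^{+}\rangle-\langle\alpha^{-},\beta^{-}\rangle)\,d\mu_{g}$ valid for $2$-forms in dimension four, one gets
\begin{align}\notag
\int_{M}h\wedge d\gamma=\langle h^{+},(d\gamma)^{+}\rangle_{L^{2}}-\langle h^{-},(d\gamma)^{-}\rangle_{L^{2}}.
\end{align}
The left-hand side vanishes: as $h$ is closed, $h\wedge d\gamma=d(h\wedge\gamma)$, and approximating $d\gamma$ in $L^{2}$ by $d\phi_{n}$ with $\phi_{n}\in C^{\infty}_{c}\Omega^{1}$ reduces each integral to $\int_{M}d(h\wedge\phi_{n})=0$ by Stokes, which passes to the limit since $\mu\mapsto\int_{M}h\wedge\mu$ is $L^{2}$-continuous. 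Adding and subtracting the two displayed relations forces $\langle h^{+},(d\gamma)^{+}\rangle_{L^{2}}=0$, exactly as needed; equality in the lemma then holds precisely when $(d\gamma)^{+}=0$.

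The one genuinely substantive point — and the reason completeness of $g$ is indispensable — is the integration by parts giving $\int_{M}h\wedge d\gamma=0$ on the open manifold $M$. This is where I would invoke the fact, recalled above, that for a complete metric the maximal and minimal domains of $d$ coincide, so that the exact part $d\gamma$ truly lies in the $L^{2}$-closure of $dC^{\infty}_{c}\Omega^{1}$ and the compactly supported approximation survives the limiting process; without this one cannot discard the boundary contribution at infinity. Everything else is the standard pointwise linear algebra of the self-dual/anti-self-dual decomposition combined with the $L^{2}$ Hodge theory already in place, which is why the estimate is, as claimed, an easy consequence of the Hodge-Kodaira decomposition.
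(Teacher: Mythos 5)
Your proof is correct and follows precisely the route the paper indicates: the lemma is read off from the $L^{2}$ Hodge--Kodaira decomposition, with completeness used to identify the exact part of $\frac{i}{2\pi}F_{A}$ with an element of $\overline{dC^{\infty}_{c}\Omega^{1}}$, and Stokes plus compactly supported approximation giving the $L^{2}$-orthogonality of the self-dual parts. The paper itself defers these details to \cite{luca2}, calling the lemma ``an easy consequence of the Hodge-Kodaira decomposition,'' and your argument is exactly that consequence worked out.
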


We can now formulate the $L^{2}$-analogue of the scalar curvature
estimate discovered in \cite{LeBrun2} for compact manifolds.
\begin{theorem}\label{bochner}
Let $(M^{4},g)$ be a finite volume Riemannian manifold where $g$ is
$C^{2}$-asymptotic to a Poincar\'e metric. Let $(A,\psi)\in
L^{2}_{1}(M,g)$ be an irreducible solution of the SW equations
associated to a \emph{$Spin^{c}$} structure $\mathfrak{c}$ with
determinant line bundle $\mathcal{L}$. Then
\begin{align}\notag
\int_{M}s^{2}_{g}d\mu_{g}\geq 32\pi^{2}(c^{+}_{1}(\mathcal{L}))^{2}
\end{align}
with equality if and only if $g$ has constant negative scalar
curvature, and is K\"ahler with respect to a complex structure
compatible with $\mathfrak{c}$.
\end{theorem}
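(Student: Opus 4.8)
The plan is to imitate LeBrun's original Weitzenböck argument from the compact case, now carried out with the $L^{2}$-Hodge theory developed in the preceding section. First I would write down the Seiberg-Witten equations for the irreducible solution $(A,\psi)$: the curvature equation $F_{A}^{+}=\sigma(\psi)$ (where $\sigma(\psi)$ is the trace-free quadratic expression in $\psi$, normalized so that $|F_{A}^{+}|=\tfrac{1}{2\sqrt 2}|\psi|^{2}$ pointwise) together with the Dirac equation $D_{A}\psi=0$. Applying the Lichnerowicz-Weitzenböck formula $D_{A}^{*}D_{A}=\nabla_{A}^{*}\nabla_{A}+\tfrac{s_{g}}{4}+\tfrac{1}{2}F_{A}^{+}$ to the harmonic spinor $\psi$, pairing with $\psi$, and integrating by parts yields, after the standard manipulation using $\langle F_{A}^{+}\cdot\psi,\psi\rangle=\tfrac12|\psi|^{4}$, the pointwise/integral refinement
\begin{align}\notag
\int_{M}|\psi|^{4}\,d\mu_{g}\leq\int_{M}s_{g}^{2}\,d\mu_{g}.
\end{align}
The integration by parts is legitimate precisely because $(A,\psi)\in L^{2}_{1}$ and $g$ is complete of finite volume, so the boundary terms at the cusps vanish and $\nabla_{A}\psi\in L^{2}$; this is where the $C^{2}$-asymptotic-to-Poincaré hypothesis does its work.

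Next I would convert the bound on $\int|\psi|^{4}$ into a bound involving the $L^{2}$-Chern number. Using $|F_{A}^{+}|^{2}=\tfrac{1}{8}|\psi|^{4}$ and Lemma \ref{lebruno}, which gives $\int_{M}|F_{A}^{+}|^{2}d\mu_{g}\geq 4\pi^{2}(c_{1}^{+}(\mathcal{L}))^{2}$, one chains the inequalities:
\begin{align}\notag
\int_{M}s_{g}^{2}\,d\mu_{g}\geq\int_{M}|\psi|^{4}\,d\mu_{g}=8\int_{M}|F_{A}^{+}|^{2}\,d\mu_{g}\geq 32\pi^{2}(c_{1}^{+}(\mathcal{L}))^{2}.
\end{align}
Here it is essential that the curvature $F_{A}$ of the connection coming from an $L^{2}_{1}$ solution is itself in $L^{2}$, so that the $L^{2}$-Chern number $(c_{1}^{+}(\mathcal{L}))^{2}$ is defined via the harmonic representative and Lemma \ref{lebruno} applies verbatim. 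That is the whole inequality.

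For the equality discussion I would trace back through the two inequalities. Equality in Lemma \ref{lebruno} forces $F_{A}^{+}$ to be $L^{2}$-harmonic (its coexact part vanishes), and equality in the Weitzenböck step forces $\nabla_{A}\psi=0$ together with $s_{g}\,|\psi|^{2}=-|\psi|^{2}\,|\psi|^{2}$ pointwise on the locus $\psi\neq 0$, hence $s_{g}=-|\psi|^{2}$. A parallel spinor of unit-length-proportional type together with $F_{A}^{+}=\sigma(\psi)\neq 0$ is exactly the condition that $g$ be Kähler with $\psi$ a multiple of a canonical section, and then $s_{g}=-|\psi|^{2}$ being harmonic (constant) forces $s_{g}$ to be a negative constant; compatibility of the complex structure with $\mathfrak{c}$ is automatic from the Kähler identity $F_{A}^{+}=-\tfrac{i}{2}s_{g}\,\omega/|\omega|$ relating the self-dual curvature to the Kähler form. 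I expect the main obstacle to be the analytic justification of the integration by parts at the cusps, i.e.\ verifying that the decay built into the $C^{2}$-asymptotic-to-Poincaré hypothesis together with $(A,\psi)\in L^{2}_{1}$ really kills the boundary contributions in the Weitzenböck integration; the algebraic inequalities themselves are routine once that is secured.
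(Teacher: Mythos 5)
Your proposal is correct and takes essentially the same route as the paper: the paper's own proof is a short sketch consisting exactly of LeBrun's Weitzenb\"ock argument combined with Lemma \ref{lebruno}, with the integration by parts justified by completeness and the analytical details deferred to \cite{Rollin} and \cite{luca2}. Your reconstruction fills in the algebra of that sketch, and your identification of the cusp integration by parts as the genuinely delicate step is precisely the point the paper delegates to those references.
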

\begin{proof}
The proof is largely based on an idea of LeBrun \cite{LeBrun2}. By
using Lemma \ref{lebruno}, the proof reduces to an integration by
part using the completeness of $g$. For the analytical details one
may refer to \cite{Rollin} and \cite{luca2}.
\end{proof}

\section{The metric compactifications}\label{compactification}

Given $(M, g)$, where $g$ is a Poincar\'e metric, we want to briefly
describe a family of metric compactifications $(\overline{M},
g_{j})$. Clearly, each of the cusp end of $M$ can be closed
topologically as a manifold by adding the corresponding divisor
$\Sigma_{i}$. Recall that on each cusp end of $M$ the metric $g$ is
given, in appropriate coordinates, by
$g=dt^{2}+e^{-2t}\eta^{2}+g_{\Sigma_{i}}+O(e^{-t})$ and for $t>0$.
We can then define, on a closed tubular neighborhood of $\Sigma$ in
$\overline{M}$, a sequence of metrics $\{\tilde{g}_{j}\}$  by
\begin{align}\notag
\tilde{g}_{j}=dt^{2}+\varphi^{2}_{j}(t)\eta^{2}_{i}+g_{\Sigma_{i}}
\end{align}
where the index $i$ runs over all the connected components of
$\Sigma$ and where $\varphi_{j}(t)$ is a smooth warping function
such that:
\begin{center}
\begin{enumerate}
\item $\varphi_{j}(t)=e^{-t}$ for $t\in[0, j+1]$;
\item $\varphi_{j}(t)= T_{j}-t$ for $t\in [j+1+\epsilon, T_{j}]$.
\end{enumerate}
\end{center}
Here $\epsilon$ is a fixed number that can be chosen to be small,
and $T_{j}$ is an appropriate number bigger than $j+1+\epsilon$.
Because of the second condition above, $\tilde{g}_{j}$ is smooth for
$t$ approaching $T_{j}$. For later convenience we want to prescribe
in more details the behavior of $\varphi_{j}(t)$ in the interval
$t\in[j+1, j+1+\epsilon]$. We require that
$\partial^{2}_{t}\varphi_{j}(t)$ decreases from $e^{-(j+1)}$ to $0$
in the interval $[j+1, j+1+\delta_{j}]$ where $\delta_{j}$ is a
positive number less than $\epsilon$. Then for
$t\in[j+1+\delta_{j},\epsilon]$, we make
$\partial^{2}_{t}\varphi_{j}$ very negative in order to decrease
$\partial_{t}\varphi_{j}$ to $-1$ and smoothly glue $\varphi_{j}(t)$
to the function $T_{j}-t$. Moreover, by eventually letting the
parameters $\delta_{j}$ go to zero as $j$ goes to infinity, we
require $\frac{|\partial_{t}\varphi_{j}|}{\varphi_{j}}$ to be
increasing in the interval $[j+1, j+1+\delta_{j}]$. Finally, we
require $\frac{|\partial_{t}\varphi_{j}|}{\varphi_{j}}$ to be
bounded from above uniformly in $j$. These conditions on the warping
factors $\varphi_{j}$ are taken from \cite{Rollin}. They are
particularly useful in proving the uniform Poincar\'e inequality on
$1$-form given in section \ref{analisi1}.

With the metrics $\tilde{g_{j}}$ at our disposal, we are now ready
to describe the family of metric compactifications $(\overline{M},
g_{j})$. For later convenience, we allow the metric $g$ to be
asymptotic to a Poincar\'e type metric in the $C^{2}$-topology.
Thus, if $g$ is such a metric we set
\begin{align}\notag
g_{j}=(1-\chi_{j})g+\chi_{j}\tilde{g}_{j}
\end{align}
where ${\chi_{j}(t)}$ is a sequence of smooth increasing functions
defined on the cusps of $M$ such that $\chi_{j}(t)=0$ if $t\leq j$
and $\chi_{j}(t)=1$ if $t\geq j+1$. Note that the metrics $g_{j}$
are by construction isometric to $g$ on bigger and bigger compact
sets of $M$ as $j$ goes to infinity. In this sense we think the
sequence of Riemannian manifolds $(\overline{M}, g_{j})$ as compact
approximations of $(M, g)$.

We conclude this section with two simple propositions regarding the
volume and the scalar curvature of the Riemannian manifolds
$(\overline{M},g_{j})$.

\begin{proposition}\label{scalar curvature}
The scalar curvature of the metrics $\{g_{j}\}$ can be expressed as
\begin{align}\notag
s_{g_{j}}=s^{b}_{g_{j}}-2\chi_{j}\frac{\partial^{2}_{t}\varphi_{j}}{\varphi_{j}}
\end{align}
where $s^{b}_{g_{j}}$ is a smooth function on $\overline{M}$ that
can be bounded uniformly in $j$.
\end{proposition}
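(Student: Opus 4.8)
The plan is to reduce everything to a curvature computation on a single cusp, where all non-uniform behavior is concentrated, and to identify $-2\varphi_j''/\varphi_j$ as the unique sectional-curvature contribution that can fail to be bounded. First I would recall that on each cusp $g_j=(1-\chi_j)g+\chi_j\tilde g_j$, so $g_j=g$ for $t\le j$ and $g_j=\tilde g_j$ for $t\ge j+1$. Since $g$ is asymptotic to a Poincar\'e metric and the complement of the cusps in $\overline M$ is compact, $s_{g_j}=s_g$ is uniformly bounded on $\{t\le j\}$; there $\chi_j=0$, so the claimed formula holds with $s^b_{g_j}=s_g$. It therefore suffices to analyze $\{t\ge j\}$.

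On $\{t\ge j+1\}$ we have the warped model $\tilde g_j=dt^2+\varphi_j^2\eta_i^2+p^*g_{\Sigma_i}$, which I would analyze in the adapted orthonormal frame $e_0=\partial_t$, $e_3=\varphi_j^{-1}U$ (with $U$ the vertical field dual to $\eta_i$), and $e_1,e_2$ horizontal lifts of a $g_{\Sigma_i}$-orthonormal frame on $\Sigma_i$. Writing $s_{\tilde g_j}=\sum_{a\ne b}K(e_a,e_b)$, the radial--vertical plane $\{e_0,e_3\}$ behaves like the two-dimensional warped product $dt^2+\varphi_j^2\,d\theta^2$ and gives $K(e_0,e_3)=-\varphi_j''/\varphi_j$, contributing $-2\varphi_j''/\varphi_j$ to the scalar curvature. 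In the model product case $d\eta_i=0$ this is the only nonzero mixed curvature and $s_{\tilde g_j}=-2\varphi_j''/\varphi_j+s_{\Sigma_i}$; in general the twisting $d\eta_i\ne 0$ adds O'Neill corrections of size $\varphi_j^2|d\eta_i|^2$ in the planes $\{e_1,e_2\}$ and $\{e_a,e_3\}$, together with fiber second-fundamental-form terms controlled by $\varphi_j'/\varphi_j$. Crucially, none of these involves $\varphi_j''$, and since $\varphi_j\le 1$, $d\eta_i$ is the curvature of a fixed connection on the compact curve $\Sigma_i$, and $|\partial_t\varphi_j|/\varphi_j$ is bounded uniformly in $j$ by construction, all of them are bounded independently of $j$. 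Hence on $\{t\ge j+1\}$, where $\chi_j=1$, we obtain $s_{\tilde g_j}=s^b_{g_j}-2\varphi_j''/\varphi_j$ with $s^b_{g_j}$ uniformly bounded.

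It remains to treat the interpolation annulus $\{j\le t\le j+1\}$. The key observation is that condition (1) forces $\varphi_j=e^{-t}$ there, so $\varphi_j''/\varphi_j\equiv 1$ and the term $-2\chi_j\varphi_j''/\varphi_j$ is harmless. The scalar curvature of $g_j=(1-\chi_j)g+\chi_j\tilde g_j$ is a universal smooth expression in $\chi_j$, its first two derivatives, and the components of $g$ and $\tilde g_j$ together with their first two derivatives; since $\chi_j$ is a translate of a fixed cutoff (so $\chi_j,\chi_j',\chi_j''$ are uniformly bounded) and both $g$ and $\tilde g_j$ are uniformly controlled on this annulus, $s_{g_j}$ is uniformly bounded there, and I would absorb it, together with the bounded $-2\chi_j$, into $s^b_{g_j}$. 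Setting $s^b_{g_j}:=s_{g_j}+2\chi_j\varphi_j''/\varphi_j$ globally (which is smooth, as $\chi_j\varphi_j''/\varphi_j$ vanishes near the core where $\varphi_j$ is linear), the three regimes above show $s^b_{g_j}$ is smooth and bounded uniformly in $j$, which is the assertion.

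The main obstacle I anticipate is the bookkeeping in the scalar-curvature computation for $\tilde g_j$: one must verify that the connection form $\eta_i$ produces only O'Neill-type corrections weighted by nonnegative powers of $\varphi_j$ and by the uniformly bounded ratio $\varphi_j'/\varphi_j$, never by a further $\varphi_j''/\varphi_j$, so that the collapse $\varphi_j\to 0$ generates no additional unbounded curvature. Making the submersion curvature formulas precise for this non-product, twisted circle bundle---and checking the uniform bound on the $\varphi_j^2|d\eta_i|^2$ terms---is where the real work lies; the interpolation estimate is then routine given the uniform control of the fixed cutoff $\chi_j$.
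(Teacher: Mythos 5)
Your proposal is correct, but note that the paper itself offers no argument for this proposition: it simply cites \cite{Rollin} and \cite{luca1}. What you have written out is essentially the computation those references contain, so you have supplied the missing content rather than diverged from it. Your three-region decomposition is the right one, and the frame computation on $\{t\geq j+1\}$ is accurate: for $dt^{2}+\varphi_{j}^{2}\eta_{i}^{2}+p^{*}g_{\Sigma_{i}}$ one finds $K(e_{0},e_{3})=-\varphi_{j}''/\varphi_{j}$, $K(e_{0},e_{1})=K(e_{0},e_{2})=0$, $K(e_{1},e_{3})=K(e_{2},e_{3})=\tfrac{1}{4}\varphi_{j}^{2}|d\eta_{i}|^{2}$, $K(e_{1},e_{2})=K_{\Sigma_{i}}-\tfrac{3}{4}\varphi_{j}^{2}|d\eta_{i}|^{2}$, so that
\begin{align}\notag
s_{\tilde{g}_{j}}=-2\frac{\partial^{2}_{t}\varphi_{j}}{\varphi_{j}}+s_{\Sigma_{i}}-\frac{1}{2}\varphi_{j}^{2}|d\eta_{i}|^{2},
\end{align}
which is even slightly cleaner than you anticipated: the $\varphi_{j}'/\varphi_{j}$ terms cancel identically in the scalar curvature (they survive only in off-diagonal curvature components), so the uniform bound on $|\partial_{t}\varphi_{j}|/\varphi_{j}$ is not actually needed here, although invoking it is harmless since the construction provides it. Your observation that $\chi_{j}\varphi_{j}''/\varphi_{j}$ extends smoothly by zero across the core circle, where $\varphi_{j}$ is linear, is a detail worth keeping.

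The one point you should tighten is the interpolation annulus $\{j\leq t\leq j+1\}$. Your phrase ``the components of $g$ and $\tilde{g}_{j}$ together with their first two derivatives \ldots are uniformly controlled'' is frame-dependent and, taken literally in a fixed frame along the cusp, is not enough: the scalar curvature also involves the inverse metric, and the $\eta_{i}$-component of $g_{j}$ on this annulus is of size $e^{-2j}$, so fixed-frame inverse components blow up as $j\to\infty$. The correct formulation is that $\tilde{g}_{j}-g$ is small in the $C^{2}$-norm \emph{measured with respect to $g$} (equivalently, the Poincar\'e model): on $[j,j+1]$ one has $\tilde{g}_{j}=dt^{2}+e^{-2t}\eta_{i}^{2}+g_{\Sigma_{i}}$ exactly, and the paper's hypothesis that the perturbation $O(e^{-t})$ holds at all orders for $g$ relative to this model gives $|\tilde{g}_{j}-g|_{g}+|\nabla^{g}(\tilde{g}_{j}-g)|_{g}+|(\nabla^{g})^{2}(\tilde{g}_{j}-g)|_{g}=O(e^{-j})$, while $|d\chi_{j}|_{g}=|\chi_{j}'|$ and $|\chi_{j}''|$ are bounded since $\chi_{j}$ is a translated fixed cutoff. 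Then $s_{g_{j}}=s_{g}+O(e^{-j})$ on the annulus, which is the uniform bound you want. With that adjustment the argument is complete.
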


\begin{proposition}\label{volumes}
There exists a constant $K>0$ such that
\begin{align}\notag
Vol_{g_{j}}(\overline{M})\leq K
\end{align}
for any $j$.
\end{proposition}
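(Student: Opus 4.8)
The plan is to split $\overline{M}$ into the region where $g_{j}$ coincides with the original Poincar\'e metric $g$ and the collar near $\Sigma$ where the metric has been modified, and to control each piece separately. Since $g$ is $C^{2}$-asymptotic to a Poincar\'e type metric, the volume $V_{0}:=\mathrm{Vol}_{g}(M)$ is finite and independent of $j$; this supplies the bulk of the estimate. By construction $g_{j}=g$ on the set $\{t\le j\}$ together with the compact part of $\overline{M}$ away from the cusps, so that portion contributes at most $V_{0}$ to $\mathrm{Vol}_{g_{j}}(\overline{M})$. It then remains only to bound the volume of the collar $\{t\ge j\}$ inside each cusp, where the metric interpolates toward, and then equals, the capping metric $\tilde{g}_{j}$.

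On each cusp the modified metric has the warped-product form $\tilde{g}_{j}=dt^{2}+\varphi^{2}_{j}(t)\eta^{2}_{i}+g_{\Sigma_{i}}$, whose Riemannian volume form is $\varphi_{j}(t)\,dt\wedge\eta_{i}\wedge d\mu_{g_{\Sigma_{i}}}$; this is consistent with the Riemannian submersion structure recorded in the section on Poincar\'e metrics. Integrating over the $S^{1}$-fibre and over $\Sigma_{i}$ produces a fixed constant $C_{i}=2\pi\,\mathrm{Area}(\Sigma_{i},g_{\Sigma_{i}})$, so the cap volume over $[j+1,T_{j}]$ equals $C_{i}\int_{j+1}^{T_{j}}\varphi_{j}(t)\,dt$. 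The key observation is that $\varphi_{j}$ is non-increasing on $[j+1,T_{j}]$: its derivative starts at $-e^{-(j+1)}$, stays negative while $\partial^{2}_{t}\varphi_{j}$ first drops from $e^{-(j+1)}$ to $0$, and then decreases monotonically to $-1$ before matching the linear piece $T_{j}-t$. Hence $0\le\varphi_{j}(t)\le\varphi_{j}(j+1)=e^{-(j+1)}$ throughout the cap. Moreover, since the linear piece has slope $-1$ and starts from the value $\varphi_{j}(j+1+\epsilon)\le e^{-(j+1)}$, one finds $T_{j}-(j+1)\le\epsilon+e^{-(j+1)}\le\epsilon+1$, so the cap has uniformly bounded $t$-extent. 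Combining, $C_{i}\int_{j+1}^{T_{j}}\varphi_{j}\,dt\le C_{i}(\epsilon+1)\,e^{-(j+1)}$, which tends to $0$ as $j\to\infty$.

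The transition region $\{j\le t\le j+1\}$ is handled similarly. There $\varphi_{j}(t)=e^{-t}$ by construction, so both $g$ and $\tilde{g}_{j}$ are uniformly comparable to $dt^{2}+e^{-2t}\eta^{2}_{i}+g_{\Sigma_{i}}$; consequently the convex combination $g_{j}=(1-\chi_{j})g+\chi_{j}\tilde{g}_{j}$ has volume form bounded by a fixed multiple of $e^{-t}\,dt\wedge\eta_{i}\wedge d\mu_{g_{\Sigma_{i}}}$, and the transition volume over $[j,j+1]$ is at most $C'_{i}e^{-j}$. Summing the finitely many cusp contributions and adding $V_{0}$ gives $\mathrm{Vol}_{g_{j}}(\overline{M})\le V_{0}+Ce^{-j}$ for a constant $C$ independent of $j$; in fact $\mathrm{Vol}_{g_{j}}(\overline{M})\to V_{0}$, so setting $K=\sup_{j}\bigl(V_{0}+Ce^{-j}\bigr)$ finishes the proof.

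The main obstacle is organizing the warping-function estimates in the gluing interval $[j+1,j+1+\epsilon]$: one must verify both that $\varphi_{j}$ stays non-increasing, so that the pointwise bound $\varphi_{j}\le e^{-(j+1)}$ holds, and that $T_{j}$ stays within a bounded distance of $j+1$. These are exactly the properties encoded in the prescription of $\partial^{2}_{t}\varphi_{j}$ given in the construction. A secondary technical point is the comparison of the volume form of the convex-combination metric $g_{j}$ with those of its two constituents on the transition collar; this is routine once one notes that the two metrics are uniformly equivalent there, but it is the only place where the nonlinearity of the volume functional in the metric actually enters.
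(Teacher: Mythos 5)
Your proof is correct. Note that the paper itself contains no argument for this proposition: it simply refers to \cite{luca1} and \cite{Rollin} for the details, so your write-up supplies exactly what is omitted, and it does so along the lines implicit in the construction of Section \ref{compactification}. The decomposition into the bulk $\{t\leq j\}$ (where $g_{j}=g$ and the finite volume of the Poincar\'e-type metric gives the uniform bound $V_{0}$), the transition collar $[j,j+1]$ (where $\varphi_{j}=e^{-t}$, so $g$, $\tilde{g}_{j}$, and hence their convex combination $g_{j}$ are uniformly comparable to the model $dt^{2}+e^{-2t}\eta_{i}^{2}+g_{\Sigma_{i}}$), and the cap $[j+1,T_{j}]$ is the natural one, and your two key cap estimates are the right ones: $\varphi_{j}\leq e^{-(j+1)}$ by monotonicity, and $T_{j}-(j+1)\leq\epsilon+e^{-(j+1)}$ from the linear piece, giving a cap volume of order $e^{-j}$. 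One small point worth making explicit: the monotonicity of $\varphi_{j}$ across the gluing interval $[j+1,j+1+\delta_{j}]$ uses that the derivative can increase by at most $\delta_{j}e^{-(j+1)}$ there (since $0\leq\partial_{t}^{2}\varphi_{j}\leq e^{-(j+1)}$), which fails to overcome the initial slope $-e^{-(j+1)}$ precisely because $\delta_{j}\leq\epsilon<1$; this is guaranteed by the paper's stipulation that $\epsilon$ is fixed and small, and you correctly flag it as the property encoded in the prescription of $\partial_{t}^{2}\varphi_{j}$.
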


One may refer to \cite{luca1} and \cite{Rollin} for more details.

\section{Poincar\'e inequalities and convergence of
1-forms}\label{analisi1}

We need to show that, given the sequence of metrics $\{g_{j}\}$, we
can find a uniform Poincar\'e inequality on functions. We have the
following lemma.
\begin{lemma}\label{mean}
Consider the metric $g=dt^{2}+g_{t}$ on the product
$[0,\infty)\times N$, such that the mean curvature of the cross
section $N$ is uniformly bounded from below by a positive constant
$h_{0}$. Then, for any function $f$ we have
\begin{align}\notag
\int |\partial_{t}f|^{2}d\mu_{g}\geq h^{2}_{0}\int
|f|^{2}d\mu_{g}+h_{0}\int_{t=T}|f|^{2}d\mu_{g_{t}}-h_{0}\int_{t=0}|f|^{2}d\mu_{g_{t}}.
\end{align}
\end{lemma}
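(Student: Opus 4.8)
The plan is to reduce the inequality to a one–dimensional integration by parts along the $t$–direction: since the left–hand side involves only $\partial_t f$, the tangential geometry of the slices $\{t\}\times N$ enters solely through the evolution of their volume density. First I would split the volume form as $d\mu_g=d\mu_{g_t}\,dt$ and record the first variation of the slice measure. For $g=dt^{2}+g_t$ the shape operator of $\{t\}\times N$ with respect to the unit normal $\partial_t$ is $\frac{1}{2}g_t^{-1}\partial_t g_t$, so its mean curvature $H$ equals $\pm\partial_t\log\sqrt{\det g_t}$, and hence $\partial_t(d\mu_{g_t})=\pm H\,d\mu_{g_t}$. The essential point is to fix the sign convention so that the \emph{collapsing} cross sections of a cusp carry \emph{positive} mean curvature; with that orientation $\partial_t(d\mu_{g_t})=-H\,d\mu_{g_t}$ and the hypothesis reads $H\geq h_0>0$.

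Next I would integrate the total $t$–derivative of $|f|^{2}\,d\mu_{g_t}$ over the slab $[0,T]\times N$. Using $\partial_t\big(|f|^{2}\,d\mu_{g_t}\big)=2f\,\partial_t f\,d\mu_{g_t}-H|f|^{2}\,d\mu_{g_t}$ together with Fubini in the tangential variables, the fundamental theorem of calculus yields the exact identity
\begin{align}\notag
\int 2f\,\partial_t f\,d\mu_g=\int H|f|^{2}\,d\mu_g+\int_{t=T}|f|^{2}\,d\mu_{g_t}-\int_{t=0}|f|^{2}\,d\mu_{g_t}.
\end{align}
The boundary contribution carries a $+$ sign at $t=T$ and a $-$ sign at $t=0$ precisely because the slice density is \emph{decreasing}, i.e. $\partial_t(d\mu_{g_t})=-H\,d\mu_{g_t}$; adopting the opposite mean–curvature convention would interchange these two signs, which is the pitfall to avoid.

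Finally I would insert the lower bound $H\geq h_0$ into the mean–curvature term and estimate the cross term $\int 2f\,\partial_t f\,d\mu_g$ by Cauchy–Schwarz, choosing the weight so that the gradient energy $\int|\partial_t f|^{2}\,d\mu_g$ is isolated on the left while a term proportional to $\int|f|^{2}\,d\mu_g$ and the two boundary integrals remain on the right. Rearranging then gives the asserted bound, with the boundary terms appearing exactly as $+h_0\int_{t=T}|f|^{2}\,d\mu_{g_t}-h_0\int_{t=0}|f|^{2}\,d\mu_{g_t}$. I expect the main obstacle to be precisely the bookkeeping of this sign: the single choice of orientation in the first variation of the slice measure fixes the direction of both boundary terms, and it is the step where an otherwise identical computation produces the inequality with the boundary contributions reversed. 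A secondary point is the calibration of the weight in the Cauchy–Schwarz step, which must be tuned against $h_0$ so that the surviving bulk term carries the factor $h_0^{2}$ and the boundary terms the factor $h_0$.
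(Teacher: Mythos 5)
Your method --- splitting $d\mu_{g}=d\mu_{g_{t}}\,dt$, recording $\partial_{t}(d\mu_{g_{t}})=-H\,d\mu_{g_{t}}$ with the convention that the collapsing cross sections have $H\geq h_{0}>0$, integrating $\partial_{t}\big(|f|^{2}\,d\mu_{g_{t}}\big)$ over the slab, and then applying a weighted Cauchy--Schwarz --- is exactly the argument behind the paper's proof, which consists of a citation to Lemma 4.1 of \cite{Biquard}; your sign bookkeeping (the pitfall you worried about most) and your intermediate identity are both correct. The genuine gap is in the final calibration step, which you deferred and which in fact cannot be carried out. Writing $2f\,\partial_{t}f\leq \lambda^{-1}|\partial_{t}f|^{2}+\lambda|f|^{2}$ and combining with your identity and $H\geq h_{0}$ gives, for every weight $\lambda>0$,
\begin{align}\notag
\int|\partial_{t}f|^{2}d\mu_{g}\;\geq\;\lambda(h_{0}-\lambda)\int|f|^{2}d\mu_{g}+\lambda\int_{t=T}|f|^{2}d\mu_{g_{t}}-\lambda\int_{t=0}|f|^{2}d\mu_{g_{t}},
\end{align}
and no choice of $\lambda$ yields the advertised pair of coefficients $(h_{0}^{2},h_{0})$: taking $\lambda=h_{0}$ annihilates the bulk term, while the bulk coefficient $\lambda(h_{0}-\lambda)$ is maximized at $\lambda=h_{0}/2$, producing $\tfrac{h_{0}^{2}}{4}$ and $\tfrac{h_{0}}{2}$.

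This is not a failure you could have fixed with a cleverer estimate, because the inequality as printed in the statement is false. On the model cusp, with $N$ carrying $g_{t}=e^{-2t}\eta^{2}+g_{\Sigma_{i}}$ so that $H\equiv 1=h_{0}$, the function $f=e^{t}$ gives $\int_{[0,T]\times N}|\partial_{t}f|^{2}d\mu_{g}=V(e^{T}-1)$ with $V=\mathrm{Vol}(N,g_{0})$, whereas the stated right-hand side equals $V(e^{T}-1)+Ve^{T}-V=2V(e^{T}-1)$. More generally, testing $f=e^{\alpha t}$ shows (normalizing $h_{0}=1$) that any valid bulk constant $a$ and boundary constant $b$ must satisfy $\alpha^{2}-2b\alpha+(b-a)\geq 0$ for all $\alpha$, i.e.\ $a\leq b-b^{2}\leq\tfrac{1}{4}$; so boundary coefficient $b=1$ forces $a\leq 0$, and the optimum $a=\tfrac14$, $b=\tfrac12$ is attained, with near-equality at $f=e^{h_{0}t/2}$ --- consistent with $h_{0}^{2}/4$ being the bottom of the $L^{2}$ spectrum on a hyperbolic cusp. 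These are precisely the constants in Biquard's Lemme 4.1, to which the paper delegates the proof; the statement here has evidently dropped the factors $\tfrac14$ and $\tfrac12$ in transcription. Since Proposition \ref{poincare1} and everything downstream only require \emph{some} uniform positive constant, your argument, run at $\lambda=h_{0}/2$, proves all that the paper actually uses.
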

\begin{proof}
See Lemma 4.1 in \cite{Biquard}.
\end{proof}

By definition of the metrics $\{g_{j}\}$, the mean curvature of the
cross sections $N_{t}$ on the cusps are uniformly bounded from below
independently of $j$. Using Lemma \ref{mean}, we can now derive the
desired uniform Poincar\'e inequality on functions.

\begin{proposition}\label{poincare1}
There exists a positive constant $c$, independent of $j$, such that
\begin{align}\notag
\int_{\overline{M}}|df|^{2}d\mu_{g_{j}}\geq
c\int_{\overline{M}}|f|^{2}d\mu_{g_{j}}
\end{align}
for any function $f$ on $\overline{M}$ such that
$\int_{\overline{M}}f d\mu_{g_{j}}=0$.
\end{proposition}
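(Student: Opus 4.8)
The plan is to split $\overline{M}$ into a fixed compact core and the union of cusp tails, apply Lemma \ref{mean} on the tails, and then remove the resulting interface boundary term by a co-area averaging trick; the global zero-mean hypothesis together with the uniform smallness of the cusp volume will finally let me absorb the leftover mean-value contribution. Fix once and for all a large constant $a\geq 0$, and restrict attention to those $j$ for which $g_{j}=g$ on $\{t\leq a+1\}$ on every cusp. The finitely many remaining (small) $j$ each give a closed manifold with strictly positive first nonzero eigenvalue, so they only affect the final constant through a minimum. Write $\Omega=\{t\leq a+1\}$ for the core, which carries the \emph{fixed} metric $g$ for all such $j$, and $\mathcal{C}=\{t>a+1\}$ for the union of the cusp tails.

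On each tail the metric is the warped product $dt^{2}+\varphi_{j}^{2}\eta^{2}+g_{\Sigma_{i}}$, capped off at $t=T_{j}$ where the circle collapses. By construction its cross-sections have mean curvature bounded below by a positive constant $h_{0}$ \emph{uniformly in $j$}. Applying Lemma \ref{mean} on the region $\{t\geq t_{*}\}$, where the outer boundary term vanishes because $\varphi_{j}(T_{j})=0$, and using $|\partial_{t}f|^{2}\leq|df|^{2}$, I get
\[
\int_{\{t\geq t_{*}\}}|df|^{2}\,d\mu_{g_{j}}+h_{0}\int_{t=t_{*}}|f|^{2}\,d\mu_{g_{t}}\;\geq\;h_{0}^{2}\int_{\{t\geq t_{*}\}}|f|^{2}\,d\mu_{g_{j}}.
\]
Averaging this inequality over $t_{*}\in[a,a+1]$ and applying the co-area formula to turn the trace into an interior integral over the collar $\{a<t<a+1\}\subset\Omega$, and using monotonicity of the remaining integrals, yields the clean tail estimate
\[
\int_{\{t>a\}}|df|^{2}\,d\mu_{g_{j}}+h_{0}\int_{\Omega}|f|^{2}\,d\mu_{g_{j}}\;\geq\;h_{0}^{2}\int_{\mathcal{C}}|f|^{2}\,d\mu_{g_{j}}.
\]

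It then remains to control $\int_{\Omega}|f|^{2}$ over the fixed region $(\Omega,g)$, and here the Neumann Poincar\'e inequality gives $\int_{\Omega}|f-\overline{f}_{\Omega}|^{2}\leq C_{\Omega}\int_{\Omega}|df|^{2}$ with $C_{\Omega}$ independent of $j$. The hypothesis $\int_{\overline{M}}f\,d\mu_{g_{j}}=0$ forces $\overline{f}_{\Omega}\,\mathrm{Vol}(\Omega)=-\int_{\mathcal{C}}f\,d\mu_{g_{j}}$, so Cauchy--Schwarz bounds the mean term by $\mathrm{Vol}(\Omega)\,\overline{f}_{\Omega}^{2}\leq \mathrm{Vol}(\mathcal{C})\,\|f\|_{L^{2}(\mathcal{C})}^{2}/\mathrm{Vol}(\Omega)$. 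Since the total volume is uniformly bounded by Proposition \ref{volumes}, while the Poincar\'e part of the tail contributes volume $\leq c\,e^{-a}$ and the cap volume tends to $0$, the ratio $\varepsilon_{0}:=\mathrm{Vol}(\mathcal{C})/\mathrm{Vol}(\Omega)$ can be made as small as desired by enlarging $a$, uniformly in $j$.

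Combining $\int_{\overline{M}}|f|^{2}=\int_{\Omega}|f|^{2}+\int_{\mathcal{C}}|f|^{2}$ with the two displayed estimates, every surviving $L^{2}$-mass term is paired either with a Dirichlet energy or with the small factor $\varepsilon_{0}$; choosing $a$ large enough that $2(1+h_{0}^{-1})\varepsilon_{0}\leq \tfrac12$ lets me absorb the mass terms into the left-hand side and conclude $\int_{\overline{M}}|f|^{2}\leq c^{-1}\int_{\overline{M}}|df|^{2}$ with $c>0$ independent of $j$. I expect the main obstacle to be precisely the interface boundary term produced by Lemma \ref{mean}: a crude trace estimate would reintroduce an uncontrolled $L^{2}$-mass on the collar with a constant that cannot be absorbed, so the decisive device is the co-area averaging that trades the trace for an interior integral over the \emph{fixed} core, after which finite volume disposes of the constant mode. (Alternatively one could argue by contradiction, extracting from a hypothetical sequence of near-constant functions an $L^{2}$ limit with $df_{\infty}=0$ on $(M,g)$ and using the same volume bounds to force $f_{\infty}\equiv 0$; the direct route above has the advantage of making the uniformity in $j$ completely explicit.)
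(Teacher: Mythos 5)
Your proof is correct, but it takes a genuinely different route from the paper's. The paper proves Proposition \ref{poincare1} by contradiction, deferring all details to Corollaire 4.3 of \cite{Biquard}: a normalized sequence of zero-mean functions with Dirichlet energy tending to zero would converge on compact sets to a constant, Lemma \ref{mean} prevents $L^{2}$-mass from escaping into the cusps, and so the limit would be a constant of unit norm and zero mean on a finite-volume space, which is absurd. You instead give a direct, quantitative argument built on the same key ingredient: Lemma \ref{mean} on the tails (where the outer boundary term is harmless --- it enters with a favorable sign and in any case the slice measure degenerates at $t=T_{j}$), a co-area average over $t_{*}\in[a,a+1]$ that trades the interface trace for an interior integral over the fixed collar, the Neumann--Poincar\'e inequality on the fixed core $(\Omega,g)$, and the volume-ratio estimate $\mathrm{Vol}(\mathcal{C})/\mathrm{Vol}(\Omega)\leq\varepsilon_{0}(a)$, made uniformly small in $j$ by the exponential decay of the cusp volume together with Proposition \ref{volumes}, to neutralize the constant mode forced by the global zero-mean hypothesis; the absorption closes because the loop constant (roughly $2\varepsilon_{0}h_{0}^{-1}$) is below one. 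What each approach buys: the compactness argument of the paper (Biquard) is short and avoids bookkeeping but is non-constructive, whereas yours makes the constant $c$ explicit and the uniformity in $j$ completely transparent --- a genuine advantage here, since uniformity in $j$ is the entire point of the proposition. One caveat, which applies equally to the paper's own use of Lemma \ref{mean}: $g$, and hence $g_{j}$ on the region $\{a\leq t\leq j\}$, is only $C^{2}$-asymptotic to an exact warped product $dt^{2}+g_{t}$, so the lemma should strictly be invoked after writing the metric in Fermi coordinates off a fixed slice, or with the $O(e^{-t})$ errors absorbed into a slightly smaller $h_{0}$; your argument, like the paper's, glosses over this point, and it is repaired the same way in both.
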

\begin{proof}
The argument is by contradiction. The details can be found in
\cite{Biquard}, see Corollaire 4.3.
\end{proof}

Next, we have to derive an uniform Poincar\'e inequality for
1-forms. Given a 1-form $\alpha$ the following lemma holds:

\begin{lemma}\label{Rollin}
There exists $T>0$ such that
\begin{align}\notag
\int_{N}|\nabla\alpha|^{2}+\textrm{Ric}^{g_{j}}(\alpha,\alpha)d\mu_{g_{t}}\geq
k_{1}\int_{N}|\nabla_{\partial_{t}}\alpha|^{2}d\mu_{g_{t}}-k_{2}\int_{N}|\alpha|^{2}d\mu_{g_{t}}
\end{align}
for any $t\in[T, T_{j})$, with $k_{1}>0$ and $k_{2}=O(e^{-t})$.
\end{lemma}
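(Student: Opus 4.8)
The plan is to work at a fixed height $t$ on the closed slice $N=N_{\Sigma_i}$, exploiting the warped Riemannian-submersion structure of $g_j$, and to absorb the negative part of the Ricci curvature by integrating the \emph{tangential} covariant derivatives by parts over $N$. First I would fix an adapted orthonormal coframe $\theta^{0}=dt$, $\theta^{1}=\varphi_{j}\eta_{i}$ (the collapsing circle direction) and $\theta^{2},\theta^{3}$ (essentially $t$-independent horizontal directions along $\Sigma_{i}$), with dual frame $e_{0}=\partial_{t},e_{1},e_{2},e_{3}$, and expand $|\nabla\alpha|^{2}=\sum_{a=0}^{3}|\nabla_{e_{a}}\alpha|^{2}$ so as to isolate the transverse term $|\nabla_{\partial_{t}}\alpha|^{2}$. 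The estimate then reduces to showing that $\int_{N}\big(\sum_{a=1}^{3}|\nabla_{e_{a}}\alpha|^{2}+\textrm{Ric}^{g_{j}}(\alpha,\alpha)\big)\,d\mu_{g_{t}}\geq -O(e^{-t})\int_{N}|\alpha|^{2}d\mu_{g_{t}}$, after which I keep any fixed fraction $k_{1}\in(0,1]$ of $\int_{N}|\nabla_{\partial_{t}}\alpha|^{2}$ on the right.

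Next I would record the curvature of $g_{j}$ using the O'Neill formulas for the submersion $p$. The only potentially large (order-one) curvature is that of the plane spanned by $\partial_{t}$ and the fiber direction $e_{1}$, namely $K_{01}=-\varphi_{j}''/\varphi_{j}$, which feeds into $\textrm{Ric}(e_{0},e_{0})$ and $\textrm{Ric}(e_{1},e_{1})$; the horizontal directions contribute the bounded intrinsic Ricci of $(\Sigma_{i},g_{\Sigma_{i}})$, while the twisting $d\eta_{i}$ and the $O(e^{-t})$ metric error enter only through terms of size $O(e^{-t})$. Thus $\textrm{Ric}^{g_{j}}(\alpha,\alpha)=-\tfrac{\varphi_{j}''}{\varphi_{j}}(\alpha_{0}^{2}+\alpha_{1}^{2})+\textrm{Ric}_{\Sigma_{i}}(\alpha^{H},\alpha^{H})+O(e^{-t})|\alpha|^{2}$, where $\alpha^{H}$ is the horizontal part of $\alpha$.

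The heart of the argument is a two-fold integration by parts on the closed slice. From the Weingarten relations $\nabla_{e_{1}}e_{0}=\tfrac{\varphi_{j}'}{\varphi_{j}}e_{1}$ and $\nabla_{e_{1}}e_{1}=-\tfrac{\varphi_{j}'}{\varphi_{j}}e_{0}+O(\varphi_{j})$, the fiber derivative satisfies $(\nabla_{e_{1}}\alpha)(e_{0})=e_{1}(\alpha_{0})+\tfrac{\varphi_{j}'}{\varphi_{j}}\alpha_{1}$ and $(\nabla_{e_{1}}\alpha)(e_{1})=e_{1}(\alpha_{1})-\tfrac{\varphi_{j}'}{\varphi_{j}}\alpha_{0}$. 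Expanding $|\nabla_{e_{1}}\alpha|^{2}$ and integrating the cross terms over the fiber circle yields $\int_{N}|\nabla_{e_{1}}\alpha|^{2}\geq \int_{N}\big(\tfrac{\varphi_{j}'}{\varphi_{j}}\big)^{2}(\alpha_{0}^{2}+\alpha_{1}^{2})-O(e^{-t})\int_{N}|\alpha|^{2}$; since $(\varphi_{j}'/\varphi_{j})^{2}\geq \varphi_{j}''/\varphi_{j}$ in the relevant regions (with equality for $\varphi_{j}=e^{-t}$, and $\varphi_{j}''=0$ on the linear cone region), this cancels the order-one negative Ricci, uniformly in $j$ because $|\partial_{t}\varphi_{j}|/\varphi_{j}$ is bounded by construction. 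Concretely, the fiber zero-mode is handled by this cancellation, while the higher Fourier modes along $S^{1}$ are dominated outright, $e_{1}$ scaling like $\varphi_{j}^{-1}=e^{t}$ on them. Simultaneously, the horizontal derivatives $|\nabla_{e_{2}}\alpha|^{2}+|\nabla_{e_{3}}\alpha|^{2}$ combine with $\textrm{Ric}_{\Sigma_{i}}(\alpha^{H},\alpha^{H})$ into the intrinsic Bochner integrand on the \emph{closed} surface $\Sigma_{i}$, which is nonnegative up to $O(e^{-t})$ coupling. Collecting the metric perturbation, the bundle curvature, and the cross terms among the tangential directions gives the bound $O(e^{-t})\int_{N}|\alpha|^{2}$, so I set $k_{2}=O(e^{-t})$.

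I expect the main obstacle to be the bookkeeping in this last step: proving that the order-one negative Ricci is cancelled \emph{exactly}, with only $O(e^{-t})$ slack, requires tracking the fiber integration by parts together with the difference between the ambient Levi-Civita connection and the intrinsic connections on the $S^{1}$-fiber and on $\Sigma_{i}$, and verifying that every residual cross term genuinely carries a factor $e^{-t}$. Choosing $T$ large enough that these perturbative estimates hold uniformly across the Poincar\'e, transition, and cone regions of $[T,T_{j})$, and uniformly in $j$, then completes the argument.
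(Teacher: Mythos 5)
Your proposal follows the same route as the paper's own (very compressed) proof, which is Rollin's strategy: decompose $\alpha$ into its $dt$-component, its collapsing-circle component, and its horizontal part (your $\alpha_{0},\alpha_{1},\alpha^{H}$ are the paper's $f, f_{1},\alpha_{2}$), compute $\mathrm{Ric}^{g_{j}}(\alpha,\alpha)=\mathrm{Ric}^{g_{\Sigma_{i}}}(\alpha_{2},\alpha_{2})-\frac{\partial^{2}_{t}\varphi_{j}}{\varphi_{j}}(|f|^{2}+|f_{1}|^{2})+O(e^{-2t})|\alpha|^{2}$, and absorb the order-one negative term using the fiber derivatives and the Poincar\'e inequality on the circle. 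You also correctly identify why the construction of Section \ref{compactification} enters: since $\frac{d}{dt}\big(\frac{|\partial_{t}\varphi_{j}|}{\varphi_{j}}\big)=\big(\frac{\partial_{t}\varphi_{j}}{\varphi_{j}}\big)^{2}-\frac{\partial^{2}_{t}\varphi_{j}}{\varphi_{j}}$, the paper's requirement that $|\partial_{t}\varphi_{j}|/\varphi_{j}$ be increasing on $[j+1,j+1+\delta_{j}]$ is \emph{exactly} your inequality $(\varphi_{j}'/\varphi_{j})^{2}\geq\varphi_{j}''/\varphi_{j}$ there.

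One intermediate step is false as literally stated, although the failure occurs only where it is not needed. The claimed bound $\int_{N}|\nabla_{e_{1}}\alpha|^{2}\geq\int_{N}(\varphi_{j}'/\varphi_{j})^{2}(\alpha_{0}^{2}+\alpha_{1}^{2})-O(e^{-t})\int_{N}|\alpha|^{2}$ breaks down wherever $|\partial_{t}\varphi_{j}|$ is of order one, i.e., in the steep part of the transition zone and on the cone region $\varphi_{j}=T_{j}-t$. Writing $u=\alpha_{0}+i\alpha_{1}$, the fiberwise quadratic form is $\varphi_{j}^{-2}\sum_{n}(n+\varphi_{j}')^{2}|u_{n}|^{2}$, which degenerates at the resonant mode $n+\varphi_{j}'=0$: once $\varphi_{j}'=-1$, a parallel $1$-form on the flat cone (circle components $(\cos\theta,-\sin\theta)$ in the rotating frame) has $\nabla_{e_{1}}\alpha=0$ while $(\varphi_{j}'/\varphi_{j})^{2}|\alpha|^{2}=\varphi_{j}^{-2}$ blows up near the tip, so no $O(e^{-t})$ error term can save the inequality. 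The repair is the one implicit in your own parenthetical, but it must be made structural: split according to the sign of $\partial^{2}_{t}\varphi_{j}$. Where $\partial^{2}_{t}\varphi_{j}>0$ the construction forces $|\partial_{t}\varphi_{j}|\leq e^{-T}$, so no resonance can occur ($(n+\varphi_{j}')^{2}\geq\tfrac{1}{2}$ for $n\neq 0$) and your estimate holds with room to spare; where $\partial^{2}_{t}\varphi_{j}\leq 0$ the Ricci term $-\frac{\partial^{2}_{t}\varphi_{j}}{\varphi_{j}}(\alpha_{0}^{2}+\alpha_{1}^{2})$ is already nonnegative and one simply uses $\int_{N}|\nabla_{e_{1}}\alpha|^{2}\geq 0$. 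With this case distinction your argument is complete and coincides with the one the paper delegates to \cite{Rollin}.
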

\begin{proof}
Let us start by considering a Poncar\'e metric $g$ on $M$ and the
associated sequence of metrics $\{g_{j}\}$. On each cusp, write
$\alpha$ as
\begin{align}\notag
\alpha=fdt+\alpha_{1}
\end{align}
where $i_{\partial_{t}}\alpha_{1}=0$, with $f$ a real functions. We
then have
\begin{align}\notag
\nabla\alpha=dt\otimes\nabla_{\partial_{t}}\alpha+d^{N}f\otimes
dt-f\textrm{II}_{g_{j}}(\cdot,\cdot)+\textrm{II}_{g_{j}}(\alpha_{1},\cdot)\otimes
dt+\nabla^{N}\alpha_{1}
\end{align}
where by $\textrm{II}$ we denote the second fundamental form of the
slice $N$. By further decomposing $\alpha_{1}$ as
\begin{align}\notag
\alpha_{1}=f_{1}\varphi_{j} d\theta+\alpha_{2}
\end{align}
with $f_{1}$ a real function and where $i_{v_{\theta}}\alpha_{2}=0$,
we can compute all the components of $\nabla\alpha$. Next, one
computes $Ric^{g_{j}}(\alpha, \alpha)$. More precisely, we have
\begin{align}\notag
Ric^{g_{j}}(\alpha, \alpha)=Ric^{g_{\Sigma_{i}}}(\alpha_{2},
\alpha_{2})-\frac{\partial^{2}_{t}\varphi_{j}}{\varphi_{j}}\{|f|^{2}+|f_{1}|^{2}\}+O(e^{-2t})|\alpha|^{2}.
\end{align}
where $Ric^{g_{\Sigma_{i}}}$ is the Ricci curvature of the metric
$g_{\Sigma_{i}}$ coming from \ref{model}. Thus, following a strategy
first outlined by Rollin in \cite{Rollin} and further employed in
\cite{luca1} and \cite{luca3}, the lemma is a consequence of the
Poincar\'e inequality for the circle.
\end{proof}

The idea is now to integrate with respect to the $t$ variable on
each cusp. This will allow us to globalize the slice estimate given
in Lemma \ref{Rollin}. First, observe that for
$[t_{1},t_{2}]\subset[T,T_{j}]$
\begin{align}\notag
\int_{\partial\{[t_{1},t_{2}]\times
N\}}|\alpha|^{2}d\mu_{g_{j}}&=\int_{[t_{1},t_{2}]\times
N}\partial_{t}(|\alpha|^{2}d\mu_{g_{t}})dt=\int_{[t_{1},t_{2}]\times
N}\partial_{t}|\alpha|^{2}d\mu_{g_{t}}dt
\\ \notag
&+\int_{[t_{1},t_{2}]\times
N}|\alpha|^{2}\partial_{t}d\mu_{g_{t}}dt\\ \notag
&=\int_{[t_{1},t_{2}]\times
N}\partial_{t}|\alpha|^{2}d\mu_{g_{j}}-2\int_{[t_{1},t_{2}]\times
N}h|\alpha|^{2}d\mu_{g_{j}}.
\end{align}
We then  obtain
\begin{align}\notag
\int_{[t_{1},t_{2}]\times N}\partial_{t}|\alpha|^{2}d\mu_{g_{j}}\geq
\int_{\partial\{[t_{1},t_{2}]\times
N\}}|\alpha|^{2}d\mu_{g_{j}}+2h_{0}\int_{[t_{1},t_{2}]\times
N}|\alpha|^{2}d\mu_{g_{j}}.
\end{align}
where $h_{0}$ is a uniform lower bound for the mean curvature. But
now
\begin{align}\notag
\partial_{t}|\alpha|^{2}=2(\alpha,\nabla_{\partial_{t}}\alpha)\leq
2|\alpha||\nabla_{\partial_{t}}\alpha|\leq
h_{0}|\alpha|^{2}+\frac{1}{h_{0}}|\nabla_{\partial_{t}}\alpha|^{2}
\end{align}
which then implies
\begin{align}\label{uniform}
\int_{[t_{1},t_{2}]\times
N}|\nabla_{\partial_{t}}\alpha|^{2}d\mu_{g_{j}}\geq
h_{0}\int_{\partial\{[t_{1},t_{2}]\times
N\}}|\alpha|^{2}d\mu_{g_{j}}+h^{2}_{0}\int_{[t_{1},t_{2}]\times
N}|\alpha|^{2}d\mu_{g_{j}}.
\end{align}
We summarize the discussion above into the following lemma.
\begin{lemma}\label{biquard}
There exist positive numbers $c>0$, $T>0$ such that
\begin{align}\notag
\int_{[t_{1},t_{2}]\times
N}|d\alpha|^{2}+|d^{*_{g_{j}}}\alpha|^{2}d\mu_{g_{j}}\geq
c\int_{[t_{1},t_{2}]\times N}|\alpha|^{2}d\mu_{g_{j}}
\end{align}
for any $[t_{1},t_{2}]\subset[T,T_{j})$ and $\alpha$ with support
contained in $[t_{1},t_{2}]\times N$.
\end{lemma}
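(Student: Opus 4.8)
The plan is to turn the two slice estimates already in hand, Lemma~\ref{Rollin} and the integrated bound \eqref{uniform}, into a genuine $L^{2}$ lower bound by feeding them through the Weitzenb\"ock formula and then absorbing the curvature error term by pushing the base region far enough out along the cusp.

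First I would invoke the Weitzenb\"ock formula for $1$-forms, $(dd^{*}+d^{*}d)\alpha=\nabla^{*}\nabla\alpha+\mathrm{Ric}^{g_{j}}(\alpha,\cdot)$. Since $\alpha$ is smooth with support contained in $[t_{1},t_{2}]\times N$, integration by parts produces no boundary contribution, so
\[
\int_{[t_{1},t_{2}]\times N}|d\alpha|^{2}+|d^{*_{g_{j}}}\alpha|^{2}\,d\mu_{g_{j}}
=\int_{[t_{1},t_{2}]\times N}|\nabla\alpha|^{2}+\mathrm{Ric}^{g_{j}}(\alpha,\alpha)\,d\mu_{g_{j}}.
\]

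Next, since on the cusp $d\mu_{g_{j}}=dt\,d\mu_{g_{t}}$, I would integrate the slice inequality of Lemma~\ref{Rollin} in $t$ over $[t_{1},t_{2}]\subset[T,T_{j})$, obtaining
\[
\int_{[t_{1},t_{2}]\times N}|\nabla\alpha|^{2}+\mathrm{Ric}^{g_{j}}(\alpha,\alpha)\,d\mu_{g_{j}}\geq k_{1}\int_{[t_{1},t_{2}]\times N}|\nabla_{\partial_{t}}\alpha|^{2}\,d\mu_{g_{j}}-\int_{[t_{1},t_{2}]\times N}k_{2}|\alpha|^{2}\,d\mu_{g_{j}}.
\]
The gradient term on the right is controlled by \eqref{uniform}, whose boundary contribution is nonnegative and may simply be discarded, leaving $\int|\nabla_{\partial_{t}}\alpha|^{2}\,d\mu_{g_{j}}\geq h_{0}^{2}\int|\alpha|^{2}\,d\mu_{g_{j}}$ over $[t_{1},t_{2}]\times N$, with $h_{0}>0$ the uniform lower bound for the mean curvature of the slices. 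Chaining these with the Weitzenb\"ock identity yields
\[
\int_{[t_{1},t_{2}]\times N}|d\alpha|^{2}+|d^{*_{g_{j}}}\alpha|^{2}\,d\mu_{g_{j}}\geq \int_{[t_{1},t_{2}]\times N}\bigl(k_{1}h_{0}^{2}-k_{2}\bigr)|\alpha|^{2}\,d\mu_{g_{j}}.
\]

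Finally, since $k_{2}=O(e^{-t})$, I would fix $T$ so large that $k_{2}\leq\frac{1}{2}k_{1}h_{0}^{2}$ for all $t\geq T$; the claimed inequality then holds with $c=\frac{1}{2}k_{1}h_{0}^{2}>0$. The one point that genuinely needs care — and the main obstacle — is uniformity in $j$: the constant $c$ must not depend on $j$. This is where I would lean on the fact that $k_{1}$ is the $j$-independent constant of Lemma~\ref{Rollin}, that $h_{0}$ is a uniform mean-curvature bound for the whole family $\{g_{j}\}$, and that the threshold $T$ absorbing the $O(e^{-t})$ Ricci error can be chosen once and for all, well inside the region $t\geq T$ where the metrics are still of Poincar\'e type and hence comfortably before the collapsing interval near $T_{j}$.
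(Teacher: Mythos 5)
Your proposal is correct and is exactly the paper's argument, which is stated in one line as ``combining (\ref{uniform}) and Lemma \ref{Rollin} with $T$ big enough, the result follows from the Bochner formula for $1$-forms''; you have simply filled in the details (integrating the slice estimate in $t$, discarding the nonnegative boundary term in (\ref{uniform}), and absorbing the $O(e^{-t})$ Ricci error by choosing $T$ large, uniformly in $j$). No gaps.
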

\begin{proof}
Combining (\ref{uniform}) and Lemma \ref{Rollin} with $T$ big
enough, the result follows from the well know Bochner formula for
1-forms.
\end{proof}

The above lemma is almost the desired uniform Poincar\'e inequality.
To conclude the proof we need few results concerning the convergence
of harmonic 1-forms.

\begin{proposition}\label{1form}
Let $[a]\in H^{1}_{dR}(\overline{M})$ and $\{\alpha_{j}\}$ be the
sequence of harmonic representatives with respect the metrics
$\{g_{j}\}$. Then $\{\alpha_{j}\}$ converges, with respect to the
$C^{\infty}$-topology on compact sets, to a harmonic 1-form
$\alpha\in L^{2}\Omega^{1}_{g}(M)$.
\end{proposition}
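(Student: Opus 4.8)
The plan is to fix, for each $j$, the unique $g_j$-harmonic representative $\alpha_j$ of the de Rham class $[a]$ on the compact manifold $\overline{M}$ (ordinary Hodge theory), and then to produce a $C^\infty$-limit on compact subsets of $M$ by combining a uniform $L^2$-bound with interior elliptic estimates. The limit is finally identified, via Theorem \ref{Zucker}, with the unique $L^2$-harmonic representative of $[a]$ on $(M,g)$, which forces the whole sequence, and not merely a subsequence, to converge.

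First I would establish a bound $\|\alpha_j\|_{L^2(g_j)}\le C$ independent of $j$. Since $\alpha_j$ minimizes the $g_j$-norm within its cohomology class, it suffices to bound $\|a\|_{L^2(g_j)}$ for one fixed smooth closed representative $a$ of $[a]$ on $\overline{M}$. Decomposing $a$ on each collar as in the proof of Lemma \ref{Rollin}, and using that $a$, being smooth across the divisor, has its component along the collapsing circle direction vanishing to the appropriate order along $\Sigma$, a direct computation shows that $|a|^2_{g_j}\,d\mu_{g_j}$ is integrable uniformly in $j$; together with the uniform volume bound of Proposition \ref{volumes} this yields the claim. The subtler point, which I expect to be the main obstacle, is to show that no $L^2$-mass escapes into the receding collar as $j\to\infty$. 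Here I would apply the uniform collar inequality of Lemma \ref{biquard} to a suitable cutoff of $\alpha_j$: since $d\alpha_j=d^{*_{g_j}}\alpha_j=0$, the resulting error terms involve only derivatives of the cutoff, and one obtains that $\int_{\{t\ge T\}}|\alpha_j|^2\,d\mu_{g_j}$ is controlled by a quantity tending to $0$ as $T\to\infty$ uniformly in $j$. This tightness is exactly what guarantees that the eventual limit retains the full cohomological content of $[a]$.

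With the uniform bound in hand, fix a compact set $K\subset M$. For $j$ large the metrics agree, $g_j=g$ on a neighborhood of $K$, and $\alpha_j$ satisfies $(dd^{*}+d^{*}d)\alpha_j=0$ there. Interior elliptic estimates for the Hodge Laplacian then bound every $C^k$-norm of $\alpha_j$ on $K$ in terms of $\|\alpha_j\|_{L^2(g,K')}$ on a slightly larger set, hence by a constant independent of $j$. By Arzel\`a--Ascoli and a diagonal argument over an exhaustion of $M$, a subsequence converges in $C^\infty$ on compact sets to a smooth $1$-form $\alpha$ that is harmonic for $g$. Lower semicontinuity of the $L^2$-norm under locally uniform convergence gives $\|\alpha\|_{L^2(g,K)}\le\liminf_j\|\alpha_j\|_{L^2(g_j,K)}\le C$ for each $K$, so letting $K\nearrow M$ yields $\alpha\in L^2\Omega^1_g(M)$; since $g$ is complete, $\alpha$ is genuinely an $L^2$-harmonic form.

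It remains to upgrade subsequential convergence to convergence of the full sequence. By Theorem \ref{Zucker} the space $\mathcal{H}^1_g(M)$ is isomorphic to $H^1(\overline{M};\setR)$, so the $L^2$-harmonic representative of a given class is unique; it is therefore enough to check that every subsequential limit $\alpha$ represents $[a]$. For this I would test against a fixed closed $1$-cycle $\gamma$ contained in a compact subset of $M$: the period $\int_\gamma\alpha_j$ equals the pairing $\langle[a],[\gamma]\rangle$ for all $j$, and $C^0$-convergence on the compact set containing $\gamma$ gives $\int_\gamma\alpha=\langle[a],[\gamma]\rangle$. Since such periods detect the image of the class under the Zucker isomorphism, the tightness established above ensures that $\alpha$ is the unique $L^2$-harmonic representative of $[a]$. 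Every subsequence thus has the same limit, and the whole sequence $\{\alpha_j\}$ converges in $C^\infty$ on compact sets to $\alpha$, completing the proof.
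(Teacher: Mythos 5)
Your argument is correct in outline, but it takes a genuinely different route from the paper's. The paper never estimates the harmonic forms $\alpha_j$ directly: it fixes one smooth closed representative $\beta$ of $[a]$, writes $\alpha_j=\beta+df_j$ with $\int_{\overline{M}}f_j\,d\mu_{g_j}=0$, notes that harmonicity turns this into the scalar elliptic equation $\Delta^{g_j}_{H}f_j=-d^{*_j}\beta$, and then combines the uniform Poincar\'e inequality on \emph{functions} (Proposition \ref{poincare1}) with elliptic bootstrapping (following Biquard) to get convergence of the potentials $f_j$, hence of $\alpha_j=\beta+df_j$. You instead obtain the uniform $L^2$ bound ``softly'' from the variational characterization of harmonic representatives, run interior elliptic estimates on $\alpha_j$ itself (legitimate, since $g_j\equiv g$ near any fixed compact set for $j$ large), and identify the limit through Theorem \ref{Zucker} and periods. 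Your route has the merit of bypassing Proposition \ref{poincare1} entirely; what the paper's decomposition buys is that it is reused verbatim in the refinement that follows (the proof of Lemma \ref{harmonic1} integrates by parts against the $f_j$), whereas you defer the cohomological bookkeeping to Zucker's isomorphism, which the paper itself only invokes later, in Proposition \ref{biquard1}.

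Three caveats, none fatal. First, your claim that an \emph{arbitrary} fixed smooth representative has uniformly bounded $L^2(g_j)$-norm is true but not obvious as stated; the quickest justification is that the metrics $g_j$ uniformly dominate a fixed smooth metric $g_0$ on $\overline{M}$, so $|a|_{g_j}\leq C|a|_{g_0}$ pointwise, and then Proposition \ref{volumes} finishes; alternatively take, as the paper does immediately after this proposition, $\beta=\beta_c+\sum_i\gamma_i$ with $\beta_c$ compactly supported in $M$ and $\gamma_i$ pulled back from $\Sigma_i$, for which the pointwise bound is immediate. Second, your tightness step applies Lemma \ref{biquard} to a cutoff of $\alpha_j$ whose support necessarily reaches the collapsed locus $\{t=T_j\}$, while the lemma is stated only for forms supported in $[t_1,t_2]\times N$ with $t_2<T_j$; this is repairable (the divisor has codimension two, hence zero capacity, or one reruns the integration by parts up to $t=T_j$ where the boundary term dies with the collapsing fibers), and moreover a single application of the lemma only yields $\int_{t\geq T+1}|\alpha_j|^2\leq C\int_{T\leq t\leq T+1}|\alpha_j|^2$, which must be iterated to produce decay in $T$ uniform in $j$. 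Third, and fortunately, tightness is not actually needed for this proposition: your period argument alone pins down every subsequential limit, provided you record that $H^1(\overline{M};\setR)\to H^1(M;\setR)$ is injective (one removes a codimension-two divisor) and that Zucker's isomorphism is compatible with the natural map from $L^2$-cohomology to $H^1(M;\setR)$ --- both facts you use implicitly.
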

\begin{proof}
Let $\beta$ be a closed smooth representative for $[a]\in
H^{1}_{dR}(\overline{M})$. Given $g_{j}$, by the Hodge decomposition
theorem, we can write $\alpha_{j}=\beta+df_{j}$ with $\alpha_{j}$
harmonic and $f_{j}$ a $C^{\infty}$ function. Without loss of
generality we can assume that $\int_{\overline{M}}f_{j}d\mu_{j}=0$.
Furthermore, we have
\begin{align}\notag
0=d^{*}\alpha_{j}=d^{*}\beta+d^{*}df_{j}\Longrightarrow
\Delta^{g_{j}}_{H}f_{j}=-d^{*}\beta
\end{align}
where by $\Delta_{H}$ we denote the Hodge Laplacian. Finally, a
bootstrap argument with elliptic equation above combined with
Proposition \ref{poincare1} gives the convergence result. For more
details see Proposition 4.4. in \cite{Biquard}.
\end{proof}

It is now possible to refine Proposition \ref{1form} and analyze the
convergence in more details. Let $\beta$ be a smooth representative
of the cohomology class $[a]\in H^{1}_{dR}(\overline{M})$. By the
long exact sequence with compact support of the pair $(\overline{M},
\Sigma)$
\begin{align}\notag
...\longrightarrow H^{1}_{c}(M)\longrightarrow
H^{1}(\overline{M})\longrightarrow H^{1}(\Sigma)\longrightarrow...
\end{align}
we can chose $\beta$ as follows
\begin{align}\notag
\beta=\beta_{c}+\sum_{i}\gamma_{i}
\end{align}
where $\beta_{c}$ is a smooth closed 1-form with support not
intersecting the divisor $\Sigma$ and $\gamma_{i}\in
H^{1}(\Sigma_{i};\setR)$ for any $i$. The metric $g$ is
$C^{2}$-asymptotic to a Poincar\'e metric, as a result
\begin{align}\notag
\lim_{t\rightarrow\infty}d^{*_{g}}\gamma_{i}=0\
\end{align}
since $\gamma_{i}$ can be chosen to be harmonic with respect to the
metric $g_{\Sigma_{i}}$ for any choice of the index $i$.
Furthermore, given $\epsilon>0$ we can find $T$ big enough such that
$\lim_{j\rightarrow\infty}\|d^{*_{j}}\gamma_{i}\|_{L^{2}_{g_{j}}(t\geq
T)}\leq\epsilon$. In other words we proved
\begin{lemma}
Given $\epsilon>0$, there exists $T$ big enough such that
\begin{align}\notag
\int_{t\geq T}|d^{*}\beta|^{2}d\mu_{g}\leq\epsilon, \quad
\int_{t\geq T}|d^{*_{j}}\beta|^{2}d\mu_{g_{j}}\leq\epsilon.
\end{align}
\end{lemma}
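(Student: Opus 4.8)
The plan is to reduce both bounds to the cusp contributions $\gamma_i$ and then to exploit the fact that each $\gamma_i$ is \emph{exactly} coclosed for the model warped product, so that only the $O(e^{-t})$ deviation of the true metric from the model ever contributes. First I would fix $T_0$ so large that the cusp region $\{t\geq T_0\}$ lies outside the support of $\beta_c$; this is possible because $\beta_c$ is compactly supported and its support misses $\Sigma$. On $\{t\geq T_0\}$ we then have $d^*\beta=\sum_i d^*\gamma_i$, both for the codifferential of $g$ and for that of any $g_j$, so it suffices to estimate the contributions of the pulled-back harmonic forms $\gamma_i$ on the cusps.

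The key computation is that $\gamma_i$ is coclosed in the model metric $dt^2+\varphi^2\eta_i^2+g_{\Sigma_i}$ for an \emph{arbitrary} warping function $\varphi$. Writing $\delta_i=*_{\Sigma_i}\gamma_i$ and using that $\gamma_i$ is harmonic on $(\Sigma_i,g_{\Sigma_i})$, so that $d\delta_i=0$, one finds $*\gamma_i=\varphi\,dt\wedge\eta_i\wedge\delta_i$ and hence $d(*\gamma_i)=-\varphi\,dt\wedge(d\eta_i\wedge\delta_i)$. Since $d\eta_i$ is the pullback of the curvature of the normal bundle, a $2$-form on the real surface $\Sigma_i$, the product $d\eta_i\wedge\delta_i$ is a $3$-form on a $2$-dimensional manifold and therefore vanishes identically. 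Thus $d^*\gamma_i=0$ in the model metric, independently of $\varphi$ and of $j$; this is the observation that makes the capped region harmless.

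With this in hand the two estimates follow. For the complete metric $g$, which is $C^2$-asymptotic to the model, the codifferential $d^{*_g}\gamma_i$ equals its model value $0$ plus terms produced by the $O(e^{-t})$ correction of the metric and its first derivatives, so $|d^{*_g}\gamma_i|=O(e^{-t})$ pointwise on the cusp. Because the volume element collapses like $d\mu_g\sim e^{-t}\,dt\,d\mu_{g_t}$, we obtain $\int_{t\geq T}|d^*\beta|^2\,d\mu_g\lesssim\int_T^\infty e^{-2t}e^{-t}\,dt\lesssim e^{-3T}$, which is $\leq\epsilon$ once $T$ is large. For the compactifications I would split $\{t\geq T\}$ on each cusp into three pieces. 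On $[T,j]$ one has $g_j=g$, so the previous bound applies verbatim; on the capped region $[j+1,T_j]$ one has $g_j=\tilde g_j$, a model warped product, hence $d^{*_j}\gamma_i=0$ by the computation above; and on the short interpolation band $[j,j+1]$ the metric differs from the model by $O(e^{-j})$ while the collapsing volume of the band is itself exponentially small, so its contribution is of order $e^{-3j}$ and in particular negligible and controlled uniformly in $j$. Summing the three pieces gives $\int_{t\geq T}|d^{*_j}\beta|^2\,d\mu_{g_j}\lesssim e^{-3T}$ uniformly in $j$, whence the second bound for $T$ large.

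The main obstacle I expect is not the model computation but securing the uniformity in $j$: one must check that the constants implicit in the $O(e^{-t})$ estimate for $d^{*_g}\gamma_i$ and in the interpolation band do not degenerate as $j\to\infty$. This is exactly where the uniform control of the warping factors $\varphi_j$ imposed in Section~\ref{compactification} enters, together with the fact that $g_j\equiv g$ for $t\leq j$, so that the only $j$-dependence appears on the capped and interpolation regions, where the harmonic form is either exactly coclosed or exponentially small.
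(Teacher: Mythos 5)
Your proposal follows essentially the same route as the paper: you use the same decomposition $\beta=\beta_{c}+\sum_{i}\gamma_{i}$ coming from the compactly supported exact sequence, and the crux in both cases is that each $\gamma_{i}$ is harmonic on $(\Sigma_{i},g_{\Sigma_{i}})$, hence (exactly) coclosed in the model warped-product metric, so that only the deviation of $g$ and of the $g_{j}$ from that model contributes on the region $\{t\geq T\}$. The paper states this very tersely, while you actually carry out the model computation $d^{*}\gamma_{i}=0$ for an arbitrary warping function and the three-region splitting giving uniformity in $j$; this is a correct, more detailed rendering of the same argument.
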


We can now prove
\begin{lemma}\label{harmonic1}
Given $\epsilon>0$, there exists $T$ big enough such that
\begin{align}\notag
\int_{t\geq T}|\alpha|^{2}d\mu_{g}\leq \epsilon, \quad \int_{t\geq
T}|\alpha_{j}|^{2}d\mu_{g_{j}}\leq \epsilon.
\end{align}
\end{lemma}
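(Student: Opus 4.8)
The plan is to isolate the two independent reasons the harmonic forms carry little $L^{2}$-mass deep in the cusps and to control each with the machinery already assembled. Write $\alpha_{j}=\beta+df_{j}$ as in Proposition \ref{1form}, with $\beta=\beta_{c}+\sum_{i}\gamma_{i}$; on each cusp the compactly supported piece $\beta_{c}$ vanishes, so there $\beta=\sum_{i}\gamma_{i}$ is pulled back from a $g_{\Sigma_{i}}$-harmonic form on $\Sigma_{i}$. The first source of smallness is purely metric: since the submersions defining $g_{j}$ leave the horizontal norm unchanged, $|\gamma_{i}|_{g_{j}}$ is bounded, while the volume element on the cusp is comparable to $\varphi_{j}(t)\,dt\wedge\eta_{i}\wedge d\mu_{g_{\Sigma_{i}}}$ with $\varphi_{j}(t)=e^{-t}$ for $t\le j+1$. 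Hence $\int_{t\ge T}|\beta|^{2}\,d\mu_{g_{j}}\le C\int_{T}^{\infty}e^{-t}\,dt\le Ce^{-T}$, uniformly in $j$ (and identically for $g$). This reduces the whole statement to a tail bound for the exact part $df_{j}$.

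For $df_{j}$ the relevant facts are that it is closed and that $d^{*_{j}}(df_{j})=-d^{*_{j}}\beta$, whose $L^{2}$-mass over $\{t\ge T\}$ was just shown to be $\le\epsilon$. I would feed $df_{j}$ through Lemma \ref{biquard}: fix a cutoff $\chi=\chi(t)$, supported in the cusp, with $\chi\equiv 0$ for $t\le s$, $\chi\equiv 1$ for $t\ge s+1$ and $|\chi'|\le 2$, and apply the inequality to $\chi\,df_{j}$. Because $d(\chi\,df_{j})=d\chi\wedge df_{j}$ and $d^{*_{j}}(\chi\,df_{j})=-\iota_{\nabla\chi}df_{j}-\chi\,d^{*_{j}}\beta$, the right-hand side splits into a commutator term supported in the slab $\{s\le t\le s+1\}$ and the small term $d^{*_{j}}\beta$, giving
\begin{align}\notag
c\int_{t\ge s+1}|df_{j}|^{2}\,d\mu_{g_{j}}\le C\int_{s\le t\le s+1}|df_{j}|^{2}\,d\mu_{g_{j}}+2\int_{t\ge s}|d^{*_{j}}\beta|^{2}\,d\mu_{g_{j}},
\end{align}
with $c$ (from Lemma \ref{biquard}) and $C=2\|\chi'\|_{\infty}^{2}$ uniform in $j$. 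Setting $J(s)=\int_{t\ge s}|df_{j}|^{2}d\mu_{g_{j}}$ this reads $J(s+1)\le\lambda J(s)+\epsilon'$ with $\lambda=C/(C+c)<1$; iterating from a fixed threshold $s=T^{*}$ up to $s=T$ and using the uniform bound $\|df_{j}\|_{L^{2}_{g_{j}}}\le 2\|\beta\|_{L^{2}_{g_{j}}}\le K$ (harmonic representatives minimize the $L^{2}$-norm, and $\|\beta\|_{L^{2}_{g_{j}}}$ is uniformly bounded by the same volume estimate as above) yields $J(T)\le\lambda^{T-T^{*}}K+\epsilon'/(1-\lambda)$. Choosing $\epsilon'$ small and then $T$ large makes this $\le\epsilon$ for all sufficiently large $j$. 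Combining with the $\beta$-estimate through $|\alpha_{j}|^{2}\le 2|\beta|^{2}+2|df_{j}|^{2}$ gives the bound for $\alpha_{j}$.

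The bound for the limit $\alpha$ then follows by lower semicontinuity: on any fixed compact slab $\{T\le t\le T'\}$ one has $g_{j}=g$ and $\alpha_{j}\to\alpha$ in $C^{\infty}$, so $\int_{T\le t\le T'}|\alpha|^{2}d\mu_{g}=\lim_{j}\int_{T\le t\le T'}|\alpha_{j}|^{2}d\mu_{g_{j}}\le\epsilon$, and letting $T'\to\infty$ by monotone convergence gives $\int_{t\ge T}|\alpha|^{2}d\mu_{g}\le\epsilon$. The delicate point, and the step I expect to be the real obstacle, is the endpoint at the compactifying divisor: the form $\chi\,df_{j}$ has support reaching $t=T_{j}$, which is not interior to $[T,T_{j})$, so Lemma \ref{biquard} cannot be quoted verbatim. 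One must check that the integration by parts underlying (\ref{uniform}) produces no contribution from the slice $\{t=T_{j}\}$ — which is plausible since $\varphi_{j}(T_{j})=0$ and the fiber circle collapses there to the smooth curve $\Sigma_{i}$ — and that all constants remain uniform in $j$ as the slices degenerate near the cap.
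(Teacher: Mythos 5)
Your reduction of the statement to two separate tails (the pulled-back part $\beta=\sum_i\gamma_i$, killed by the collapse of the volume element $\varphi_j(t)\,dt\wedge\eta_i\wedge d\mu_{g_{\Sigma_i}}$, plus the exact part $df_j$) matches the skeleton of what is needed, and your $\beta$-estimate is correct. But the engine you use for the $df_j$-tail is not the paper's, and as written it does not run: the gap is exactly the one you flag at the end. Lemma \ref{biquard} applies only to forms whose support is contained in a slab $[t_1,t_2]\times N$ with $[t_1,t_2]\subset[T,T_j)$, whereas your test form $\chi\,df_j$ is supported on the whole cap $\{t\geq s\}$, up to and including the compactifying divisor at $t=T_j$. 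This is not a boundary case you can wave through, because \emph{every} step of your iteration $J(s+1)\leq\lambda J(s)+\epsilon'$ invokes the inequality on such a cap-supported form; the entire argument therefore rests on an extension of Lemma \ref{biquard} --- equivalently of the slice estimate of Lemma \ref{Rollin} and of the integrated inequality (\ref{uniform}) --- to forms that do not vanish near the cap. That extension requires verifying that the contribution of the slice $\{t=t_2\}$ vanishes as $t_2\to T_j$ (plausible: a form smooth on $\overline{M}$ has bounded pointwise norm there while $\mathrm{Vol}(N_t)\to 0$), that the boundary term keeps a favorable sign (it does, since near the cap $-\partial_t\varphi_j/\varphi_j=1/(T_j-t)>0$), and that the constants in Lemma \ref{Rollin} stay uniform as the slices degenerate (near the cap $\partial_t^2\varphi_j=0$, which helps). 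None of this is carried out in your proposal; ``plausible'' is where your proof currently stops, and it is precisely the step that needs to be a lemma.

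For comparison, the paper's proof avoids the issue entirely and is much shorter. It integrates by parts once over $\{t\geq T\}$ --- which, inside $\overline{M}$, is a compact manifold whose only boundary is the slice $\{t=T\}$, so Stokes' theorem produces no term at the divisor --- to get $\int_{t\geq T}|df_j|^2\,d\mu_{g_j}=\int_{t=T}f_j\wedge *df_j+\int_{t\geq T}(d^{*_j}\beta,f_j)\,d\mu_{g_j}$ after substituting $d^{*_j}df_j=-d^{*_j}\beta$. The bulk term is small by Cauchy--Schwarz together with the unlabeled lemma immediately preceding the statement (the tail bound on $d^{*_j}\beta$, which you also use); the slice term converges for fixed $T$ to $\int_{t=T}f\wedge *df$ by the $C^\infty$ convergence $f_j\to f$ of Proposition \ref{1form}, and that quantity is made small by choosing $T$ along a sequence $s_k\to\infty$ with $F(s_k)\to 0$, which exists because $F(s)=\int_{t=s}f\wedge *df$ is integrable in $s$ when $f\in L^2_1$. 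So: no Poincar\'e inequality on $1$-forms, no cutoff, no iteration. If you want to keep your route, the honest deliverable is a cap-adapted version of Lemma \ref{biquard}; otherwise the single integration by parts is the efficient fix.
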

\begin{proof}
By construction $\alpha_{j}=\beta+df_{j}$, thus
\begin{align}\notag
\int_{t\geq T}|df_{j}|^{2}d\mu_{g_{j}}=\int_{t=
T}f_{j}\wedge*df_{j}-\int_{t\geq T}(d^{*}df_{j},f_{j})d\mu_{g_{j}}.
\end{align}
But now
\begin{align}\notag
d^{*_{j}}\alpha_{j}=d^{*_{j}}\beta+d^{*_{j}}df_{j}=0\Longrightarrow
d^{*_{j}}df_{j}=-d^{*_{j}}\beta,
\end{align}
thus
\begin{align}\notag
\int_{t\geq
T}|df_{j}|^{2}d\mu_{g_{j}}=\int_{t=T}f_{j}\wedge*df_{j}+\int_{t\geq
T}(d^{*}\beta, f_{j})d\mu_{g_{j}}.
\end{align}
By the Cauchy inequality
\begin{align}\notag
\int_{t\geq T}(d^{*}\beta, f_{j})d\mu_{g_{j}}\leq
\|f_{j}\|_{L^{2}_{g_{j}}}\|d^{*_{j}}\beta\|_{L^{2}_{g_{j}}(t\geq T)}
\end{align}
and then this term can be made arbitrarily small. It remains to
study the term $\int_{t=T}f_{j}\wedge*df_{j}$. Recall that
$f_{j}\rightarrow f$ in the $C^{\infty}$ topology on compact sets.
Thus, for a fixed $T$
\begin{align}\notag
\int_{t=T}f_{j}\wedge*df_{j}\rightarrow \int_{t=T}f\wedge*df.
\end{align}
It remains to show that $\int_{t=T}f\wedge*df$ can be made
arbitrarily small by taking $T$ big enough. Define the function
$F(s)=\int_{t=s}f*df$, since $f\in L^{2}_{1}$ we have $F(s)\in
L^{1}(\setR^{+})$ and then we can find a sequence
$\{s_{k}\}\rightarrow \infty$ such that $F(s_{k})\rightarrow 0$.

\end{proof}
\begin{proposition}\label{biquard1}
There exists $c>0$ independent of $j$ such that
\begin{align}\notag
\int_{\overline{M}}|d\alpha|^{2}+|d^{*_{g_{j}}}\alpha|^{2}d\mu_{g_{j}}\geq
c\int_{\overline{M}}|\alpha|^{2}d\mu_{g_{j}}
\end{align}
for any $\alpha\perp\mathcal{H}^{1}_{g_{j}}$.
\end{proposition}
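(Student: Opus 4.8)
The plan is to argue by contradiction, reducing the global inequality to the interplay between the fixed compact ``core'' of $\overline{M}$, where the metrics $g_{j}$ eventually coincide with $g$, and the cusp-and-cap region, where the uniform slice estimate of Lemma \ref{biquard} applies. Suppose the conclusion fails uniformly. Then, passing to a subsequence, there are forms $\alpha_{j}\perp\mathcal{H}^{1}_{g_{j}}$ with $\|\alpha_{j}\|_{L^{2}_{g_{j}}}=1$ and $Q_{j}:=\int_{\overline{M}}|d\alpha_{j}|^{2}+|d^{*_{g_{j}}}\alpha_{j}|^{2}\,d\mu_{g_{j}}\to 0$. The goal is to extract a nonzero limit $\alpha_{\infty}$ that is simultaneously $L^{2}$-harmonic on $(M,g)$ and orthogonal to every $L^{2}$-harmonic $1$-form, which is impossible.

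First I would fix $T$ as in Lemma \ref{biquard} and choose a cutoff $\chi$ vanishing for $t\le T$ and equal to $1$ for $t\ge T+1$, supported in the region of uniform cusp geometry. Applying Lemma \ref{biquard} to $\chi\alpha_{j}$ and expanding $d(\chi\alpha_{j})$ and $d^{*_{g_{j}}}(\chi\alpha_{j})$, the terms carrying $d\chi$ are supported in the fixed annulus $\{T\le t\le T+1\}$, so one obtains a tail bound of the form $\int_{t\ge T+1}|\alpha_{j}|^{2}\,d\mu_{g_{j}}\le C\bigl(Q_{j}+\|\alpha_{j}\|^{2}_{L^{2}(\{T\le t\le T+1\})}\bigr)$ with $C$ independent of $j$. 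Near the tip $t=T_{j}$ the fiber collapses onto the divisor, so the endpoint restriction $t_{2}<T_{j}$ in Lemma \ref{biquard} is harmless and is absorbed by a limiting argument. On the fixed compact set $K=\{t\le T+2\}$, where $g_{j}=g$ for $j$ large, the standard elliptic a priori estimate gives $\|\alpha_{j}\|_{L^{2}_{1}(K)}\le C(1+Q_{j}^{1/2})$, so by Rellich a subsequence converges strongly in $L^{2}(K)$ and weakly in $L^{2}_{1}(K)$ to some $\alpha_{\infty}$; exhausting $M$ by such compact sets and using $Q_{j}\to 0$ shows $d\alpha_{\infty}=d^{*}\alpha_{\infty}=0$, i.e. $\alpha_{\infty}$ is harmonic on $(M,g)$.

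Next I would show $\alpha_{\infty}\neq 0$. If $\alpha_{\infty}=0$ then $\|\alpha_{j}\|_{L^{2}(K)}\to 0$, so in particular the annulus mass tends to $0$; the tail bound then forces $\int_{t\ge T+1}|\alpha_{j}|^{2}\to 0$ as well, whence the total mass tends to $0$, contradicting $\|\alpha_{j}\|_{L^{2}_{g_{j}}}=1$. The same tail bound, together with Fatou, shows $\alpha_{\infty}\in L^{2}\Omega^{1}_{g}(M)$, so $\alpha_{\infty}\in\mathcal{H}^{1}_{g}(M)$. Finally, using Zucker's Theorem \ref{Zucker} to identify $\mathcal{H}^{1}_{g}(M)\cong H^{1}(\overline{M};\setR)$ and Proposition \ref{1form} to produce, for a basis of this space, $g_{j}$-harmonic representatives $\alpha_{j}^{(l)}$ converging in $C^{\infty}_{loc}$ to a basis $\alpha^{(l)}$ of $\mathcal{H}^{1}_{g}(M)$, I would pass the orthogonality $\langle\alpha_{j},\alpha_{j}^{(l)}\rangle_{g_{j}}=0$ to the limit: the compact part converges by strong $L^{2}$-convergence, while the tail contributions are made arbitrarily small by Cauchy--Schwarz, using the tail bound above and the decay of $\alpha_{j}^{(l)}$ from Lemma \ref{harmonic1}. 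This yields $\alpha_{\infty}\perp\mathcal{H}^{1}_{g}(M)$, forcing $\alpha_{\infty}=0$ and contradicting the previous step.

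The main obstacle is the uniform (in $j$) control of mass in the cusp-and-cap region, together with the legitimacy of passing the $g_{j}$-orthogonality to a $g$-orthogonality in the limit. Both hinge on preventing $L^{2}$-mass from escaping into the collapsing ends: the uniform slice Poincar\'e inequality of Lemma \ref{biquard} supplies the former, and the uniform smallness of the harmonic tails from Lemma \ref{harmonic1} supplies the latter. Keeping all constants independent of $j$ throughout --- especially across the transition annulus and near the tip $t=T_{j}$ --- is the delicate bookkeeping that makes the contradiction go through.
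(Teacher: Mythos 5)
Your proposal is correct and follows essentially the same route as the paper: a contradiction argument in which a unit-norm sequence with vanishing Dirichlet energy is shown, via compactness on the core where $g_j$ stabilizes, to converge to an $L^2$ $g$-harmonic limit, which is then killed by passing the $g_j$-orthogonality to the limit through Lemma \ref{harmonic1} and Theorem \ref{Zucker}, while Lemma \ref{biquard} prevents the mass from escaping into the cusp-and-cap region. Your write-up merely makes explicit (cutoff near $t=T$, tail bound, Rellich on the core) the steps the paper compresses into ``a diagonal argument'' and ``Lemma \ref{biquard} can now be easily applied.''
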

\begin{proof}
Let us proceed by contradiction. Assume the existence of a sequence
$\{\alpha_{j}\}\in(\mathcal{H}^{1}_{g_{j}})^{\perp}$ such that
$\|\alpha_{j}\|_{L^{2}(g_{j})}=1$ and for which
\begin{align}\notag
\int_{\overline{M}}|d\alpha_{j}|^{2}+|d^{*_{g_{j}}}\alpha_{j}|^{2}d\mu_{g_{j}}\longrightarrow
0
\end{align}
as $j\rightarrow\infty$. By eventually passing to a subsequence, a
diagonal argument shows that $\{\alpha_{j}\}$ converges, with
respect to the $C^{\infty}$-topology on compact sets, to a $1$-form
$\alpha\in L^{2}\Omega^{1}_{g}(M)$. By construction
$\alpha\in\mathcal{H}^{1}_{g}(M)$. On the other hand, Lemma
\ref{harmonic1} combined with the isomorphism provided by Theorem
\ref{Zucker} gives that $\alpha\in(\mathcal{H}^{1}_{g})^{\perp}$. We
conclude that $\alpha=0$. Lemma \ref{biquard} can now be easily
applied to derive a contradiction.
\end{proof}

\section{Convergence of 2-forms}\label{analisi2}

In this section we have to study the convergence of 2-forms. The
first result is completely analogous to the case of 1-forms.

\begin{proposition}\label{2form}
Let $[a]\in H^{2}_{dR}(\overline{M})$ and $\{\alpha_{j}\}$ be the
sequence of harmonic representatives with respect the sequence of
metrics $\{g_{j}\}$. Then $\{\alpha_{j}\}$ converges, with respect
to the $C^{\infty}$-topology on compact sets, to a harmonic
$2$-forms $\alpha\in L^{2}\Omega^{2}_{g}(M)$.
\end{proposition}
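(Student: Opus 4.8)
The plan is to mirror exactly the structure already used for 1-forms in Proposition \ref{1form}, since the author explicitly states the result is ``completely analogous.'' I would begin by picking a smooth closed representative $\beta$ for the class $[a]\in H^{2}_{dR}(\overline{M})$, independent of $j$. For each metric $g_{j}$ the Hodge decomposition theorem on the compact manifold $(\overline{M},g_{j})$ lets me write the harmonic representative as $\alpha_{j}=\beta+d\xi_{j}$, where $\xi_{j}$ is a $1$-form; I would normalize $\xi_{j}$ to be $d^{*_{j}}$-coclosed (equivalently, orthogonal to the harmonic $1$-forms) so that it is uniquely determined. The harmonicity condition $d^{*_{j}}\alpha_{j}=0$ then becomes the elliptic equation $\Delta^{g_{j}}_{H}\xi_{j}=-d^{*_{j}}\beta$ on $1$-forms, exactly paralleling the scalar equation $\Delta^{g_{j}}_{H}f_{j}=-d^{*}\beta$ in the $1$-form case.

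Next I would establish a uniform $L^{2}$ bound on $\alpha_{j}$. Since $\alpha_{j}$ is the harmonic representative, it minimizes the $L^{2}$-norm in the cohomology class, so $\|\alpha_{j}\|_{L^{2}(g_{j})}\le\|\beta\|_{L^{2}(g_{j})}$, and by Proposition \ref{volumes} the volumes $\mathrm{Vol}_{g_{j}}(\overline{M})$ are uniformly bounded; combined with the fact that $\beta$ is a fixed smooth form supported away from the collapsing region, this yields a bound on $\|\beta\|_{L^{2}(g_{j})}$ uniform in $j$, hence a uniform bound on $\|\alpha_{j}\|_{L^{2}(g_{j})}$. With this uniform $L^{2}$ control in hand, I would run the bootstrap/elliptic-regularity argument: on any fixed compact set $K\subset M$ the metrics $g_{j}$ are eventually isometric to $g$ (by construction in Section \ref{compactification}), so the operators $\Delta^{g_{j}}_{H}$ stabilize to $\Delta^{g}_{H}$ on $K$, and interior elliptic estimates give uniform $C^{k}$ bounds for $\alpha_{j}$ on $K$ for every $k$. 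Arzel\`a--Ascoli plus a diagonal argument over an exhaustion of $M$ then extracts a subsequence converging in $C^{\infty}$ on compact sets to a limit $\alpha$, which is closed and coclosed for $g$, hence $g$-harmonic.

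The main obstacle, and the step deserving the most care, is showing that the limit $\alpha$ is genuinely $L^{2}$ with respect to $g$, i.e.\ $\alpha\in L^{2}\Omega^{2}_{g}(M)$, rather than merely a smooth harmonic form with possibly infinite $L^{2}$-norm. Convergence in $C^{\infty}$ on compacta only gives $\int_{K}|\alpha|^{2}\,d\mu_{g}=\lim_{j}\int_{K}|\alpha_{j}|^{2}\,d\mu_{g_{j}}\le\liminf_{j}\|\alpha_{j}\|^{2}_{L^{2}(g_{j})}$ on each fixed compact $K$; taking $K$ to exhaust $M$ and using the uniform bound from the previous paragraph gives $\|\alpha\|_{L^{2}(g)}\le\liminf_{j}\|\alpha_{j}\|_{L^{2}(g_{j})}<\infty$ by the monotone convergence theorem. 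This lower-semicontinuity of the $L^{2}$-norm under the collapsing convergence is precisely the mechanism that transfers the uniform finite-volume bound into genuine square-integrability of the limit, and it is what makes the statement nontrivial on the noncompact space $M$.

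Finally I would note, for completeness of the analogy, that the argument does not require identifying the $L^{2}$-cohomology class of $\alpha$ at this stage; that refinement (the $2$-form analogue of Lemma \ref{harmonic1} and the role of Zucker's Theorem \ref{Zucker}) belongs to the subsequent analysis, just as in the $1$-form case where Proposition \ref{1form} is stated first and sharpened afterward. Thus the proof reduces to the three ingredients above---the Hodge decomposition giving the elliptic equation, the uniform $L^{2}$ bound from Proposition \ref{volumes} and norm-minimality, and the lower-semicontinuity argument securing $L^{2}$-integrability of the limit---and I would defer the reader to Proposition \ref{1form} and to \cite{Biquard} for the routine elliptic bootstrap.
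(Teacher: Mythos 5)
Your overall skeleton (write $\alpha_{j}=\beta+d\xi_{j}$, get a uniform $L^{2}$ bound, use interior elliptic estimates on compact sets where $g_{j}=g$, extract a $C^{\infty}_{loc}$ limit by Arzel\`a--Ascoli and a diagonal argument, and finish with lower semicontinuity of the $L^{2}$-norm to get $\alpha\in L^{2}\Omega^{2}_{g}(M)$) is sound, and your key estimate genuinely differs from the paper's: you bound $\|\alpha_{j}\|_{L^{2}(g_{j})}$ directly by norm-minimality of the harmonic representative, whereas the paper bounds the primitive $\sigma_{j}$ in $L^{2}_{1}(g_{j})$ by pairing $d^{*_{j}}d\sigma_{j}=-d^{*_{j}}\beta$ with $\sigma_{j}$ and invoking the uniform Poincar\'e inequality of Proposition \ref{biquard1}. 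However, there is a genuine gap in your second step. You justify the uniform bound on $\|\beta\|_{L^{2}(g_{j})}$ by asserting that $\beta$ is ``a fixed smooth form supported away from the collapsing region.'' For a general class $[a]\in H^{2}_{dR}(\overline{M})$ no such representative exists: if a closed form vanishes on a neighborhood of $\Sigma$, then the restriction of its class to $H^{2}(\Sigma)$ is zero, so any class restricting nontrivially to $\Sigma$ (a K\"ahler class, for instance) admits no representative supported away from the divisor. This is exactly the point of the paper's choice, via the exact sequence of the pair, of $\beta=\beta_{c}+\sum_{i}\gamma_{i}$ with $\beta_{c}$ compactly supported in $M$ and $\gamma_{i}$ pulled back from $H^{2}(\Sigma_{i};\setR)$: the pullback pieces cannot be pushed off $\Sigma$, but since they involve only the $\Sigma_{i}$-directions their pointwise $g_{j}$-norm is independent of $j$ (the collapsing circle direction never appears), and Proposition \ref{volumes} then yields the uniform $L^{2}(g_{j})$ bound. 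With this choice of $\beta$ your argument goes through; without it, the step fails.

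Two further remarks. First, once the representative is corrected, your route is a legitimate simplification \emph{of this proposition}: norm-minimality plus the Fatou-type argument replaces the harder analytic input, Proposition \ref{biquard1}. But the paper's route is not wasted effort: the uniform $L^{2}_{1}(g_{j})$ bound on the primitives $\sigma_{j}$ and their convergence to $\sigma\in L^{2}_{1}(M,g)$ are precisely what the subsequent lemmas use (the tail estimate $\int_{t\geq T}|d\sigma_{j}|^{2}d\mu_{g_{j}}\leq\epsilon$ and the orthogonality of $\sigma$ to $\mathcal{H}^{1}_{g}$), and those feed into Proposition \ref{intersection}; your softer argument produces none of this. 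Second, as written both you and the paper only extract a convergent subsequence; convergence of the full sequence requires knowing the limit is unique, which comes from the later identification of $\alpha$ via Theorem \ref{Zucker}, so this is a shared (and repairable) looseness rather than a defect of your proposal alone.
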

\begin{proof}
Given an element $a\in H^{2}_{dR}(\overline{M})$, take a smooth
representative of the form $\beta=\beta_{c}+\sum_{i}\gamma_{i}$
where $\beta_{c}$ is a closed 2-form with support not intersecting
$\Sigma$ and $\gamma_{i}\in H^{2}(\Sigma_{i}; \setR)$ for any $i$.
Given $g_{j}$, let $\alpha_{j}$ be the harmonic representative of
the cohomology class determined by $a$. By the Hodge decomposition
theorem we can write $\alpha_{j}=\beta+d\sigma_{j}$ with
$\sigma_{j}\in (\mathcal{H}^{1}_{g_{j}})^{\perp}$ such that
$d^{*_{j}}\sigma_{j}=0$. Thus
\begin{align}\notag
0=d^{*_{j}}\beta+d^{*_{j}}d\sigma_{j}\Longrightarrow
d^{*}d\sigma_{j}=-d^{*_{j}}\beta.
\end{align}
Taking the global $L^{2}$ inner product of $d^{*}d\sigma_{j}$ with
$\sigma_{j}$ we obtain the estimate
\begin{align}\label{1}
(d^{*}d\sigma_{j},\sigma_{j})_{L^{2}(g_{j})}&=
\|d\sigma_{j}\|^{2}_{L^{2}}=-\int_{\overline{M}}(\sigma_{j},d^{*}\beta)d\mu_{g_{j}}\\
\notag &\leq
\|\sigma_{j}\|_{L^{2}(g_{j})}\|d^{*}\beta\|_{L^{2}(g_{j})}.
\end{align}
By Proposition \ref{biquard1}, we conclude that
\begin{align}\label{2}
\|\sigma_{j}\|^{2}_{L^{2}(g_{j})}\leq
c\|d\sigma_{j}\|^{2}_{L^{2}(g_{j})}.
\end{align}
Combining \ref{1} and \ref{2} we then obtain
\begin{align}\notag
\|\sigma_{j}\|^{2}_{L^{2}(g_{j})}\leq
c\|d\sigma_{j}\|^{2}_{L^{2}(g_{j})}\leq
c\|\sigma_{j}\|_{L^{2}(g_{j})}\|d^{*_{j}}\beta\|_{L^{2}(g_{j})}.
\end{align}
Since $\|d^{*_{j}}\beta\|_{L^{2}(g_{j})}$ is bounded independently
of $j$, we conclude that the same is true for
$\|\sigma_{j}\|_{L^{2}(g_{j})}$ and
$\|d\sigma_{j}\|_{L^{2}(g_{j})}$. We conclude that
$\|\sigma_{j}\|_{L^{2}_{1}(g_{j})}$ is uniformly bounded. Now a
standard diagonal argument allows us to conclude that, up to a
subsequence, $\{\sigma_{j}\}$ weakly converges to an element
$\sigma\in L^{2}_{1}$. Using the elliptic equation
\begin{align}\notag
\Delta^{g_{j}}_{H}\sigma_{j}=-d^{*_{j}}\beta
\end{align}
and a bootstrapping argument it is possible to show that
$\sigma_{j}\rightarrow \sigma$ in the $C^{\infty}$ topology on
compact sets. This proves the proposition.
\end{proof}

We know want to obtain a refinement of Proposition \ref{2form}. We
begin with the following simple lemma.

\begin{lemma}\label{2form1}
Given $\epsilon>0$, there exists $T$ big enough such that
\begin{align}\notag
\int_{t\geq T}|d^{*_{g}}\beta|^{2}d\mu_{g}\leq \epsilon, \quad
\int_{t\geq T}|d^{*_{j}}\beta|^{2}d\mu_{g_{j}}\leq\epsilon.
\end{align}
\end{lemma}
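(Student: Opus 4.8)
The plan is to follow verbatim the strategy used for the corresponding lemma on $1$-forms, reducing the whole estimate to a computation on the cusp ends. Recall that $\beta=\beta_{c}+\sum_{i}\gamma_{i}$, where $\beta_{c}$ is a smooth closed $2$-form whose support does not meet $\Sigma$ and $\gamma_{i}\in H^{2}(\Sigma_{i};\setR)$. First I would observe that for $T$ large the region $\{t\geq T\}$ is disjoint from the support of $\beta_{c}$, so on each cusp only the pulled-back terms $\gamma_{i}$ survive. Hence it suffices to bound $\int_{t\geq T}|d^{*_{g}}\gamma_{i}|^{2}d\mu_{g}$ and its $g_{j}$-counterpart on a single cusp.

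Next I would fix $\gamma_{i}$ to be harmonic with respect to $g_{\Sigma_{i}}$; since $\Sigma_{i}$ is a real surface, such a form is a constant multiple of the area form of $(\Sigma_{i},g_{\Sigma_{i}})$, and it is extended to the cusp as the pullback $p^{*}\gamma_{i}$. Working in the orthonormal coframe $e^{0}=dt$, $e^{1}=\varphi_{j}\eta_{i}$, with $e^{2},e^{3}$ pulled back from $\Sigma_{i}$ so that $\gamma_{i}=e^{2}\wedge e^{3}$, one computes $*_{g}\gamma_{i}=\varphi_{j}\,dt\wedge\eta_{i}$ and hence $d*_{g}\gamma_{i}=-\varphi_{j}\,dt\wedge d\eta_{i}$. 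Because $\gamma_{i}$ is co-closed along the base, the only surviving contribution to $d^{*_{g}}\gamma_{i}$ comes from the fiber-collapsing factor $\varphi_{j}$ together with the curvature $d\eta_{i}$ of the normal-bundle connection. A short Hodge-star computation then yields the pointwise bound $|d^{*_{g}}\gamma_{i}|=O(\varphi_{j})=O(e^{-t})$ on the Poincar\'e cusp, where I use that $d\eta_{i}$ is bounded on the compact $\Sigma_{i}$; the $O(e^{-t})$ error stemming from the $C^{2}$-asymptotic nature of $g$ is absorbed exactly as in the Ricci computation of Lemma \ref{Rollin}.

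With this decay in hand the $L^{2}$ estimate is immediate, since the volume form carries the same collapsing factor, $d\mu_{g}\sim\varphi_{j}\,dt\wedge\eta_{i}\wedge d\mu_{g_{\Sigma_{i}}}$, so that $\int_{t\geq T}|d^{*_{g}}\gamma_{i}|^{2}d\mu_{g}=O\!\left(\int_{T}^{\infty}e^{-3t}\,dt\right)=O(e^{-3T})$, which is $\leq\epsilon$ once $T$ is large. For the metrics $g_{j}$ I would split $\{t\geq T\}$ into $\{T\leq t\leq j\}$, where $g_{j}=g$ and the integrand is identical to the one just estimated, and $\{t\geq j\}$, where $g_{j}=\tilde g_{j}$ makes the above computation exact with $\varphi_{j}\leq e^{-j}$, giving the pointwise bound $|d^{*_{j}}\gamma_{i}|^{2}\leq Ce^{-2j}$. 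Combined with the uniform volume bound $\mathrm{Vol}_{g_{j}}(\overline{M})\leq K$ from Proposition \ref{volumes}, the second piece contributes at most $CKe^{-2j}$, which tends to $0$. Choosing $T$ large so that $O(e^{-3T})<\epsilon/2$ and then taking $j$ large delivers both inequalities in the limit $j\to\infty$, exactly as in the $1$-form case.

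The main obstacle I anticipate is bookkeeping rather than conceptual. One must verify that the $O(e^{-t})$ perturbation terms in the $C^{2}$-asymptotic metric, and the interpolation on $[j,j+1]$ defining $g_{j}$, do not spoil the clean $O(\varphi_{j})$ decay of $d^{*}\gamma_{i}$; and one must confirm that $p^{*}\gamma_{i}$ extends smoothly across the capping region near $t=T_{j}$ as the fiber collapses, with $|d^{*_{j}}\gamma_{i}|\to 0$ there. Both points are controlled by the monotonicity and uniform bounds on $\varphi_{j}$ recorded in Section \ref{compactification} and by the asymptotic expansion (\ref{model}), so no genuinely new analysis is required.
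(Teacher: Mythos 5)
Your proposal is correct and takes essentially the same approach as the paper: the paper's own proof is a one-line assertion that, given the decomposition $\beta=\beta_{c}+\sum_{i}\gamma_{i}$, the estimate ``follows from the definition of the metrics $\{g_{j}\}$,'' and your explicit cusp computation ($|d^{*}\gamma_{i}|=O(\varphi_{j})$ together with the collapsing volume form) is precisely the content of that assertion. There is no genuine gap; even the uniformity in $j$ that you defer to the end is automatic, since in $(\overline{M},g_{j})$ the region $\{t\geq T\}$ is empty unless $T_{j}\geq T$, which forces $j$ itself to be large.
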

\begin{proof}
Since $\beta=\beta_{c}+\gamma$ with $\gamma$ a fixed element in
$H^{2}(\Sigma;\setR)$, the lemma follows from the definition of the
metrics $\{g_{j}\}$.
\end{proof}

An analogous result holds for the 2-forms $\{d\sigma_{j}\}$.

\begin{lemma}
Given $\epsilon>0$, there exists $T$ big enough such that
\begin{align}\notag
\int_{t\geq T}|d\sigma|^{2}d\mu_{g}\leq \epsilon, \quad \int_{t\geq
T}|d\sigma_{j}|^{2}d\mu_{g_{j}}\leq\epsilon.
\end{align}
\end{lemma}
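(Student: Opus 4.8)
The plan is to follow the integration-by-parts argument from the proof of Lemma \ref{harmonic1}, now applied to the $1$-forms $\{\sigma_j\}$ (recall $\alpha_j$ is a $2$-form, so $\sigma_j$ is a $1$-form and $d\sigma_j$ is a $2$-form). Since $\sigma_j\in(\mathcal{H}^1_{g_j})^\perp$ satisfies $d^{*_j}\sigma_j=0$ and, as established in the proof of Proposition \ref{2form}, $d^{*_j}d\sigma_j=-d^{*_j}\beta$, Green's formula on the region $\{t\geq T\}$ (which, in the compactification $\overline{M}$, is a compact region with boundary only at $t=T$) gives, exactly as in Lemma \ref{harmonic1},
\begin{align}\notag
\int_{t\geq T}|d\sigma_j|^2 d\mu_{g_j}=\int_{t=T}\sigma_j\wedge *d\sigma_j+\int_{t\geq T}(d^{*_j}\beta,\sigma_j)d\mu_{g_j}.
\end{align}

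First I would dispose of the bulk term. By the Cauchy-Schwarz inequality,
\begin{align}\notag
\Big|\int_{t\geq T}(d^{*_j}\beta,\sigma_j)d\mu_{g_j}\Big|\leq\|d^{*_j}\beta\|_{L^2_{g_j}(t\geq T)}\,\|\sigma_j\|_{L^2(g_j)},
\end{align}
and since $\|\sigma_j\|_{L^2(g_j)}$ is uniformly bounded in $j$ (this is part of the a priori bound established in Proposition \ref{2form}) while $\|d^{*_j}\beta\|_{L^2_{g_j}(t\geq T)}$ can be made arbitrarily small by choosing $T$ large, courtesy of Lemma \ref{2form1}, this contribution is $O(\epsilon)$ uniformly in $j$.

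It remains to control the boundary term $\int_{t=T}\sigma_j\wedge *d\sigma_j$, and here the argument mirrors Lemma \ref{harmonic1} most closely. For a \emph{fixed} $T$, Proposition \ref{2form} gives $\sigma_j\to\sigma$ in the $C^\infty$-topology on compact sets, hence
\begin{align}\notag
\int_{t=T}\sigma_j\wedge *d\sigma_j\longrightarrow\int_{t=T}\sigma\wedge *d\sigma
\end{align}
as $j\to\infty$. To see the limit can be made small, I would set $F(s)=\int_{t=s}\sigma\wedge *d\sigma$ and observe that, because $\sigma\in L^2_1$, one has $F\in L^1(\setR^+)$, so there is a sequence $s_k\to\infty$ along which $F(s_k)\to 0$; choosing $T=s_k$ large enough makes the limiting boundary integral at most $\epsilon$. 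Combining the two estimates yields the bound for $\{g_j\}$ once $j$ is large, and the corresponding bound for $g$ follows by passing to the $j\to\infty$ limit, or equivalently by running the identical computation directly on $(M,g)$.

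The main obstacle is the boundary term together with the required uniformity in $j$: one must first select $T=s_k$ so that the \emph{limiting} boundary integral is small, and only then invoke the $C^\infty$-convergence of Proposition \ref{2form} to transfer this control to $\sigma_j$ for all sufficiently large $j$. Interchanging these two limits naively would be illegitimate, so the order — fix $T$ from the integrability of $F$, then let $j\to\infty$ — is essential to the argument.
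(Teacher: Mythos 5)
Your proposal is correct and follows essentially the same route as the paper: the same integration by parts using $d^{*_j}d\sigma_j=-d^{*_j}\beta$, Cauchy--Schwarz with Lemma \ref{2form1} and the uniform bound on $\|\sigma_j\|_{L^2(g_j)}$ for the bulk term, and the $C^\infty$-convergence on compact sets plus the $L^1$-trick (borrowed from Lemma \ref{harmonic1}, with the same order of limits) for the boundary term. The only cosmetic difference is the first inequality, which the paper gets in one line from $\alpha\in L^{2}\Omega^{2}_{g}(M)$ (so $d\sigma=\alpha-\beta\in L^{2}$) rather than by passing to the limit in $j$.
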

\begin{proof}
The first inequality follows easily from the fact that $\alpha\in
L^{2}\Omega^{2}_{g}(M)$. By Lemma \ref{2form1}, given $\epsilon>0$
we can find $T$ such that
\begin{align}\notag
\|\sigma_{j}\|_{L^{2}(g_{j})}\Big\{\int_{t\geq
T}|d^{*}\beta|^{2}d\mu_{g_{j}}\Big\}^{\frac{1}{2}}\leq
\frac{\epsilon}{2}
\end{align}
independently of the index $j$. Now
\begin{align}\notag
\int_{t\geq
T}|d\sigma_{j}|^{2}d\mu_{g_{j}}=\int_{t=T}\sigma_{j}\wedge*_{j}d\sigma_{j}-\int_{t\geq
T}(d^{*_{j}}d\sigma_{j},\sigma_{j})d\mu_{g_{j}}
\end{align}
but $d^{*_{j}}d\sigma_{j}=-d^{*_{j}}\beta$, thus
\begin{align}\notag
\int_{t\geq T}|d\sigma_{j}|^{2}d\mu_{g_{j}}\leq
\frac{\epsilon}{2}+\Big\vert\int_{t=T}\sigma_{j}\wedge*_{j}d\sigma_{j}\Big\vert.
\end{align}
Since $\sigma_{j}\rightarrow\sigma$ in the $C^{\infty}$ topology on
compact sets, we have that
$\int_{t=T}\sigma_{j}\wedge*_{j}d\sigma_{j}\rightarrow\int_{t=T}\sigma\wedge*_{g}d\sigma$.
But now $\sigma\in L^{2}_{1}(g)$ and therefore we can conclude the
proof of the proposition.
\end{proof}

\begin{lemma}
$\sigma$ is orthogonal to the harmonic 1-form on $(M,g)$.
\end{lemma}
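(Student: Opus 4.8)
The plan is to prove that $(\sigma,h)_{L^{2}(g)}=0$ for every $L^{2}$-harmonic $1$-form $h\in\mathcal{H}^{1}_{g}(M)$, by transporting to the limit the orthogonality relations $\sigma_{j}\perp\mathcal{H}^{1}_{g_{j}}$ that hold by construction. By Theorem \ref{Zucker} together with Proposition \ref{1form}, every such $h$ is the limit, in the $C^{\infty}$-topology on compact sets, of a sequence $h_{j}\in\mathcal{H}^{1}_{g_{j}}$ of $g_{j}$-harmonic representatives of a fixed class $[a]\in H^{1}_{dR}(\overline{M})$; the isomorphism of Theorem \ref{Zucker} guarantees that the limiting forms so produced exhaust $\mathcal{H}^{1}_{g}(M)$, so it suffices to test orthogonality against them. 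Since $\sigma_{j}\in(\mathcal{H}^{1}_{g_{j}})^{\perp}$ by construction, we have $(\sigma_{j},h_{j})_{L^{2}(g_{j})}=0$ for every $j$, and the whole problem reduces to passing to the limit in this identity.

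Next I would split each pairing at a level $t=T$ on the cusps, writing
\begin{align}\notag
0=(\sigma_{j},h_{j})_{L^{2}(g_{j})}=\int_{t<T}(\sigma_{j},h_{j})\,d\mu_{g_{j}}+\int_{t\geq T}(\sigma_{j},h_{j})\,d\mu_{g_{j}}.
\end{align}
On the compact region $\{t<T\}$ the metrics $g_{j}$ agree with $g$ for $j$ large, so the $C^{\infty}$-convergence $\sigma_{j}\rightarrow\sigma$ and $h_{j}\rightarrow h$ yields $\int_{t<T}(\sigma_{j},h_{j})\,d\mu_{g_{j}}\rightarrow\int_{t<T}(\sigma,h)\,d\mu_{g}$. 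For the tail I would invoke the Cauchy inequality together with the uniform bound $\|\sigma_{j}\|_{L^{2}(g_{j})}\leq C$ from Proposition \ref{2form} and the decay estimate of Lemma \ref{harmonic1} applied to the harmonic $1$-forms $h_{j}$, obtaining
\begin{align}\notag
\Big\vert\int_{t\geq T}(\sigma_{j},h_{j})\,d\mu_{g_{j}}\Big\vert\leq\|\sigma_{j}\|_{L^{2}(g_{j})}\Big(\int_{t\geq T}|h_{j}|^{2}\,d\mu_{g_{j}}\Big)^{1/2}\leq C\sqrt{\epsilon}
\end{align}
uniformly in $j$, and likewise $|\int_{t\geq T}(\sigma,h)\,d\mu_{g}|\leq C\sqrt{\epsilon}$ using $\sigma\in L^{2}_{1}(g)$ and the $g$-decay of $h$ from Lemma \ref{harmonic1}. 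Combining these, for a fixed $\epsilon>0$ and $T$ chosen accordingly, letting $j\rightarrow\infty$ gives $|(\sigma,h)_{L^{2}(g)}|\leq 2C\sqrt{\epsilon}$; since $\epsilon$ is arbitrary we conclude $(\sigma,h)_{L^{2}(g)}=0$.

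The main obstacle is precisely this uniform control of the tails: the convergence of $\sigma_{j}$ and $h_{j}$ is only on compact sets, so interchanging the limit with the global $L^{2}$-pairing is not automatic, and one cannot simply drop the contribution near $\Sigma$. What makes the argument go through is the combination already assembled in this section—the $j$-independent bound on $\|\sigma_{j}\|_{L^{2}(g_{j})}$ furnished by Proposition \ref{2form}, and the uniform $L^{2}$-decay of the harmonic $1$-forms $h_{j}$ on the cusps furnished by Lemma \ref{harmonic1}—which together force the tail of the pairing to be small uniformly in $j$. The one point demanding care is that the decay estimate of Lemma \ref{harmonic1} is genuinely available for both $h_{j}$ and $h$; this is so because these are exactly the harmonic $1$-forms to which that lemma applies, via the construction in Proposition \ref{1form}.
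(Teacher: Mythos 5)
Your proposal is correct and follows essentially the same route as the paper: both start from $(\sigma_{j},h_{j})_{L^{2}(g_{j})}=0$, realize an arbitrary $h\in\mathcal{H}^{1}_{g}(M)$ as a compact-set $C^{\infty}$-limit of $g_{j}$-harmonic representatives via Theorem \ref{Zucker} and Proposition \ref{1form}, and then pass to the limit by splitting the pairing into a compact piece (handled by local convergence) and a cusp tail (handled by Cauchy--Schwarz, the uniform bound on $\|\sigma_{j}\|_{L^{2}(g_{j})}$ from Proposition \ref{2form}, and the uniform decay of Lemma \ref{harmonic1}). The only cosmetic difference is that you cut along $\{t=T\}$ while the paper cuts along a compact set $K$ and its complement, which is the same decomposition in different notation.
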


\begin{proof}
By construction we have $\sigma_{j}\in
(\mathcal{H}^{1}_{g_{j}})^{\perp}$. Recall that fixed a cohomology
element $[a]\in H^{1}_{dR}(\overline{M})$, denoted by
$\{\gamma_{j}\}$ the sequence of the harmonic representatives with
respect to the $\{g_{j}\}$, given $\epsilon>0$ we can chose $T$ such
that $\int_{t\geq T}|\gamma_{j}|^{2}d\mu_{g_{j}}\leq \epsilon$. Now,
given $\gamma\in \mathcal{H}^{1}_{g}$ we want to show that
$(\sigma,\gamma)_{L^{2}(g)}=0$. Since
$H^{1}_{dR}(\overline{M})=\mathcal{H}^{1}_{g}(M)$, we can find a
sequence of harmonic 1-forms $\{\gamma_{j}\}$ such that
$\gamma_{j}\rightarrow\gamma$ in the $C^{\infty}$ topology on
compact sets. Let $K$ be a compact set in $M$, then
\begin{align}
\Big\vert\int_{\overline{M}\backslash
K}(\sigma_{j},\gamma_{j})d\mu_{j}\Big\vert\leq\|\sigma_{j}\|_{L^{2}_{g_{j}}}\|\gamma_{j}\|_{L^{2}_{g_{j}}(\overline{M}\backslash
K)}
\end{align}
can be made arbitrarily small by choosing the compact $K$ big
enough. Since
$(\sigma_{j},\gamma_{j})_{L^{2}(\overline{M},g_{j})}=0$, we have
\begin{align}\notag
\int_{K}(\sigma_{j},\gamma_{j})d\mu_{g_{j}}=-\int_{\overline{M}\backslash
K}(\sigma_{j},\gamma_{j})d\mu_{g_{j}}
\end{align}
and then the integral $\int_{K}(\sigma_{j},\gamma_{j})d\mu_{g_{j}}$
can be made arbitrarily small. On the other hand
\begin{align}\notag
\Big\vert\int_{M}(\sigma,\gamma)d\mu_{g}\Big\vert\leq
\Big\vert\int_{K}(\sigma,\gamma)d\mu_{g}\Big\vert+\|\sigma\|_{L^{2}(M,g)}\|\gamma\|_{L^{2}_{g}(M\backslash
K)}.
\end{align}
Since $\gamma\in L^{2}\Omega^{1}_{g}(M)$ we conclude that $\sigma\in
(\mathcal{H}^{1}_{g})^{\perp}$.
\end{proof}

We now want to study the intersection form of $(\overline{M},g_{j})$
and eventually show the convergence to the $L^{2}$ intersection form
of $(M,g)$. Recall the isomorphism $H^{2}_{dR}(\overline{M})\simeq
\mathcal{H}^{2}(M)$, moreover given $[a]\in
H^{2}_{dR}(\overline{M})$ we can generate $\{\alpha_{j}\}\in
\mathcal{H}^{2}_{g_{j}}(\overline{M})$ that converges in the
$C^{\infty}$ topology on compact sets to a $\alpha\in
\mathcal{H}^{2}_{g}(M)$. We also have that, fixed a compact set $K$,
$*_{j}=*_{g}$ for $j$ big enough. Since
\begin{align}\notag
\alpha_{j}=\alpha^{+_{j}}_{j}+\alpha^{-_{j}}_{j}=\frac{\alpha_{j}+*_{j}\alpha_{j}}{2}+
\frac{\alpha_{j}-*_{j}\alpha_{j}}{2}\rightarrow
\alpha^{+_{g}}+\alpha^{-_{g}}=\alpha
\end{align}
the claim follows. Let us summarize these results into a
proposition.
\begin{proposition}\label{intersection}
Let $(\overline{M}, g_{j})$ and $(M, g)$ be defined as above. Given
$[a]\in H^{2}_{dR}(\overline{M})$ and denoted by $\{\alpha_{j}\}$
the harmonic representatives with respect to the sequence of metrics
$\{g_{j}\}$, we have
\begin{align}\notag
\|\alpha^{+}_{j}\|_{L^{2}(\overline{M},
g_{j})}\rightarrow\|\alpha^{+}\|_{L^{2}(M, g)},
\quad\|\alpha^{-}_{j}\|_{L^{2}(\overline{M},
g_{j})}\rightarrow\|\alpha^{-}\|_{L^{2}(M, g)}.
\end{align}

\end{proposition}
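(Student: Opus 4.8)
The plan is to reduce the global norm convergence to two ingredients: convergence of the self-dual and anti-self-dual $L^2$ norms on a fixed large compact set, and a uniform (in $j$) control of the $L^2$-mass that the harmonic representatives carry on the cusp region. I would treat $\alpha_j^{+}$ and $\alpha_j^{-}$ separately and work with squared norms, since $\|\alpha_j^{\pm}\|^2$ splits additively when $\overline{M}$ is cut along a level set $\{t=T\}$ into $\{t\leq T\}$ and $\{t\geq T\}$.

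\emph{Convergence on compact sets.} Fix $T>0$ and set $K=\{t\leq T\}$. By the construction in Section \ref{compactification} one has $g_{j}=g$ on $K$ as soon as $j\geq T$, since $\chi_{j}=0$ there; in particular $*_{g_{j}}=*_{g}$ on $K$, so the pointwise splitting into self-dual and anti-self-dual parts for $g_{j}$ agrees with the one for $g$ on $K$. By Proposition \ref{2form}, $\alpha_{j}\to\alpha$ in the $C^{\infty}$-topology on $K$, and since extracting the $\pm$-part for the fixed metric $g$ is a continuous linear operation, $\alpha_{j}^{\pm}\to\alpha^{\pm}$ in $C^{\infty}(K)$. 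Hence $\|\alpha_{j}^{\pm}\|_{L^{2}(K,g_{j})}\to\|\alpha^{\pm}\|_{L^{2}(K,g)}$.

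\emph{Uniform tail control.} This is the step I expect to be the main obstacle. Writing $\alpha_{j}=\beta+d\sigma_{j}$ as in the proof of Proposition \ref{2form}, with $\beta=\beta_{c}+\sum_{i}\gamma_{i}$, I would estimate $\int_{t\geq T}|\alpha_{j}|^{2}d\mu_{g_{j}}\leq 2\int_{t\geq T}|\beta|^{2}d\mu_{g_{j}}+2\int_{t\geq T}|d\sigma_{j}|^{2}d\mu_{g_{j}}$. The second term is at most $\epsilon$ for $T$ large, uniformly in $j$, by the lemma on $\{d\sigma_{j}\}$ proved just above. For the first term, once $T$ exceeds the support of $\beta_{c}$ only the divisor contributions $\gamma_{i}$ survive; since each $\gamma_{i}$ is the pullback of a harmonic $2$-form on $\Sigma_{i}$ and the $\Sigma_{i}$-block of $g_{j}$ is, up to $O(e^{-t})$, the fixed metric $g_{\Sigma_{i}}$, the pointwise norm $|\gamma_{i}|_{g_{j}}$ is bounded uniformly in $t$ and $j$. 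Therefore $\int_{t\geq T}|\beta|^{2}d\mu_{g_{j}}\leq C\,\mathrm{Vol}_{g_{j}}(\{t\geq T\})$, and from the prescribed behaviour of the warping factors $\varphi_{j}$ together with Proposition \ref{volumes} the volume of the cusp-plus-cap region $\{t\geq T\}$ is bounded by $C'(e^{-T}+e^{-(j+1)})$, hence is below $\epsilon$ for $T$ large, uniformly in $j$. Since $|\alpha_{j}^{\pm}|\leq|\alpha_{j}|$ pointwise, the same bound controls $\int_{t\geq T}|\alpha_{j}^{\pm}|^{2}d\mu_{g_{j}}$.

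\emph{Conclusion.} Given $\epsilon>0$, choose $T$ so that the two tails above, as well as $\int_{t\geq T}|\alpha^{\pm}|^{2}d\mu_{g}$ (finite because $\alpha\in L^{2}\Omega^{2}_{g}(M)$), are each at most $\epsilon$. Splitting $\|\alpha_{j}^{\pm}\|^{2}_{L^{2}(\overline{M},g_{j})}=\|\alpha_{j}^{\pm}\|^{2}_{L^{2}(K,g_{j})}+\|\alpha_{j}^{\pm}\|^{2}_{L^{2}(\{t\geq T\},g_{j})}$ and letting $j\to\infty$, the compact part converges to $\|\alpha^{\pm}\|^{2}_{L^{2}(K,g)}$ by the first step, while both remainder terms stay below $\epsilon$; letting $\epsilon\to 0$ yields $\|\alpha_{j}^{\pm}\|_{L^{2}(\overline{M},g_{j})}\to\|\alpha^{\pm}\|_{L^{2}(M,g)}$. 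The crux is precisely the uniform tail estimate of the second step: unlike the fixed-metric tail of $\alpha$, one must rule out accumulation of $L^{2}$-mass of the harmonic representatives on the collapsing cusps, which is exactly where the uniform volume bound and the prescribed profile of $\varphi_{j}$ are used.
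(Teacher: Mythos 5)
Your proof is correct and takes essentially the same route as the paper: $C^{\infty}$-convergence on compact sets from Proposition \ref{2form}, together with the observation that $g_{j}=g$ (hence $*_{j}=*_{g}$) on $\{t\leq T\}$ for $j$ large, combined with the uniform tail estimates developed in Section \ref{analisi2}. The only difference is one of completeness: the paper's stated proof leaves the tail control implicit in the preceding lemmas on $d\sigma_{j}$, whereas you also spell out the needed estimate for the $\gamma_{i}$-part of $\beta$ via boundedness of $|\gamma_{i}|_{g_{j}}$ and the uniform volume bound $\mathrm{Vol}_{g_{j}}(\{t\geq T\})=O(e^{-T})$, which is exactly the right way to exclude accumulation of $L^{2}$-mass on the collapsing cusps.
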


\section{Biquard's construction}\label{bootstrap}

In this section we show how to construct an irreducible solution of
the Seiberg-Witten equations on $(M,g)$, for \emph{any} metric $g$
which is $C^{2}$-asymptotic to a Poincar\'e type metric at infinity.

Fix a $Spin^{c}$ structure on $\overline{M}$, with determinant line
bundle $L$, and let $g$ be a Poincar\'e metric on
$\overline{M}\backslash\Sigma$. Let $\{g_{j}\}$ be the sequence of
metrics on $\overline{M}$ approximating $(M,g)$ constructed in
Section \ref{compactification}. Let us assume we can find
irreducible solutions $(A_{j},\psi_{j})$ of the perturbed
Seiberg-Witten equations on $(\overline{M},g_{j})$
\begin{equation*}
\begin{cases}
\mathcal{D}_{A_{j}}\psi_{j}=0\\
F^{+}_{A_{j}}+i2\pi\omega^{+}_{j}=q(\psi_{j})
\end{cases}
\end{equation*}
where $\omega_{j}=\frac{i}{2\pi}F_{B_{j}}$ and $B_{j}$ is the
connection on the line bundle $\mathcal{O}_{\overline{M}}(\Sigma)$
given by
\begin{align}\notag
B_{j}=d-\sum_{k}i\chi_{j}(\partial_{t}\varphi_{j})\eta_{k}.
\end{align}
The idea is to show that, up to gauge transformations, the
$(A_{j},\psi_{j})$ converge in the $C^{\infty}$ topology on compact
sets to a solution of the unperturbed Seiberg-Witten equations
\begin{equation*}
\begin{cases}
\mathcal{D}_{A} \psi=0\\
F^{+}_{A}=q(\psi)
\end{cases}
\end{equation*}
on $(M,g)$, where $A=C+a$, with $C$ a fixed smooth connection on
$L\otimes \mathcal{O}(-\Sigma)$, and $a\in
L^{2}_{1}(\Omega^{1}_{g}(M))$ with $d^{*}a=0$.

\begin{lemma}\label{bounded}
We have the decomposition
\begin{align} \notag
&s_{g_{j}}=s^{b}_{g_{j}}-2\chi_{j}\frac{\partial^{2}_{t}\varphi_{j}}{\varphi_{j}}\\
\notag
&F_{B_{j}}=-\sum_{k}i\chi_{j}\frac{\partial^{2}_{t}\varphi_{j}}{\varphi_{j}}dt\wedge\varphi_{j}\eta_{k}+F^{b}_{j}
\end{align}
with $s^{b}_{g_{j}}$ and $F^{b}_{j}$ bounded independently of $j$
\end{lemma}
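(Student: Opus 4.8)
The first identity is precisely the content of Proposition \ref{scalar curvature}, so nothing new is required there; the whole task is to establish the stated splitting of $F_{B_{j}}$ together with the uniform bound on $F^{b}_{j}$. The plan is to exploit that $B_{j}$ is a $U(1)$-connection on the line bundle $\mathcal{O}_{\overline{M}}(\Sigma)$, so that its curvature is simply the exterior derivative of its connection form. Writing $A_{j}=-\sum_{k}i\chi_{j}(\partial_{t}\varphi_{j})\eta_{k}$, I would compute $F_{B_{j}}=dA_{j}$ directly on each cusp, where $\chi_{j}$ and $\varphi_{j}$ depend on $t$ alone.

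Carrying out the Leibniz rule, and using that $d(\chi_{j}\partial_{t}\varphi_{j})=(\partial_{t}\chi_{j}\,\partial_{t}\varphi_{j}+\chi_{j}\partial_{t}^{2}\varphi_{j})\,dt$, I obtain
\begin{align}\notag
F_{B_{j}}=-\sum_{k}i\big[(\partial_{t}\chi_{j})(\partial_{t}\varphi_{j})\,dt\wedge\eta_{k}+\chi_{j}(\partial_{t}^{2}\varphi_{j})\,dt\wedge\eta_{k}+\chi_{j}(\partial_{t}\varphi_{j})\,d\eta_{k}\big].
\end{align}
The middle term is the claimed singular contribution: since $(\partial_{t}^{2}\varphi_{j})\,dt\wedge\eta_{k}=\frac{\partial_{t}^{2}\varphi_{j}}{\varphi_{j}}\,dt\wedge\varphi_{j}\eta_{k}$, it coincides with $-\sum_{k}i\chi_{j}\frac{\partial_{t}^{2}\varphi_{j}}{\varphi_{j}}dt\wedge\varphi_{j}\eta_{k}$. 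I therefore set $F^{b}_{j}=-\sum_{k}i\big[(\partial_{t}\chi_{j})(\partial_{t}\varphi_{j})\,dt\wedge\eta_{k}+\chi_{j}(\partial_{t}\varphi_{j})\,d\eta_{k}\big]$ and reduce the lemma to showing that $|F^{b}_{j}|_{g_{j}}$ is bounded uniformly in $j$.

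The main point, and the only real subtlety, is that the metrics $g_{j}$ collapse in the $\eta_{k}$-direction, so that $|\eta_{k}|_{g_{j}}=1/\varphi_{j}$ and hence $|dt\wedge\eta_{k}|_{g_{j}}=1/\varphi_{j}$ grows as one moves out the cusp. For the first term I would use that $\partial_{t}\chi_{j}$ is supported in $[j,j+1]$, where by construction $\varphi_{j}=e^{-t}$ and thus $|\partial_{t}\varphi_{j}|/\varphi_{j}=1$; combined with the uniform bound on $|\partial_{t}\chi_{j}|$ (the $\chi_{j}$ being translates of a fixed profile) this gives $|(\partial_{t}\chi_{j})(\partial_{t}\varphi_{j})\,dt\wedge\eta_{k}|_{g_{j}}=|\partial_{t}\chi_{j}|\,|\partial_{t}\varphi_{j}|/\varphi_{j}$ bounded independently of $j$. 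For the second term, $d\eta_{k}$ is the pullback of the curvature of the normal bundle and is purely horizontal, so its norm is measured by the fixed metric $g_{\Sigma_{k}}$ and is bounded; together with $|\partial_{t}\varphi_{j}|\leq 1$ everywhere, which is immediate from the warping conditions (1)--(2), this bounds the last term uniformly as well. The hard part is precisely this interplay between the $1/\varphi_{j}$ growth of the collapsing coframe and the balancing identity $|\partial_{t}\varphi_{j}|/\varphi_{j}=1$ on the support of $\partial_{t}\chi_{j}$, which is exactly what prevents $F^{b}_{j}$ from inheriting the blow-up of the singular term.
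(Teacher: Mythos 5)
Your argument is correct and is exactly the computation the paper leaves implicit: the paper's own proof of this lemma is a one-line pointer to Proposition \ref{scalar curvature} (which in turn defers the details to \cite{Rollin} and \cite{luca1}), and your direct calculation of $F_{B_{j}}=dA_{j}$, with the key cancellation $|\partial_{t}\varphi_{j}|/\varphi_{j}=1$ against the $1/\varphi_{j}$ growth of $|dt\wedge\eta_{k}|_{g_{j}}$ on the support of $\partial_{t}\chi_{j}$, is precisely what those references supply. The only tacit hypothesis you invoke --- that the cutoffs $\chi_{j}$ have uniformly bounded derivative, e.g.\ are translates of a fixed profile --- is standard and consistent with the paper's construction, so there is no gap.
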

\begin{proof}
See Proposition \ref{scalar curvature}.
\end{proof}

Since $i2\pi \omega_{j}=-F_{B_{j}}$, we can rewrite the perturbed
Seiberg-Witten equations as follows
\begin{equation}\label{perturbed}
\begin{cases}
\mathcal{D}_{A_{j}} \psi_{j}=0\\
F^{+}_{A_{j}}-F^{+}_{B_{j}}=q(\psi_{j}).
\end{cases}
\end{equation}
Recall that in the case under consideration, the twisted Licherowicz
formula \cite{Lawson} reads as follows
\begin{align}\notag
\mathcal{D}^{2}_{A_{j}}\psi_{j}=\nabla^{*}_{A_{j}}\nabla_{A_{j}}\psi_{j}
+\frac{s_{g_{j}}}{4}\psi_{j}+\frac{1}{2}F^{+}_{A_{j}}\cdot\psi_{j}.
\end{align}
By using the SW equations we have
\begin{align}\notag
0=\nabla^{*}_{A_{j}}\nabla_{A_{j}}\psi_{j}+\frac{s_{g_{j}}}{4}\psi_{j}
+\frac{|\psi_{j}|^{2}}{4}\psi_{j}+\frac{1}{2}F^{+}_{B_{j}}\cdot\psi_{j}.
\end{align}
Keeping into account the decomposition given in Lemma \ref{bounded}
we obtain
\begin{align}\notag
0=\nabla^{*}_{A_{j}}\nabla_{A_{j}}\psi_{j}+P_{j}\psi_{j}+P^{b}_{j}\psi_{j}
+\frac{|\psi_{j}|^{2}}{4}\psi_{j}
\end{align}
where on each cusp
\begin{align}\notag
P_{j}\psi_{j}=-\frac{1}{2}\chi_{j}\frac{\partial^{2}_{t}\varphi_{j}}{\varphi_{j}}\psi_{j}
-\frac{i}{2}\chi_{j}\frac{\partial^{2}_{t}\varphi_{j}}{\varphi_{j}}(dt\wedge\varphi_{j}\eta_{i})^{+}\cdot\psi_{j}
\end{align}
with $P^{b}_{j}$ uniformly bounded in $j$. Now, it can be explicitly
checked that for a metric of the form
$dt^{2}+\varphi^{2}_{j}\eta^{2}_{i}+g_{\Sigma_{i}}$ the self-dual
form $(dt\wedge\varphi_{j}\eta_{i})^{+}$ acts by Clifford
multiplication with eigenvalues $\pm i$. The eigenvalues of the
operator $P_{j}$ are then given by $0$ and
$-\chi_{j}\frac{\partial^{2}_{t}\varphi_{j}}{\varphi_{j}}$.
\begin{lemma}\label{bspin}
There exists a constant $Q>0$ such that
\begin{align}\notag
|\psi_{j}(x)|^{2}\leq Q
\end{align}
for every $j$ and $x\in \overline{M}$.
\end{lemma}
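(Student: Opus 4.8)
The plan is to establish the uniform $C^{0}$ bound by a maximum principle applied to the Weitzenb\"och identity displayed just above the statement, which is the standard a priori estimate for the Seiberg-Witten spinor, but keeping careful track of the $j$-dependence of the zeroth order terms. First I would pair
\[
0=\nabla^{*}_{A_{j}}\nabla_{A_{j}}\psi_{j}+P_{j}\psi_{j}+P^{b}_{j}\psi_{j}+\frac{|\psi_{j}|^{2}}{4}\psi_{j}
\]
with $\psi_{j}$ in the fibre metric and combine the result with the Bochner formula $\tfrac{1}{2}\Delta|\psi_{j}|^{2}=\langle\nabla^{*}_{A_{j}}\nabla_{A_{j}}\psi_{j},\psi_{j}\rangle-|\nabla_{A_{j}}\psi_{j}|^{2}$, where $\Delta=d^{*}d$ is the nonnegative Laplacian on functions. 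This gives
\[
\frac{1}{2}\Delta|\psi_{j}|^{2}+|\nabla_{A_{j}}\psi_{j}|^{2}+\frac{|\psi_{j}|^{4}}{4}=-\langle P_{j}\psi_{j},\psi_{j}\rangle-\langle P^{b}_{j}\psi_{j},\psi_{j}\rangle.
\]
Since $\overline{M}$ is compact and $\psi_{j}$ is smooth, $|\psi_{j}|^{2}$ attains its maximum at some $x_{0}$, where $\Delta|\psi_{j}|^{2}(x_{0})\geq0$; discarding this term together with $|\nabla_{A_{j}}\psi_{j}|^{2}\geq0$ yields the pointwise inequality
\[
\frac{|\psi_{j}(x_{0})|^{4}}{4}\leq-\langle P_{j}\psi_{j},\psi_{j}\rangle(x_{0})-\langle P^{b}_{j}\psi_{j},\psi_{j}\rangle(x_{0}).
\]

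The crux is then to dominate the right-hand side by $C|\psi_{j}(x_{0})|^{2}$ with $C$ independent of $j$. The contribution of $P^{b}_{j}$ is immediate from Lemma \ref{bounded}, which bounds $P^{b}_{j}$ uniformly, so that $|\langle P^{b}_{j}\psi_{j},\psi_{j}\rangle|\leq C_{1}|\psi_{j}|^{2}$. For $P_{j}$ the essential observation is that, although its nonzero eigenvalue $-\chi_{j}\frac{\partial^{2}_{t}\varphi_{j}}{\varphi_{j}}$ is \emph{not} bounded above in $j$, it is bounded \emph{below}: from the construction of $\varphi_{j}$ in Section \ref{compactification} one has $\frac{\partial^{2}_{t}\varphi_{j}}{\varphi_{j}}\leq1$ on the support of $\chi_{j}$, since $\varphi_{j}=e^{-t}$ there for $t\leq j+1$ and in the gluing region where $\partial^{2}_{t}\varphi_{j}$ is made very negative the ratio is nonpositive, which only renders the eigenvalue positive. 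Hence $P_{j}\geq -C_{0}\,\mathrm{Id}$ with $C_{0}$ uniform, and therefore $-\langle P_{j}\psi_{j},\psi_{j}\rangle\leq C_{0}|\psi_{j}|^{2}$.

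Combining these estimates at $x_{0}$ gives $\frac{|\psi_{j}(x_{0})|^{4}}{4}\leq(C_{0}+C_{1})|\psi_{j}(x_{0})|^{2}$, whence $|\psi_{j}(x_{0})|^{2}\leq 4(C_{0}+C_{1})=:Q$, a bound independent of $j$; since $x_{0}$ is a global maximum, $|\psi_{j}|^{2}\leq Q$ everywhere, as claimed. The step I expect to require the most care is precisely the sign analysis of $P_{j}$: the dangerous locus is exactly where the warping factor collapses and $\frac{\partial^{2}_{t}\varphi_{j}}{\varphi_{j}}$ blows up, but there it carries the favorable sign, so the lack of a uniform upper bound does no harm to the argument. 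Everything else is the routine maximum-principle bookkeeping.
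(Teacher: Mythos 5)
Your proof is correct and follows essentially the same route as the paper: pair the Weitzenb\"ock identity with $\psi_{j}$, apply the maximum principle to $|\psi_{j}|^{2}$ on the compact manifold $\overline{M}$, and use the uniform lower bound on the zeroth order operator $P_{j}+P^{b}_{j}$. Your expanded sign analysis of the eigenvalue $-\chi_{j}\frac{\partial^{2}_{t}\varphi_{j}}{\varphi_{j}}$ (bounded below because the ratio is $\approx 1$ where $\varphi_{j}=e^{-t}$ and nonpositive precisely where $\partial^{2}_{t}\varphi_{j}$ blows up negatively) is just the detail behind the paper's assertion that $P_{j}+P^{b}_{j}$ is ``by construction'' uniformly bounded from below.
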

\begin{proof}
Since
\begin{align}\notag
\Delta|\psi_{j}|^{2}+2|\nabla_{A_{j}}\psi_{j}|^{2}=2Re\langle\nabla^{*}_{A_{j}}\nabla_{A_{j}}\psi_{j},\psi_{j}\rangle
\end{align}
if $x_{j}$ is a maximum point for $|\psi_{j}|^{2}$ we have
$Re\langle\nabla^{*}_{A_{j}}\nabla_{A_{j}}\psi_{j},\psi_{j}\rangle\geq
0$. In conclusion
\begin{align}\notag
0\geq Re\langle \{P_{j}+P^{b}_{j}\}\psi_{j},
\psi_{j}\rangle_{x_{j}}+\frac{|\psi_{j}|^{4}_{x_{j}}}{4}.
\end{align}
By construction the operator $P_{j}+P^{b}_{j}$ is uniformly bounded
from below, the proof is then complete.
\end{proof}

Since $F^{+}_{A_{j}}-F^{+}_{B_{j}}=q(\psi_{j})$ and by Lemma
\ref{bspin} the norms of the $\psi_{j}$ are uniformly bounded, a
similar estimate holds for $F^{+}_{A_{j}}-F^{+}_{B_{j}}$.

\begin{lemma}\label{1spinor}
There exists a constant $Q>0$ such that
\begin{align}\notag
\|\nabla_{A_{j}}\psi_{j}\|_{L^{2}(\overline{M},g_{j})}\leq Q
\end{align}
for any $j$.
\end{lemma}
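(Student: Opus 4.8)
The plan is to integrate the Weitzenb\"ock-type identity
\[
0=\nabla^{*}_{A_{j}}\nabla_{A_{j}}\psi_{j}+P_{j}\psi_{j}+P^{b}_{j}\psi_{j}+\frac{|\psi_{j}|^{2}}{4}\psi_{j}
\]
derived above over the \emph{closed} manifold $(\overline{M},g_{j})$, after taking the pointwise real inner product with $\psi_{j}$. Since $\overline{M}$ has no boundary, $\nabla^{*}_{A_{j}}$ is the formal $L^{2}$-adjoint of $\nabla_{A_{j}}$ and integration by parts gives $\int_{\overline{M}}\mathrm{Re}\langle\nabla^{*}_{A_{j}}\nabla_{A_{j}}\psi_{j},\psi_{j}\rangle\,d\mu_{g_{j}}=\int_{\overline{M}}|\nabla_{A_{j}}\psi_{j}|^{2}\,d\mu_{g_{j}}$. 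Rearranging the integrated identity isolates exactly the quantity to be estimated:
\[
\int_{\overline{M}}|\nabla_{A_{j}}\psi_{j}|^{2}\,d\mu_{g_{j}}=-\int_{\overline{M}}\mathrm{Re}\langle(P_{j}+P^{b}_{j})\psi_{j},\psi_{j}\rangle\,d\mu_{g_{j}}-\frac{1}{4}\int_{\overline{M}}|\psi_{j}|^{4}\,d\mu_{g_{j}}.
\]

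The quartic term on the right is nonpositive and may simply be discarded, so everything reduces to controlling the indefinite potential term, and here I would invoke precisely the lower bound already exploited in the proof of Lemma \ref{bspin}: the operator $P_{j}+P^{b}_{j}$ is bounded from below uniformly in $j$, say $P_{j}+P^{b}_{j}\geq -C$. Indeed, the eigenvalues of $P_{j}$ are $0$ and $-\chi_{j}\partial^{2}_{t}\varphi_{j}/\varphi_{j}$; on the region where $\partial^{2}_{t}\varphi_{j}\geq 0$ the ratio $\partial^{2}_{t}\varphi_{j}/\varphi_{j}$ remains bounded (it equals $1$ on $[0,j+1]$ and decreases to $0$ across $[j+1,j+1+\delta_{j}]$), so the associated eigenvalue is $\geq -1$, while on the region where $\partial^{2}_{t}\varphi_{j}$ is made very negative the corresponding eigenvalue of $P_{j}$ is \emph{positive}. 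Together with the uniform bound on $P^{b}_{j}$ from Lemma \ref{bounded}, this yields the claimed lower bound, and hence $-\mathrm{Re}\langle(P_{j}+P^{b}_{j})\psi_{j},\psi_{j}\rangle\leq C|\psi_{j}|^{2}$ pointwise.

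Combining these observations with the uniform pointwise bound $|\psi_{j}|^{2}\leq Q$ from Lemma \ref{bspin} and the uniform volume bound $\mathrm{Vol}_{g_{j}}(\overline{M})\leq K$ from Proposition \ref{volumes},
\[
\int_{\overline{M}}|\nabla_{A_{j}}\psi_{j}|^{2}\,d\mu_{g_{j}}\leq C\int_{\overline{M}}|\psi_{j}|^{2}\,d\mu_{g_{j}}\leq CQK,
\]
which is independent of $j$ and gives the asserted uniform bound after taking square roots. The argument is thus essentially a one-line energy identity; the only delicate point is the sign and size analysis of the perturbation operator $P_{j}$, which is where the specific construction of the warping functions $\varphi_{j}$ enters, and this is the step I would regard as the main (though rather mild) obstacle.
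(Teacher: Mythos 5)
Your proposal is correct and follows essentially the same route as the paper: integrate the Weitzenb\"ock identity over the closed manifold $(\overline{M},g_{j})$, discard the nonpositive quartic term, bound the potential term via the uniform lower bound on $P_{j}+P^{b}_{j}$, and conclude with Lemma \ref{bspin} and the uniform volume bound of Proposition \ref{volumes}. The only difference is that you spell out the eigenvalue analysis behind the lower bound on $P_{j}+P^{b}_{j}$, which the paper simply asserts ``by construction'' in the proof of Lemma \ref{bspin}.
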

\begin{proof}
We have
\begin{align}\notag
0&=\int_{\overline{M}}Re\langle\nabla^{*}_{A_{j}}\nabla_{A_{j}}\psi_{j},
\psi_{j}\rangle d\mu_{g_{j}}+\int_{\overline{M}}Re\langle
\{P^{b}_{j}+P_{j}\}\psi_{j}, \psi_{j}\rangle d\mu_{g_{j}}\\
\notag &+\frac{1}{2}\int_{\overline{M}}Re\langle
q(\psi_{j})\psi_{j},\psi_{j}\rangle d\mu_{g_{j}}\\ \notag
&=\|\nabla_{A_{j}}\psi_{j}\|^{2}_{L^{2}(\overline{M},
g_{j})}+\int_{\overline{M}}Re\langle \{P^{b}_{j}+P_{j}\}\psi_{j},
\psi_{j}\rangle
d\mu_{g_{j}}+\frac{1}{4}\int_{\overline{M}}|\psi_{j}|^{4}d\mu_{g_{j}}
\end{align}
but now
\begin{align}\notag
\int_{\overline{M}}Re\langle \{P^{b}_{j}+P_{j}\}\psi_{j},
\psi_{j}\rangle d\mu_{g_{j}}\geq
-k\|\psi_{j}\|^{2}_{L^{2}(\overline{M},g_{j})}
\end{align}
which then implies
\begin{align}\notag
\|\nabla_{A_{j}}\psi_{j}\|^{2}_{L^{2}(\overline{M},g_{j})}&\leq k
\|\psi_{j}\|^{2}_{L^{2}(\overline{M},
g_{j})}-\frac{1}{4}\|\psi_{j}\|^{4}_{L^{4}(\overline{M}, g_{j})}\\
\notag &\leq k\|\psi_{j}\|^{2}_{L^{2}(\overline{M}, g_{j})}.
\end{align}
Since by Proposition \ref{volumes} the volumes of the Riemannian
manifolds $(\overline{M},g_{j})$ are uniformly bounded, the lemma
follows from Lemma \ref{bspin}.
\end{proof}

Define $C_{j}=A_{j}-B_{j}$ and let $C$ be a fixed smooth connection
on the line bundle $L\otimes\mathcal{O}(-\Sigma)$. By the Hodge
decomposition theorem we can write
\begin{align}\notag
C_{j}=C+\eta_{j}+\beta_{j}
\end{align}
where $\eta_{j}$ is $g_{j}$-harmonic and
$\beta_{j}\in(\mathcal{H}^{1}_{g_{j}})^{\perp}$. Thus
\begin{align}\notag
F^{+}_{C_{j}}=q(\psi_{j})=F^{+}_{C}+d^{+}\beta_{j}.
\end{align}
Since $C$ is a fixed connection 1-form,
$\|F_{C}\|_{L^{2}(\overline{M}, g_{j})}$ is uniformly bounded in the
index $j$. As a result, there exists $Q>0$ such that
\begin{align}\notag
\|d^{+}\beta_{j}\|_{L^{2}(\overline{M},g_{j})}\leq Q
\end{align}
for any $j$. By the Stokes' theorem
\begin{align}\notag
\|d^{+}\beta_{j}\|^{2}_{L^{2}(\overline{M},g_{j})}=\|d^{-}\beta_{j}\|^{2}_{L^{2}(\overline{M},g_{j})}
\end{align}
and we then obtain an uniform upper bound on
$\|d\beta_{j}\|_{L^{2}(\overline{M},g_{j})}$. By gauge fixing, see
for example Section 5.3. in \cite{Morgan1}, we can always assume
$d^{*}\beta_{j}=0$ and $\alpha_{j}$ bounded in $H^{1}(\overline{M},
\setR)$. The Poincar\'e inequality given in Proposition
\ref{biquard1} can then be used to conclude that
\begin{align}\notag
c\|\beta_{j}\|^{2}_{L^{2}(\overline{M}, g_{j})}\leq
\|d\beta_{j}\|^{2}_{L^{2}(\overline{M},g_{j})}\leq 2Q.
\end{align}
By a diagonal argument we can now extract a limit
$\beta_{j}\rightarrow \beta$ with $\beta\in L^{2}_{1}(M,g)$.
Similarly we extract a limit $\eta_{j}\rightarrow \eta$ with $\eta
\in L^{2}(M,g)$ and harmonic with respect to $g$, see Proposition
\ref{1form}.

Define $a_{j}=\eta_{j}+\beta_{j}$ that by construction satisfies
$d^{*}a_{j}=0$. If we fix a compact set $K\subset M$, there exists
$j_{0}$ such that for any $j\geq j_{0}$ the connection $B_{j}$
restricted to $K$ is trivial. Thus, for any $j\geq j_{0}$ we have
$A_{j}=C_{j}$ and then $C=A_{j}-a_{j}$. We know that $a_{j}$ is
uniformly bounded in $L^{2}(\overline{M},g_{j})$, by using Lemma
\ref{1spinor} we conclude that $\|\nabla_{C}\psi_{j}\|^{2}_{L^{2}(K,
g_{j})}$ is bounded independently of $j$. On this compact set $K$ we
can therefore extract a weak limit of the sequence
$\{\psi_{j}\}\rightharpoonup\psi$. By using a diagonal argument and
recalling that in a Hilbert space the norm is lower semicontinuous
with respect the weak convergence, we obtain a limit $\psi \in
L^{2}_{1}(M,g)$. Now, a bootstrap argument based on the
\emph{ellipticity} of the Seiberg-Witten equations can be used to
conclude that the $\psi_{j}$ are indeed smooth and that they
converge, in the $C^{\infty}$-topology on compact sets,  to $\psi$.
By Lemma \ref{bspin}, $\psi$ is uniformly bounded over $M$.

Let us summarize the discussion above into a theorem.

\begin{main}\label{convergence}
Fix a \emph{$Spin^{c}$} structure on $\overline{M}$ with determinant
line bundle $L$. Let $g$ be a metric on $M$ asymptotic to a
Poincar\'e metric in the $C^{2}$-topology, and let $\{g_{j}\}$ the
sequence of metrics on $\overline{M}$ that approximate $g$. Let
$\{(A_{j}, \psi_{j})\}$ be the sequence of solutions of the SW
equations with perturbations $\{F^{+}_{B_{j}}\}$ on
$\{(\overline{M},g_{j})\}$. Then, up to gauge transformations, the
solutions $\{(A_{j},\psi_{j})\}$ converge, in the
$C^{\infty}$-topology on compact sets, to a solution $(A,\psi)$ of
the unperturbed SW equations on $(M,g)$ such that
\begin{center}
\begin{itemize}
\item[-] A=C+a where C is a fixed smooth connection on
$L\otimes\mathcal{O}(-\Sigma)$, $d^{*}a=0$
and $a\in L^{2}_{1}(\Omega^{1}_{g}(M))$;\\
\item[-] $\psi\in L^{2}_{1}(M,g)$ and there exists $Q>0$ such that $sup_{x\in M}|\psi(x)|\leq Q$.
\end{itemize}
\end{center}
\end{main}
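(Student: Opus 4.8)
The plan is to assemble the uniform estimates established above into a single compactness argument, extracting a subsequence of the perturbed solutions whose limit solves the unperturbed system with the stated structure. First I would record that, since $i2\pi\omega_j=-F_{B_j}$, the perturbed equations are equivalent to $\mathcal{D}_{A_j}\psi_j=0$ together with $F^+_{A_j}-F^+_{B_j}=q(\psi_j)$, and that the decompositions of Lemma \ref{bounded} isolate the only unbounded contributions, namely those proportional to $\partial^2_t\varphi_j/\varphi_j$, in both the scalar curvature and in $F_{B_j}$. Everything then hinges on showing that these singular contributions enter with a favorable sign.

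The first batch of estimates is pointwise and $L^2$ control of the spinor. Using the twisted Lichnerowicz formula and the maximum principle exactly as in Lemma \ref{bspin}, I obtain $|\psi_j|^2\leq Q$ uniformly in $j$; the key point is that although $P_j$ carries the blowing-up factor, its nonzero eigenvalue is $-\chi_j\partial^2_t\varphi_j/\varphi_j$, and since the ratio $\partial^2_t\varphi_j/\varphi_j$ is bounded \emph{above} uniformly in $j$ (it equals $1$ on the cusp and is controlled, or negative, across the gluing region), this eigenvalue is bounded below, so $P_j+P^b_j$ is uniformly bounded below. Integrating the Weitzenb\"ock identity against $\psi_j$ and invoking the uniform volume bound of Proposition \ref{volumes} then yields the uniform $L^2$ bound on $\nabla_{A_j}\psi_j$ of Lemma \ref{1spinor}.

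Next I would control the connections. Setting $C_j=A_j-B_j$ and Hodge-decomposing $C_j=C+\eta_j+\beta_j$ relative to $g_j$, with $\eta_j$ harmonic and $\beta_j\perp\mathcal{H}^1_{g_j}$, the curvature identity $q(\psi_j)=F^+_C+d^+\beta_j$ together with the uniform spinor bound gives a uniform $L^2$ bound on $d^+\beta_j$; since $\|d^+\beta_j\|^2=\|d^-\beta_j\|^2$ by Stokes' theorem on the closed manifold $\overline{M}$, this upgrades to a uniform bound on $\|d\beta_j\|$. After gauge fixing to arrange $d^*\beta_j=0$ with the harmonic part bounded in $H^1(\overline{M};\setR)$, the uniform Poincar\'e inequality for $1$-forms of Proposition \ref{biquard1} converts this into a uniform $L^2_1$ bound on $\beta_j$. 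A diagonal argument then extracts $\beta_j\to\beta\in L^2_1(M,g)$ and, by Proposition \ref{1form}, $\eta_j\to\eta$ harmonic in $L^2(M,g)$; writing $a=\eta+\beta$ gives $d^*a=0$ as required.

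Finally, I would fix a compact $K\subset M$ and take $j$ large enough that $B_j|_K$ is trivial, so that $A_j=C_j$ and $C=A_j-a_j$ on $K$; the bound on $\nabla_{A_j}\psi_j$ then becomes a bound on $\nabla_C\psi_j$, and weak $L^2_1$ compactness with lower semicontinuity of the norm produces $\psi_j\rightharpoonup\psi\in L^2_1(M,g)$, while elliptic bootstrapping from the ellipticity of the Seiberg-Witten system upgrades this to $C^\infty$-convergence on compact sets. Since $F^+_{B_j}$ vanishes on $K$ for large $j$, the limit $(A,\psi)$ with $A=C+a$ solves the unperturbed equations, and $\psi$ is uniformly bounded by Lemma \ref{bspin}. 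The hard part throughout is the non-compactness: \emph{every} estimate must be uniform in $j$, and this uniformity rests entirely on the delicately engineered warping profile $\varphi_j$ and on the uniform Poincar\'e inequalities of Sections \ref{analisi1} and \ref{analisi2} (themselves dependent on Theorem \ref{Zucker}); a secondary subtlety, needed to guarantee that the solution is genuinely irreducible as promised, is to verify that $\psi\not\equiv 0$, which must be extracted from a uniform lower bound keeping the perturbed spinors away from zero.
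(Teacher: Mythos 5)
Your proposal follows the paper's own proof essentially step for step: the same rewriting of the perturbed equations using Lemma \ref{bounded}, the same Lichnerowicz/maximum-principle bounds (Lemmas \ref{bspin} and \ref{1spinor}), the same Hodge decomposition $C_j=C+\eta_j+\beta_j$ with gauge fixing and the uniform Poincar\'e inequality of Proposition \ref{biquard1}, and the same diagonal/bootstrap extraction of the limit on compact sets. The only divergence is your closing remark on irreducibility: that is not part of the statement being proved, and the paper establishes it afterwards not via a uniform lower bound keeping the spinors away from zero, but cohomologically, by combining Proposition \ref{exceptional}, Lemma \ref{lebruno} and Proposition \ref{intersection} (if $\psi\equiv 0$ then $F^+_A=0$, contradicting $(c^+_1(\mathcal{L}))^2>0$).
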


It remains to show that Theorem \ref{convergence} can be
successfully applied in the case of a minimal pair $(\overline{M},
\Sigma)$ of logarithmic general type. Furthermore, we have to prove
the solution $(A, \psi)$ so constructed is irreducible.

Recall that by construction $\overline{M}$ is a K\"ahler surface.
Let us consider the standard \emph{$Spin^{c}$} structure on
$\overline{M}$ associated to the complex structure $J$. Let $\omega$
be a K\"ahler metric compatible with $J$, then on $(\overline{M},
\omega, J)$ it is easy to construct an irreducible solution of the
perturbed SW equations. More precisely, $\overline{\psi}=(1, 0)\in
\Omega^{0, 0}\oplus\Omega^{0, 2}$ and the Chern connection
$\overline{A}$ on $K^{-1}_{\overline{M}}$ are solution of
\begin{equation*}
\begin{cases}
\mathcal{D}_{\overline{A}} \overline{\psi}=0\\
F^{+}_{\overline{A}}+i(\frac{s_{\omega}+1}{4})\omega=q(\overline{\psi})
\end{cases}
\end{equation*}
where by $s_{\omega}$ we denote the scalar curvature of the
Riemannian metric associated to $\omega$. Now, if
$b^{+}_{2}(\overline{M})>1$ a cobordism argument \cite{Morgan1}
allows us to conclude that the solutions $(A_{j}, \psi_{j})$ of
\ref{perturbed} are indeed irreducible. On the other hand, when
$b^{+}_{2}(\overline{M})=1$ we have to check that the cohomology
classes represented by $\omega$ and $c_{1}(\mathcal{L})$ are in the
same chamber. First, recall that the logarithmic canonical bundle
$\mathcal{L}$ associated to $(\overline{M}, \Sigma)$ has positive
self-intersection, see Proposition \ref{exceptional}. Moreover,
since the Kodaira dimension of $\mathcal{L}$ is non-negative we have
that for some integer $m$ the divisor associated to $m\mathcal{L}$
is effective. We conclude that $\omega\cdot\mathcal{L}>0$. It then
follows that
\begin{align}\notag
(t\omega+(1-t)\mathcal{L})^{2}>0
\end{align}
for any $t\in [0, 1]$. The $(A_{j}, \psi_{j})$ are then irreducible.
Finally, combining Proposition \ref{exceptional}, Lemma
\ref{lebruno} and Proposition \ref{intersection}, we get that the
solution $(A, \psi)$ given in Theorem \ref{convergence} is
irreducible.

\section{Applications}\label{application}

In a decade long effort, Claude LeBrun has brought to light a
beautiful and deep connection between Seiberg-Witten theory and the
Riemannian geometry of closed 4-manifolds; see for example
\cite{LeBrun5}, \cite{LeBrun7} and the bibliography therein. Many of
these results still represent the cutting edge of our current
understanding of Riemannian geometry in real dimension four.

In this section, we show how the analytical results previously
derived in this work are robust enough to prove suitable
generalizations of many of LeBrun's results.

Let us begin by reviewing Chern-Weil theory in the Poincar\'e
metric. Recall that for a compact oriented 4-manifold $N$, the
Gauss-Bonnet and Hirzebruch theorems state that
\begin{align}\notag
\chi(N)=\int_{N}E(g)d\mu_{g}, \quad \sigma(N)=\int_{N}L(g)d\mu_{g}
\end{align}
where $E(g)$ and $L(g)$ are respectively the Euler and signature
forms associated to the metric $g$.

For non-compact manifolds the above curvature integrals might be not
defined or dependent on the choice of the metric. Nevertheless, if
the manifold has finite volume and bounded curvature these curvature
integrals are defined. In this case it remains to study their metric
dependence. Here, we want to compute
\begin{align}\notag
\chi(M,g)=\int_{M}E(g)d\mu_{g}, \quad
\sigma(M,g)=\int_{M}L(g)d\mu_{g}
\end{align}
when $M$ is obtained from a pair $(\overline{M}, \Sigma)$ of
logarithmic general type and $g$ is $C^{2}$-asymptotic to a
Poincar\'e metric at infinity. The following proposition computes
the characteristic numbers of $(M,g)$ in terms of
$\chi(\overline{M})$, $\sigma(\overline{M})$, and a contribution
coming from the cusp ends of $M$.
\begin{proposition}\label{gaussbonnet1}
Let $M$ be equipped with a metric $g$ asymptotic in the
$C^{2}$-topology to a Poincar\'e metric. Then, we have the
equalities
\begin{align}\notag
\chi(M,g)=\chi(\overline{M})-\chi(\Sigma)=\chi(M), \quad
\sigma(M,g)=\sigma(\overline{M})-\frac{1}{3}\Sigma^{2},
\end{align}
where by $\Sigma^{2}$ we indicate the self-intersection of the
boundary divisor.
\end{proposition}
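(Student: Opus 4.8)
The plan is to reduce both equalities to the exact Gauss--Bonnet and Hirzebruch signature theorems on the compact approximations $(\overline{M},g_j)$ constructed in Section \ref{compactification}, and then to pass to the limit $j\to\infty$, identifying the contributions coming from the collapsing cusp ends. First I would record the purely topological identity $\chi(\overline{M})=\chi(M)+\chi(\Sigma)$. This follows from inclusion--exclusion: a closed tubular neighborhood $U$ of $\Sigma$ is a disk bundle that deformation retracts onto $\Sigma$, its boundary is the circle bundle $N_{\Sigma_i}$ with $\chi(\partial U)=\chi(S^{1})\chi(\Sigma)=0$, and $\overline{M}\setminus\mathrm{int}(U)$ is homotopy equivalent to $M$. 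Next I would note that the curvature integrals $\int_{M}E(g)\,d\mu_g$ and $\int_{M}L(g)\,d\mu_g$ converge absolutely: by the explicit asymptotic form (\ref{model}), together with Proposition \ref{scalar curvature} and Proposition \ref{volumes}, the metric $g$ has finite volume and curvature controlled at every order on the cusps, so $E(g)$ and $L(g)$ are integrable and $\chi(M,g)$, $\sigma(M,g)$ are well defined.

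The core step is the limiting scheme. On each compact $(\overline{M},g_j)$ the classical theorems give exactly $\chi(\overline{M})=\int_{\overline{M}}E(g_j)\,d\mu_{g_j}$ and $\sigma(\overline{M})=\int_{\overline{M}}L(g_j)\,d\mu_{g_j}$. I would split $\overline{M}$ into the region $\{t\le j\}$, on which $g_j\equiv g$ by construction, and the cap $\{t\ge j\}$. As $j\to\infty$ the first piece converges to $\int_{M}E(g)\,d\mu_g=\chi(M,g)$, respectively to $\sigma(M,g)$, so the cap integrals converge to fixed numbers $I_E$ and $I_L$, yielding $\chi(\overline{M})=\chi(M,g)+I_E$ and $\sigma(\overline{M})=\sigma(M,g)+I_L$.

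It then remains to evaluate $I_E$ and $I_L$. For the Euler form I would apply Gauss--Bonnet with boundary to the cap, which is a disk bundle over $\Sigma$ with boundary the circle bundle $N_T$: the interior integral plus Chern's transgression boundary term equals $\chi(\Sigma)$, and since the warping factor $\varphi_j$ collapses the circle fiber the boundary term tends to zero, so $I_E=\chi(\Sigma)$ and hence $\chi(M,g)=\chi(\overline{M})-\chi(\Sigma)=\chi(M)$. For the signature I would run the analogous Atiyah--Patodi--Singer boundary analysis: the transgression of $L=\tfrac{1}{3}p_1$ over the collapsing circle bundle, whose Euler number is the normal-bundle self-intersection $\Sigma^{2}$, contributes $\tfrac{1}{3}\Sigma^{2}$, giving $I_L=\tfrac{1}{3}\Sigma^{2}$ and therefore $\sigma(M,g)=\sigma(\overline{M})-\tfrac{1}{3}\Sigma^{2}$.

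The main obstacle is precisely this last evaluation. One must verify that the volume-collapsing in the fiber direction, encoded in the warping functions $\varphi_j$, does not generate spurious contributions in the signature transgression, and that the resulting boundary defect is exactly $\tfrac{1}{3}\Sigma^{2}$ rather than some other multiple depending on the geometry of the collapse. The Euler case is comparatively soft, since its transgression vanishes under the collapse and the limiting value $\chi(\Sigma)$ is fixed by topology alone; the signature defect is genuinely analytic and is where the asymptotic control of $g$ in the $C^{2}$-topology, via (\ref{model}), must be used most carefully.
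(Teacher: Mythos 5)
Your overall scheme---exact Gauss--Bonnet and Hirzebruch on the compact approximations $(\overline{M},g_j)$, splitting off $\{t\le j\}$ where $g_j\equiv g$, and evaluating the cap contributions $I_E$, $I_L$---is precisely the computation the paper's proof points to: Proposition 3.4 of \cite{Biquard} carries out exactly this compactification-and-limit argument, and the paper simply defers to it (or to \cite{Stern}). Your topological identity $\chi(\overline{M})=\chi(M)+\chi(\Sigma)$, the absolute convergence of the curvature integrals, and the Euler half of the argument are all correct and complete: Chern's boundary transgression is a \emph{local} integral of uniformly bounded curvature and second-fundamental-form terms over the slice $\{t=j\}$, whose volume collapses, so it vanishes in the limit and $I_E=\chi(\Sigma)$.

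The signature half, however, has a genuine gap, and it sits exactly at the step you yourself flag as the main obstacle; flagging it does not close it. You assert that ``the transgression of $L=\frac{1}{3}p_1$ over the collapsing circle bundle \dots contributes $\frac{1}{3}\Sigma^{2}$,'' but a local transgression term would vanish under the collapse by the very same volume argument you use in the Euler case, and you would then be forced to the conclusion $I_L=\sigma(\mathrm{cap})=\sum_i \mathrm{sign}(\Sigma_i^2)$, an integer---which is the \emph{wrong} answer (for a ball-quotient cusp with $\Sigma_i^2=-1$ the proposition requires the contribution $-\frac{1}{3}$, not $-1$). The point, which your write-up obscures by calling everything ``transgression,'' is that the signature boundary correction is not local: in the Atiyah--Patodi--Singer theorem it is the eta invariant of the boundary circle bundles, a global spectral invariant that survives the collapse, and the bookkeeping $I_L=\sigma(\mathrm{cap})+\lim_j(\text{boundary correction})$ obliges you to prove that this correction converges to $\frac{e_i}{3}-\mathrm{sign}(e_i)$ on each component, $e_i=\Sigma_i^2$. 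That is an adiabatic-limit computation (Atiyah--Patodi--Singer II for invariant metrics; Bismut--Cheeger and Dai in general), not a soft estimate, and its outcome genuinely depends on the collapse being of cusp type (fibers shrinking, base fixed)---a different degeneration would give a different limit of the $L$-integral. The alternative, which is what ``generalizing a computation of Biquard'' actually means, is to bypass spectral invariants entirely and compute $\int_{\mathrm{cap}}L(g_j)\,d\mu_{g_j}$ directly from the curvature of the warped product $dt^2+\varphi_j^2\eta_i^2+g_{\Sigma_i}$: after fiber integration the normal-bundle curvature $d\eta_i$ produces the factor $\Sigma_i^2$, and the coefficient $\frac{1}{3}$ emerges from the $t$-integral of $(\partial_t\varphi_j)^2\,\partial_t^2\varphi_j$, i.e.\ from $\frac{1}{3}(\partial_t\varphi_j)^3$ evaluated as $\partial_t\varphi_j$ runs from $-e^{-j}$ to $-1$. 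Either route closes the argument; as written, the decisive constant $\frac{1}{3}$ is asserted rather than derived.
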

\begin{proof}
A proof can be given generalizing a computation of Biquard, see
Proposition 3.4. in \cite{Biquard}. Alternatively, one can apply a
very general result of M. Stern, see Theorem 1.1 in \cite{Stern}.
\end{proof}

We can now study the Riemannian functional
$\int_{M}s^{2}_{g}d\mu_{g}$ restricted to the space of metrics with
Poincar\'e type asymptotic.

\begin{main}\label{minima}
Let $(\overline{M}, \Sigma)$ be a minimal pair of logarithmic
general type. Assume the boundary divisor $\Sigma$ to be smooth and
without rational components. Let $M$ be equipped with a metric $g$
asymptotic to a Poincar\'e metric in the $C^{2}$-topology. Then
\begin{align}\notag
\frac{1}{32\pi^{2}}\int_{M}s^{2}_{g}d\mu_{g}\geq
(c^{+}_{1}(\mathcal{L}))^{2}
\end{align}
with equality if and only if $g$ has constant negative scalar
curvature, and is K\"ahler with respect to a complex structure
compatible with the standard \emph{$Spin^{c}$} structure on $M$.
\end{main}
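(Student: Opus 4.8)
The plan is to combine the Seiberg-Witten scalar curvature estimate of Theorem \ref{bochner} with the existence and irreducibility of the limit solution guaranteed by Theorem \ref{convergence}, and then to identify the relevant topological quantity $(c^{+}_{1}(\mathcal{L}))^{2}$ in terms of the log-canonical bundle. First I would fix the standard $Spin^{c}$ structure on $\overline{M}$ induced by the complex structure $J$, whose determinant line bundle is $K^{-1}_{\overline{M}}$; after twisting by $\mathcal{O}(\Sigma)$ the relevant bundle on $M$ becomes (the dual of) $\mathcal{L}=K_{\overline{M}}+\Sigma$, so that the $L^{2}$-Chern class entering the estimate is precisely $c_{1}(\mathcal{L})$. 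The discussion following Theorem \ref{convergence} already shows that, under the stated minimality and log-general-type hypotheses, the constructed solution $(A,\psi)$ on $(M,g)$ is genuinely irreducible: Proposition \ref{exceptional} gives $\mathcal{L}^{2}>0$, and the chamber computation $(t\omega+(1-t)\mathcal{L})^{2}>0$ rules out reducibles.

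With irreducibility in hand, I would apply Theorem \ref{bochner} directly to $(A,\psi)$ with $\mathfrak{c}$ the chosen $Spin^{c}$ structure and determinant line bundle $\mathcal{L}$, obtaining
\begin{align}\notag
\int_{M}s^{2}_{g}d\mu_{g}\geq 32\pi^{2}(c^{+}_{1}(\mathcal{L}))^{2},
\end{align}
which upon dividing by $32\pi^{2}$ is exactly the asserted inequality. The equality clause is inherited verbatim from Theorem \ref{bochner}: equality forces $g$ to have constant negative scalar curvature and to be Kähler for a complex structure compatible with $\mathfrak{c}$. Thus the proof is structurally short once the earlier machinery is invoked; the main content is verifying that the hypotheses of Theorem \ref{bochner} are met, namely that $(A,\psi)\in L^{2}_{1}(M,g)$ is an irreducible solution and that $g$ is $C^{2}$-asymptotic to a Poincaré metric, both of which are supplied by Theorem \ref{convergence} and its accompanying irreducibility argument.

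The step I expect to require the most care is the identification of $(c^{+}_{1}(\mathcal{L}))^{2}$ and the justification that this quantity is the correct finite-volume replacement for the compact Chern-number estimate. Here I would lean on Theorem \ref{Zucker}, which identifies $H^{*}_{2}(M)$ with $H^{*}(\overline{M};\setR)$, so that the $L^{2}$-harmonic representative of $c_{1}(\mathcal{L})$ corresponds to an honest cohomology class on $\overline{M}$, and on Proposition \ref{intersection}, which guarantees that the self-dual and anti-self-dual $L^{2}$-norms are the limits of the corresponding norms on $(\overline{M},g_{j})$. This is what makes $(c^{+}_{1}(\mathcal{L}))^{2}$ well defined and compatible with the convergence scheme. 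Since $\mathcal{L}$ is nef with $\mathcal{L}^{2}>0$ by Lemma \ref{numerical} and Proposition \ref{exceptional}, the self-dual part carries the positivity, so the resulting bound is nontrivial; I would note that in the Kähler-Einstein equality case one recovers $(c^{+}_{1}(\mathcal{L}))^{2}=\mathcal{L}^{2}$, tying the estimate back to the algebraic invariants of the pair $(\overline{M},\Sigma)$.
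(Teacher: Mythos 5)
Your proposal is correct and follows essentially the same route as the paper: apply Theorem \ref{convergence} with the standard $Spin^{c}$ structure (whose limit determinant bundle, after the $\mathcal{O}(-\Sigma)$ twist, is dual to $\mathcal{L}$), invoke the irreducibility discussion following that theorem (Proposition \ref{exceptional}, Lemma \ref{lebruno}, Proposition \ref{intersection}), and then conclude with the Bochner estimate of Theorem \ref{bochner}. Your added remarks on the role of Zucker's theorem and Proposition \ref{intersection} in making $(c^{+}_{1}(\mathcal{L}))^{2}$ well defined simply make explicit what the paper leaves implicit.
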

\begin{proof}
The first step is to apply Theorem \ref{convergence} with respect to
the standard \emph{$Spin^{c}$} structure on $\overline{M}$. The
solution so obtained $(A, \psi)$ is then irreducible and $\psi\in
L^{2}_{1}$. Finally, a Bochner type argument as in Theorem
\ref{bochner} concludes the proof.
\end{proof}

Regarding the equality case in Theorem \ref{minima}, H. Auvray has
recently proved a \emph{uniqueness} result, see \cite{Auvray}. More
precisely, he is able to show the uniqueness of K\"ahler metrics, in
arbitrary classes, with constant scalar curvature and Poincar\'e
type asymptotic. Up to now, the result is proved for pairs
$(\overline{M}, \Sigma)$ with \emph{ample} log-canonical bundle. In
the case of log-surfaces with smooth boundary, the ampleness of the
log-canonical bundle can be neatly characterized, see Proposition
\ref{ample}.

Finally, Theorem \ref{minima} can be used to prove the following:

\begin{corollary}
Let $(\overline{M}, \Sigma)$ be as above. Then
\begin{align}\notag
\frac{1}{32\pi^{2}}\int_{M}s^{2}_{g}d\mu_{g}\geq
(K_{\overline{M}}+\Sigma)^{2}
\end{align}
with equality iff the metric $g$ is K\"ahler-Einstein with respect
to a complex structure compatible with the standard
\emph{$Spin^{c}$} structure on $M$.
\end{corollary}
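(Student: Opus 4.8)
The plan is to derive the corollary from Theorem \ref{minima} by trading the $L^{2}$-quantity $(c^{+}_{1}(\mathcal{L}))^{2}$ for the purely topological number $(K_{\overline{M}}+\Sigma)^{2}$, and then to sharpen the equality discussion. Recall $\mathcal{L}=K_{\overline{M}}+\Sigma$, so the target right-hand side is exactly $\mathcal{L}^{2}$. First I would prove the elementary inequality $(c^{+}_{1}(\mathcal{L}))^{2}\geq \mathcal{L}^{2}$. Let $h$ be the $g$-harmonic $L^{2}$-representative of $c_{1}(\mathcal{L})$ and write $h=h^{+}+h^{-}$ for its (anti-)self-dual decomposition. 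Since self-dual and anti-self-dual forms are pointwise orthogonal and $*h^{\pm}=\pm h^{\pm}$, one has $h\wedge h=(|h^{+}|^{2}-|h^{-}|^{2})\,d\mu_{g}$, whence by Theorem \ref{Zucker} and the definition of the $L^{2}$-intersection form
$$\mathcal{L}^{2}=(c_{1}(\mathcal{L}))^{2}=\|h^{+}\|^{2}_{L^{2}}-\|h^{-}\|^{2}_{L^{2}}=(c^{+}_{1}(\mathcal{L}))^{2}-\|h^{-}\|^{2}_{L^{2}}.$$
As $\|h^{-}\|^{2}_{L^{2}}\geq 0$, this gives $(c^{+}_{1}(\mathcal{L}))^{2}\geq \mathcal{L}^{2}=(K_{\overline{M}}+\Sigma)^{2}$, with equality if and only if $h^{-}=0$. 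Chaining this with Theorem \ref{minima} yields the stated lower bound.

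For the equality case, note that equality in the corollary forces equality in \emph{both} the Theorem \ref{minima} estimate and the algebraic inequality above. By Theorem \ref{minima} the former holds iff $g$ is K\"ahler with constant negative scalar curvature, while the latter holds iff $h^{-}=0$, i.e.\ the $L^{2}$-harmonic representative of $c_{1}(\mathcal{L})$ is self-dual. The remaining task is to show that these two conditions taken together are equivalent to $g$ being K\"ahler-Einstein.

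If $g$ is K\"ahler-Einstein (with the negative Einstein constant forced by log-general type), then the Ricci form satisfies $\rho=\tfrac{s}{4}\omega$ with $s<0$ constant, so $c_{1}(\mathcal{L})$ is represented by a multiple of the self-dual form $\omega$; this gives both $h^{-}=0$ and constant negative scalar curvature. Conversely, assume $g$ is K\"ahler with constant scalar curvature and $h^{-}=0$. On $M$ the defining section of $\Sigma$ trivialises $\mathcal{O}(\Sigma)$ away from $\Sigma$, so $\mathcal{L}|_{M}=K_{M}$ and the Ricci form $\rho$ represents $-2\pi c_{1}(\mathcal{L})$ in $L^{2}$-cohomology. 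Decompose $\rho=\tfrac{s}{4}\omega+\rho_{0}$ into its trace and primitive parts; the primitive $(1,1)$-part $\rho_{0}$ is anti-self-dual and $\tfrac{s}{4}\omega$ is self-dual. Because $g$ is K\"ahler and $s$ is constant, $d\rho=0$ together with $ds\wedge\omega=0$ gives $d*\rho=-d\rho_{0}=0$, so $\rho$ is closed and coclosed, hence the $L^{2}$-harmonic representative of $-2\pi c_{1}(\mathcal{L})$. Thus $h^{-}=0$ forces its anti-self-dual component $\rho_{0}$ to vanish, giving $\rho=\tfrac{s}{4}\omega$, i.e.\ $g$ is Einstein.

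The main obstacle is this last identification: justifying, under the $C^{2}$-Poincar\'e asymptotics, that the Ricci form $\rho$ is genuinely $L^{2}$, is closed and coclosed once $s$ is constant, and represents $-2\pi c_{1}(\mathcal{L})$ under Zucker's isomorphism (Theorem \ref{Zucker}), so that vanishing of the $L^{2}$-harmonic class $c^{-}_{1}(\mathcal{L})$ transfers to pointwise vanishing of the primitive Ricci curvature $\rho_{0}$. This rests on the finite-volume and bounded-curvature properties of $g$ and on the $L^{2}$-Hodge theory reviewed in Section \ref{l2lebrun}; everything else is routine K\"ahler linear algebra.
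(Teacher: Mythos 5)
Your proof is correct and is precisely the argument the paper intends but leaves implicit (it is LeBrun's classical argument, cf.\ \cite{LeBrun5}): the self-dual/anti-self-dual splitting of the harmonic representative gives $(c^{+}_{1}(\mathcal{L}))^{2}\geq\mathcal{L}^{2}$, and the equality case reduces, via the harmonicity of the Ricci form for a K\"ahler metric of constant scalar curvature, to the vanishing of its primitive (anti-self-dual) part, i.e.\ to the Einstein condition. The point you flag as the main obstacle --- that the $L^{2}$-intersection form agrees with the topological one and that the Ricci form represents $-2\pi c_{1}(\mathcal{L})$ under Zucker's isomorphism --- is already supplied by the paper's machinery, namely Proposition \ref{intersection} together with the structure $A=C+a$, $a\in L^{2}_{1}(\Omega^{1}_{g}(M))$, of the solution produced by Theorem \ref{convergence}.
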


In the last thirty years, the problem of constructing
K\"ahler-Einstein metrics for pairs $(\overline{M}, \Sigma)$ of
log-general type was addressed by several authors, see for example
\cite{Cheng}, \cite{kobayashi} and \cite{Tian}. In \cite{Cheng},
\cite{kobayashi}, a unique K\"ahler-Einstein metric is constructed
for any pair $(\overline{M}, \Sigma)$ with ample log-canonical
bundle. This metric is \emph{quasi-isometric} to a Poincar\'e type
metric at infinity.

A more general existence and uniqueness theorem is proved by
Tian-Yau in \cite{Tian}. Nevertheless, the asymptotic behavior of
the K\"ahler-Einstein metric so constructed is much more complicated
to understand. In fact, it must be quite different from a Poincar\'e
type metric as one can check on explicit examples coming from the
theory of locally symmetric varieties.

We conclude this section by presenting an obstruction for Einstein
metrics on blow-ups.

\begin{main} \label{ostruzione}
Let $(M,g)$ as above. Let $M^{'}$be obtained from $M$ by blowing up
$k$ points. If $k\geq\frac{2}{3}(K_{\overline{M}}+\Sigma)^{2}$, then
$M^{'}$ does not admit a Poincar\'e type Einstein metric.
\end{main}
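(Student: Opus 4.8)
The plan is to prove Theorem \ref{ostruzione} by combining the scalar-curvature lower bound of Theorem \ref{minima} with the topological Gauss-Bonnet/signature data of Proposition \ref{gaussbonnet1}, following the classical strategy of LeBrun for closed manifolds. The key observation is that an Einstein metric forces a relation between the characteristic numbers $\chi$ and $\sigma$ and the pointwise curvature norms, which can be played against the Seiberg-Witten lower bound on $\int s^{2}\,d\mu$. On a blow-up one point at a time degrades the admissible bound, so that if one blows up too many points the required inequalities become contradictory.

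\medskip
First I would set up the characteristic-number bookkeeping on the blow-up $M'$. Blowing up $k$ points replaces $(\overline{M},\Sigma)$ by $(\overline{M}',\Sigma')$ where $\overline{M}'$ is the $k$-fold blow-up of $\overline{M}$ and $\Sigma'$ is the proper transform; since we blow up interior points the boundary is unchanged, so $(K_{\overline{M}'}+\Sigma')^{2}=(K_{\overline{M}}+\Sigma)^{2}-k$. Using Proposition \ref{gaussbonnet1} I would express $\chi(M',g')$ and $\sigma(M',g')$ in terms of $\chi(\overline{M}')$, $\chi(\Sigma')$, $\sigma(\overline{M}')$ and $\Sigma'^{2}$, noting that the cusp contributions are purely topological and the blow-up changes $\chi(\overline{M})$ by $+k$ and $\sigma(\overline{M})$ by $-k$. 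The upshot is that the finite-volume characteristic numbers $\chi(M',g')$ and $\sigma(M',g')$ are controlled exactly as in the closed case.

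\medskip
Next I would invoke the Einstein condition. For a finite-volume Einstein $4$-manifold with Poincar\'e asymptotics, the Gauss-Bonnet and signature integrands give $2\chi(M',g')\pm 3\sigma(M',g')\geq \frac{1}{C}\int_{M'} s^{2}\,d\mu$ for an explicit universal constant, exactly because in the Einstein case the trace-free Ricci tensor vanishes and the Euler/signature forms reduce to curvature expressions dominated by the scalar curvature and Weyl terms. Combined with Theorem \ref{minima} applied to the log-general type pair $(\overline{M}',\Sigma')$, which gives $\frac{1}{32\pi^{2}}\int_{M'}s^{2}\,d\mu\geq (K_{\overline{M}'}+\Sigma')^{2}=(K_{\overline{M}}+\Sigma)^{2}-k$, I obtain a topological inequality of the form
\begin{align}\notag
2\chi(\overline{M})+3\sigma(\overline{M})-\chi(\Sigma)-\sigma\text{-corrections}\;\geq\;\tfrac{2}{3}\big((K_{\overline{M}}+\Sigma)^{2}-k\big)+k.
\end{align}
Unwinding the characteristic numbers via Noether's formula, the left side equals $(K_{\overline{M}}+\Sigma)^{2}$ up to the cusp terms, so the inequality collapses to $k\leq \frac{2}{3}(K_{\overline{M}}+\Sigma)^{2}$, contradicting the hypothesis $k\geq\frac{2}{3}(K_{\overline{M}}+\Sigma)^{2}$ (with care about the boundary/equality case).

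\medskip
The main obstacle I anticipate is establishing the Einstein Gauss-Bonnet inequality in the \emph{finite-volume, non-compact} setting: in the closed case this is a pointwise algebraic fact about curvature, but here one must verify that the curvature integrals are genuinely convergent and that no boundary contribution at the cusps spoils the inequality. This is precisely where Proposition \ref{gaussbonnet1} (or the cited result of Stern) does the heavy lifting, guaranteeing that the curvature integrals are finite and equal to the corrected topological quantities. A secondary subtlety is checking that $(\overline{M}',\Sigma')$ remains a minimal pair of logarithmic general type so that Theorem \ref{minima} applies; since we blow up interior points away from $\Sigma$ and $(K_{\overline{M}}+\Sigma)$ is big, the log-general type property and the positivity $(c_1^{+}(\mathcal{L}'))^{2}=(K_{\overline{M}'}+\Sigma')^{2}>0$ persist as long as $k<(K_{\overline{M}}+\Sigma)^{2}$, which is implied by the hypothesis.
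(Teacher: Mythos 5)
Your overall frame --- Einstein Gauss--Bonnet played against a Seiberg--Witten scalar-curvature bound via Proposition \ref{gaussbonnet1} --- is indeed the paper's strategy, but the Seiberg--Witten input you choose is too weak, and the algebra you claim from it is wrong. You apply Theorem \ref{minima} to the blown-up pair, getting
\begin{align}\notag
\frac{1}{32\pi^{2}}\int_{M'}s^{2}\,d\mu_{g}\geq (K_{\overline{M}}+\Sigma)^{2}-k.
\end{align}
Since $2\chi(M',g)+3\sigma(M',g)=(K_{\overline{M}}+\Sigma)^{2}-k$ and the Einstein condition gives $2\chi+3\sigma\geq\frac{1}{96\pi^{2}}\int s^{2}\,d\mu$, your bound only yields
\begin{align}\notag
(K_{\overline{M}}+\Sigma)^{2}-k\geq\tfrac{1}{3}\bigl[(K_{\overline{M}}+\Sigma)^{2}-k\bigr],
\end{align}
which collapses to $k\leq(K_{\overline{M}}+\Sigma)^{2}$, \emph{not} to $k\leq\frac{2}{3}(K_{\overline{M}}+\Sigma)^{2}$ as you assert: both sides of your inequality degrade by $k$ simultaneously, so nothing beyond the trivial threshold survives, and your argument proves a strictly weaker theorem. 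The idea you are missing (LeBrun's, and the paper's) is that the scalar-curvature lower bound must \emph{not} be allowed to drop under blow-up. The paper invokes Friedman--Morgan \cite{Morgan3}: $\overline{M}\sharp k\overline{\setC P^{2}}$ carries $2^{k}$ \emph{$Spin^{c}$} structures with determinants $L=K^{-1}_{\overline{M}}\pm E_{1}\pm\cdots\pm E_{k}$, each admitting irreducible solutions for every metric. Choosing the signs so that each $\pm E^{+}_{i}$ pairs non-negatively with $c_{1}(\overline{M})^{+}$, and using that self-dual parts have non-negative square, one gets $(c_{1}(L)^{+})^{2}\geq(c_{1}(\overline{M})^{+})^{2}$; then Theorem \ref{convergence} together with Theorem \ref{bochner} gives $\frac{1}{32\pi^{2}}\int_{M'}s^{2}\,d\mu\geq(K_{\overline{M}}+\Sigma)^{2}$ with no loss of $k$. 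Only with this un-decremented bound does the Gauss--Bonnet computation produce $(K_{\overline{M}}+\Sigma)^{2}-k\geq\frac{1}{3}(K_{\overline{M}}+\Sigma)^{2}$, i.e.\ the stated threshold $k\leq\frac{2}{3}(K_{\overline{M}}+\Sigma)^{2}$.

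There is a second, independent gap: Theorem \ref{minima} cannot be applied to $(\overline{M}',\Sigma')$ in the first place, because its hypotheses require a \emph{minimal} pair, and blowing up interior points destroys minimality --- each exceptional curve $E_{i}$ is a $(-1)$-curve with $E_{i}\cdot\Sigma'=0\leq 1$. Your claim that minimality ``persists as long as $k<(K_{\overline{M}}+\Sigma)^{2}$'' conflates minimality with bigness of the log-canonical class; these are unrelated, and the former manifestly fails on any blow-up. The paper never applies Theorem \ref{minima} on the blow-up: it runs the existence theorem (Theorem \ref{convergence}) directly for the non-standard \emph{$Spin^{c}$} structures supplied by Friedman--Morgan and then applies the curvature estimate of Theorem \ref{bochner}.
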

\begin{proof}
By a result of Morgan-Friedman \cite{Morgan3}, we know that the
manifold $\overline{M}\sharp k\overline{\setC P^{2}}$ admits at
least $2^{k}$ different \emph{$Spin^{c}$} structures with
determinant line bundles
\begin{align}\notag
L=K^{-1}_{\overline{M}}\pm E_{1}\pm...\pm E_{k}
\end{align}
for which the SW equations have irreducible solutions for each
metric. Since
\begin{align}\notag
(c_{1}(L)^{+})^{2}&=(c_{1}(\overline{M})^{+}\pm E^{+}_{1}\pm...\pm
E^{+}_{k})^{2}\\ \notag
&=(c_{1}(\overline{M})^{+})^{2}+2\sum_{i}c_{i}(\overline{M})^{+}\cdot\pm
E^{+}_{i}+(\sum_{i}\pm E^{+}_{i})^{2}
\end{align}
we can chose a \emph{$Spin^{c}$} structure whose determinant line
bundle satisfies
\begin{align}\notag
(c_{1}(L)^{+})^{2}\geq (c_{1}(\overline{M})^{+})^{2}\geq
c_{1}(\overline{M})^{2}=c^{2}_{1}(\overline{M}).
\end{align}
We can now apply Theorem \ref{convergence} for any of the
\emph{$Spin^{c}$} structure above and with respect to the metric $g$
on $M^{'}$. We then construct $2^{k}$ irreducible solutions
$(A,\psi)\in L^{2}_{1}(M^{'},g)$, where $A=C+a$ with $C$ a fixed
smooth connection on $L\otimes\mathcal{O}(-\Sigma)$ and $a\in
L^{2}_{1}(\Omega^{1}_{g}(M^{'}))$. By appropriately choosing the
\emph{$Spin^{c}$} structure and using Theorem \ref{bochner} we
compute
\begin{align}\notag
\frac{1}{32\pi^{2}}\int_{M^{'}}s^{2}d\mu_{g}&\geq
(c_{1}(L\otimes\mathcal{O}(-\Sigma))^{+})^{2}
\\ \notag &
\geq (K_{\overline{M}}+\Sigma)^{2}.
\end{align}
By Proposition \ref{gaussbonnet1} one has
\begin{align}\notag
&\chi(M^{'},g)=\chi(\overline{M})-\chi(\Sigma)+k\\ \notag
&\sigma(M^{'},g)=\sigma(\overline{M})-\frac{1}{3}\Sigma^{2}-k.
\end{align}
Since $\chi(\Sigma)=-K_{\overline{M}}\cdot\Sigma-\Sigma^{2}$, if we
assume $g$ to be Einstein
\begin{align}\notag
(K_{\overline{M}}+\Sigma)^{2}-k&=2\chi(M^{'})+3\sigma(M^{'})
\\ \notag &=\frac{1}{4\pi^{2}}\int_{M^{'}}2|W_{+}|^{2}+\frac{s^{2}}{24}d\mu_{g}\\
\notag &\geq\frac{1}{96\pi^{2}}\int_{M^{'}}s^{2}d\mu_{g}\\
\notag &\geq\frac{1}{3}(K_{\overline{M}}+\Sigma)^{2}
\end{align}
so that
\begin{align}\notag
\frac{2}{3}(K_{\overline{M}}+\Sigma)^{2}\geq k.
\end{align}
In other words if
\begin{align}\notag
k>\frac{2}{3}(K_{\overline{M}}+\Sigma)^{2}
\end{align}
we cannot have a Poincar\'e type Einstein metric on $M\sharp
k\overline{\setC P^{2}}$. The equality case can also be included and
the proof goes as in the compact case. For more details, see
\cite{LeBrun5}. The proof is then complete.
\end{proof}

\noindent\textbf{Acknowledgements}. I would like to thank Professor
Claude LeBrun for several enlightening math discussions during my
years st Stony Brook. Moreover, I would like to thank Professor Mark
Stern for explaining to me the proof of Theorem \ref{Zucker} and for
informing me about the results contained in \cite{Stern}. I also
would like to thank Professor Mark Stern for constructive comments
on the paper.

\address{Mathematics Department, Duke University, Box 90320, Durham, NC 27708,
USA}

\emph{E-mail address}: \email{luca@math.duke.edu}

\end{document}